\newtheorem{theorem}{Theorem}[section]
\newtheorem{lemma}[theorem]{Lemma}
\newtheorem{corollary}[theorem]{Corollary}
\newtheorem{remark}[theorem]{Remark}
\newtheorem{proposition}[theorem]{Proposition}
\theoremstyle{definition}
\newtheorem{definition}{Definition}[section]
\newtheorem{assumption}{Assumption}
\numberwithin{equation}{section}
\begin{document}
\begin{center}
\large{\textbf{Adaptive Supremum Norm Posterior Contraction: Wavelet Spike-and-Slab and Anisotropic Besov Spaces}}
\end{center}

\begin{center}
\textbf{William Weimin Yoo$^1$, Vincent Rivoirard$^2$ and Judith Rousseau$^3$}\\
\textit{Leiden University$^1$, Universit\'{e} Paris Dauphine$^2$, Oxford University$^3$}
\end{center}

\begin{abstract}
Supremum norm loss is intuitively more meaningful to quantify function estimation error in statistics. In the context of multivariate nonparametric regression with unknown error, we propose a Bayesian procedure based on spike-and-slab prior and wavelet projections to estimate the regression function and all its mixed partial derivatives. We show that their posterior distributions contract to the truth optimally and adaptively under supremum-norm loss. The master theorem through tests with exponential errors used in Bayesian nonparametrics was not adequate to deal with this problem, and we developed a new idea such that posterior under the regression model is systematically reduced to a posterior arising from some quasi-white noise model, where the latter model greatly simplifies our rate calculations. Hence, this paper takes the first step in showing explicitly how one can translate results from white noise to regression model in a Bayesian setting.
\end{abstract}

\noindent\textbf{Keywords:} Supremum norm, Adaptive posterior contraction, Nonparametric regression, Spike-and-Slab, Wavelet tensor products, Anisotropic Besov space, Type II error.\\

\noindent\textbf{MSC2010 classifications:} Primary 62G08; secondary 62G05, 62G10, 62G20

\section{Introduction}
Consider the nonparametric multivariate regression model
\begin{align}\label{eq:model}
Y_i=f(\boldsymbol{X}_i)+\varepsilon_i,\qquad i=1,\dotsc,n,
\end{align}
where $Y_i$ is a response variable, $\boldsymbol{X}_i$ is covariate, and $\varepsilon_1,\dotsc,\varepsilon_n$ are independent and identically distributed (i.i.d.) as $\mathrm{N}(0,\sigma^2)$ with unknown $0<\sigma<\infty$. Each $\boldsymbol{X}_i$ takes values in some rectangular region in $\mathbbm{R}^d$, which is assumed to be $[0,1]^d$ without loss of generality. The covariates can be deterministic or are sampled from a uniform distribution on $[0,1]^d$ independent of $\varepsilon_i$. There is some freedom in choosing the locations of the fixed covariates, as long as its empirical distribution can be approximated by a uniform distribution with an error of at most $n^{-1}$.

Suppose we observe $(Y_i,\boldsymbol{X}_i),i=1,\dotsc,n$, then our main problem is to recover or estimate the unknown $f$ and its mixed partial derivatives. In the literature, recovery is performed by minimizing certain loss functions, with the $L_2$ or integrated mean square error being the most common. However, other choices of loss, especially the supremum norm or $L_\infty$ is also of interest. Unlike the $L_2$-loss, the $L_\infty$-loss is intuitively more meaningful and hence a more natural distance to use to quantify the ``difference" between two functions. Moreover, $L_\infty$-distance is used to construct simultaneous credible bands, which are visually more interpretable in one dimension than $L_2$-credible sets. Also, it can be used to solve other problems such as function mode estimation discussed in Yoo and Ghosal \cite{yoo2017}.

Adaptive $L_2$-posterior contraction is a well-studied topic in Bayesian nonparametrics, where optimal procedures have been proposed for white noise models, inverse problems, nonparametric regression and density estimation (see Belitser and Ghosal \citep{belitser2003}, Ray \cite{ray2013}, Shen and Ghosal \cite{ShenandGhosal:DensityRegression}, van der Vaart and van Zanten \cite{vaart2009}). Results on $L_\infty$-contraction are much more limited. In the non-adaptive case, Gin\'{e} and Nickl \citep{nickl2011} studied contraction rates in $L_r$-metric, $1\leq r\leq\infty$, and obtained optimal rate using conjugacy for the Gaussian white noise model, and a rate for density estimation based on random wavelet series and Dirichlet process mixture, by using a testing approach based on concentration inequalities. In the same context, Castillo \citep{castillosupnorm} introduced techniques based on semiparametric Bernstein-von Misses theorems to obtain optimal $L_\infty$-contraction rates. Scricciolo \citep{scricciolo2014} applied the techniques of Gin\'{e} and Nickl \citep{nickl2011} to obtain $L_\infty$-rates using Gaussian kernel mixtures prior for analytic true densities. Using B-splines tensor product with Gaussian coefficients and by conjugacy arguments, Yoo and Ghosal \citep{yoo2015} established optimal $L_\infty$-posterior contraction rates for estimating multivariate regression function and its mixed partial derivatives.

To the best of our knowledge, there are only two papers on the adaptive case. In Hoffmann et al.~\citep{adaptsupnorm}, the authors established optimal $L_\infty$-contraction rate for the Gaussian white noise model and gave an existential result for density estimation; while in Chapter 3 of the thesis by Sniekers \citep{sniekers}, a near optimal rate is given implicitly through a result on credible bands for regression models. For models beyond the white noise, the first aforementioned paper used an abstract sieve prior construction to prove the existence of a Bayesian procedure, which is not readily implementable in practice; while the latter paper, which is based on a scaled Brownian motion prior, can only adapt up to H\"{o}lder smoothness of order two.

In this paper, we study a concrete hierarchical Bayesian method to estimate $f$ and its mixed partial derivatives adaptively under the $L_\infty$-loss for nonparametric multivariate regression models as in \eqref{eq:model}. We first represent $f$ as a finite combination of tensor product wavelet bases, and endow the basis coefficients with a spike-and-slab prior. We further endow the error variance $\sigma^2$ with a continuous prior density with support on $(0,\infty)$. Spike-and-slab is one of the most widely used prior in Bayesian statistics, particularly in connection to model selection (cf.~Mitchell and Beauchamp \cite{mitchell1988}, George and McCulloch \cite{george1993}, Ishwaran and Rao \cite{ishwaran2005}) and high-dimensional regression (cf.~Li and Zhang \cite{fan2010}, Castillo and van der Vaart \cite{castillo2012}, Castillo et al.~\cite{castillo2015}). When used together with wavelet bases, it can denoise noisy signals and compress images (see Chipman et al.~\citep{chipman1997}, Abramovich and Silverman \cite{abramovich1998}, Johnstone and Silverman \cite{johnstone2005}).

In the literature, an enormous amount of theoretical investigation has been devoted to the Gaussian white noise model, with only vague references to Le Cam's asymptotic equivalence theory claiming that results in this setting translate to more practical statistical problems such as regression considered in this paper. However in the present Bayesian setting, it is unclear whether such results still hold, in view of the fact that our design points are not exactly discrete uniform, but are close to uniform up to some error. In particular, we are not aware of any master theorem that makes this translation explicit in Bayesian asymptotics.

In this paper (Section \ref{sec:proof2}), we show how one can make the idea of asymptotic equivalence explicit in Bayeisan posterior rate computations. In our approach, we bound the posterior under the regression model with a posterior arising from a quasi-white noise model, with ``quasi" refers to the use of a scaling based on the wavelet basis Gram matrix rather than the standard $n^{-1/2}$ in white noise models. In a series of steps and intersecting with appropriately chosen events, this is achieved by reducing the regression likelihood to a likelihood that resembles and retains the component-wise structure of a white noise model, where the latter likelihood structure greatly simplifies our calculations and thus giving our method a certain Bayesian  ``asymptotic equivalence" flavor. We applied this new technique to establish optimal $L_\infty$-posterior contraction rates for the regression model of \eqref{eq:model} under the spike-and-slab tensor product wavelet prior, by reducing its posterior distribution to a quasi-white noise counterpart. Once in this simpler setup, we can then adapt proof techniques of Hoffmann et al.~\citep{adaptsupnorm}, which is based on the standard white noise model to prove our main result and translate them back to the more practical regression setting.

Our main result shows that spike-and-slab priors with appropriate weights can estimate $f$ and all its mixed partial derivatives optimally and adaptively under $L_\infty$-loss, in the sense that the resulting sup-norm posterior contraction rates match with the minimax rates for this problem. The scope of our result is quite general, in that we require only the slab prior density to be bounded from above and bounded from below on some interval, and this encompasses (nonconjugate) distributions such as Gaussian, sub-Gaussian, Laplace, uniform and most $t$-distributions. The Gaussian assumption of our errors in \eqref{eq:model} is simply a working model to derive expressions for the posterior, and our results will hold even if the model is misspecified and the actual data generation mechanism is sub-Gaussian.

The main challenge of this new approach is the handling of discretization inherent in regression models, as many convenient wavelet properties are lost when working in the discrete domain with finite data. As an example, the wavelet basis matrix constructed from wavelets evaluated at the design points is not orthogonal and this complicates analysis. We solve this problem by approximating discrete quantities or sums by its continuous or integral versions, and thus incurring approximation errors that we propagate throughout our calculations, while at the same time keeping them under control so as not to overwhelm stochastic and truncation errors (bias) in other parts of the problem. Another generalization we considered is to allow $f_0$ to be anisotropic, i.e., different smoothness in different dimensions, and we introduce a version of the anisotropic Besov space (see Definition \ref{definition:holder} below) suited for our analysis and we assume that the true regression function $f_0$ belongs to this space.

One might question the need of this new approach in Bayesian nonparametrics, as there is a state-of-the-art technique in the form of a master theorem to establish posterior contraction rates (cf.~Ghosal et al.~\citep{ghosal2000}, Shen and Wasserman \cite{shen2001}, Ghosal and van der Vaart \cite{converge2007}). One of the main criterion of this theorem is the existence of tests for the hypotheses $H_0:f=f_0$ against $H_1:f\in\{f:\|f-f_0\|_\infty>M\epsilon_n\}$ that have Type I error approaching zero and exponentially decreasing Type II error, where $\epsilon_n$ is the minimax rate for the regression problem under consideration. However, we show that this is impossible to achieve in general for sup-norm alternatives, as any such test has Type II error decreasing at least polynomially in $n$. For exponential error test to exists, we show that the null and the alternative hypotheses must be further separated in $L_\infty$-norm, and this increase in separation results in the contraction rate being inflated by the same (polynomial) factor (see also Hoffmann et al.~\citep{adaptsupnorm} for a related discussion on this sub-optimality issue). The proposed approach circumvents this problem and gives optimal rates.


The paper is organized as follows. The next section introduces notations. Section \ref{sec:prior} describes the prior and the assumptions used in this paper. The main result on adaptive $L_\infty$-posterior contraction, for $f$ and its mixed partial derivatives are presented in Section \ref{sec:result}, and this is followed by a discussion on the lower limit of adaptation. The inadequacy of the master theorem is detailed in Section \ref{sec:test}. Section \ref{sec:proof} contains proofs of all main results and is further divided into three subsections. The proof of our $L_\infty$-contraction result is in Section \ref{sec:proof1}, we introduce our posterior bounding technique in Section \ref{sec:proof2} and the rest of the proofs are gathered in Section \ref{sec:testproof}. The last Section \ref{sec:proof3} contains technical lemmas used throughout the proofs, where some results such as continuous approximations to discrete objects and $L_2$-contraction for spike-and-slab priors are of independent interests.

\section{Notations}
Given two numerical sequences $a_n$ and $b_n$, $a_n=O(b_n)$ or $a_n\lesssim b_n$ means $a_n/b_n$ is bounded, while $a_n=o(b_n)$ of $a_n\ll b_n$ means $a_n/b_n\rightarrow0$. If $a_n\asymp b_n$, then we have $a_n=O(b_n)$ and $b_n=O(a_n)$. For stochastic sequence $Z_n$, $Z_n=O_P(a_n)$ means $Z_n/a_n$ is bounded in $P$-probability, while $Z_n=o_P(a_n)$ means $Z_n/a_n$ converges to $0$ in $P$-probability. Define $\mathbb{N}=\{1,2,\dotsc\}$ to be the set of natural numbers and $\mathbb{N}_0=\mathbb{N}\cup\{0\}$.

Define $\|\boldsymbol{x}\|_p=(\sum_{k=1}^d |x_k|^p)^{1/p}$, $1\le p<\infty$, $\|\boldsymbol{x}\|_\infty=\max_{1\leq k\leq d}|x_k|$ and write $\|\boldsymbol{x}\|$ for $\|\boldsymbol{x}\|_2$ the Euclidean norm. For $f:U\rightarrow\mathbb{R}$ on some bounded set $U\subseteq\mathbb{R}^d$ with interior points, let $\|f\|_p$ be the $L_p$-norm, and $\|f\|_\infty=\sup_{x\in U}|f(x)|$. For $\boldsymbol{r}=(r_1,\dotsc,r_d)^T\in\mathbb{N}_0^d$ and $|\boldsymbol{r}|:=\sum_{k=1}^dr_k$, let $D^{\boldsymbol{r}}$ be the partial derivative operator $\partial^{|\boldsymbol{r}|}/\partial x_1^{r_1}\cdots\partial x_d^{r_d}$.  For a set $\mathcal{A}$, let $\mathbbm{1}_{\mathcal{A}}$ be the indicator function on $\mathcal{A}$. For a vector $\boldsymbol{x}$, we write $x_{\boldsymbol{j}}$ to be its $\boldsymbol{j}$th component with $\boldsymbol{j}$ possibly be multi-index $(j_1,\dotsc,j_d)^T$, and in that case we let the entries be ordered lexicographically.

\section{Wavelet series with spike-and-slab prior}\label{sec:prior}
Since our domain of interest is bounded i.e., $[0,1]^d$, we will use the boundary corrected wavelets introduced by Cohen-Daubechies-Vial (CDV) in Section 4 of \citep{waveletb}. At each dimension $l=1,\dotsc,d$, the CDV wavelets are constructed from the usual Daubechies wavelet system on $\mathbb{R}$, by retaining wavelets supported in the interior of $[0,1]$ and replacing the wavelets near $\{0,1\}$ with boundary corrected versions, such that the entire system still generates a multiresolution analysis on $[0,1]$ and is orthonormal. For some $N_l$ to be chosen below, we write the system's father and mother wavelets as $\varphi_{N_l,m_l}(x)=2^{N_l/2}\varphi_{m_l}(2^{N_l}x)$ and $\psi_{j_l,k_l}(x)=2^{j_l/2}\psi_{k_l}(2^{j_l}x)$ where $0\leq m_l\leq2^{N_l}-1,j_l\geq N_l$ and $0\leq k_l\leq2^{j_l}-1$. For the interior wavelets, $\varphi_{m_l}(x)=\varphi(x-m_l)$ and $\psi_{k_l}(x)=\psi(x-k_l)$ are the translated original Daubechies system and for the boundary wavelets, $\varphi_{m_l}$ and $\psi_{k_l}$ are some linear combinations of this system. We take the CDV wavelets to be $\eta$-regular at each direction (see Definition 4.2.14 of \citep{nickl2016}) such that the derivatives $\varphi_{m_l}^{(r_l)},\psi_{k_l}^{(r_l)}$ are uniformly bounded for $r_l<\eta+1$.

For $\boldsymbol{x}=(x_1,\dotsc,x_d)$, we construct tensor products of the CDV father and mother wavelets as $\varphi_{\boldsymbol{N},\boldsymbol{m}}(\boldsymbol{x})=\prod_{l=1}^d\varphi_{N_l,m_l}(x_l)$ and $\psi_{\boldsymbol{j},\boldsymbol{k}}(\boldsymbol{x})=\prod_{l=1}^d\psi_{j_l,k_l}(x_l)$ respectively, where $\boldsymbol{m}=(m_1,\dotsc,m_d), \boldsymbol{N}=(N_1,\dotsc,N_d),\boldsymbol{j}=(j_1,\dotsc,j_d)$ and $\boldsymbol{k}=(k_1,\dotsc,k_d)$. Since the CDV wavelets are unconditional $L_2$-bases, we can expand $f$ in the multivariate regression model of \eqref{eq:model} using these bases, and this leads us to consider the following hierarchical priors to study sup-norm posterior contraction:
\begin{gather}
f(\boldsymbol{x})=\sum_{m_1=0}^{2^{N_1}-1}\dotsc\sum_{m_d=0}^{2^{N_d}-1}\vartheta_{\boldsymbol{m}}\varphi_{\boldsymbol{N},\boldsymbol{m}}(\boldsymbol{x})
+\sum_{j_1=N_1}^{J_{n,1}-1}\sum_{k_1=0}^{2^{j_1}-1}\cdots\sum_{j_d=N_d}^{J_{n,d}-1}\sum_{k_d=0}^{2^{j_d}-1}\theta_{\boldsymbol{j},\boldsymbol{k}}\psi_{\boldsymbol{j},\boldsymbol{k}}(\boldsymbol{x}),\nonumber\\
\vartheta_{\boldsymbol{m}}\overset{i.i.d.}{\sim}p(\cdot)\nonumber\\
\theta_{\boldsymbol{j},\boldsymbol{k}}\overset{i.i.d.}{\sim}(1-\omega_{j_1,\dotsc j_d,n})\delta_0(\cdot)+\omega_{j_1,\dotsc,j_d,n}p(\cdot),\nonumber\\
\sigma\sim\pi_{\sigma}.\label{eq:prior}
\end{gather}
The prior on the mother coefficient is called a spike-and-slab, with the spike part corresponding to the point mass at $0$ ($\delta_0$ is the Dirac function) and the slab part some density $p(\cdot)$ on $\mathbb{R}$. With appropriate chosen weights $\omega_{j_1,\dotsc,j_d}$, it does a form of model selection by zeroing ``unimportant" coefficients. Observe that we only assign spike-and-slab priors on the mother wavelet coefficients, this is done to prevent overly sparse models by allowing father coefficients to capture global structures of $f$. The truncation point $J_{n,l}$ at some fixed $l=1,\dotsc,d$ is a sequence of positive integers increasing with $n$, such that $\prod_{l=1}^d2^{J_{n,l}}=\sqrt{n/\log{n}}$ for both fixed and random design points, where the division by $\log{n}$ is a technical requirement. The presence of a square root here is due to the method of our proof, and it relates to the fact that we can only reduce a regression likelihood to a corresponding white noise version, when there is a lower limit imposed on the true function smoothness that we can adapt to (see Section \ref{sec:lower} for more details). Therefore for true functions that are sufficiently smooth, our theory suggests that it suffices to take $\sqrt{n/\log{n}}$ for the regression model as opposed to $n$ used in \citep{adaptsupnorm} for the white noise model, and hence our lower truncation point speeds up wavelet computations by reducing the size of the candidate coefficients. Here $N_l$ is a positive integer such that $2^{N_l}\geq2\eta$. Also, we assume that the priors on $\{\vartheta_{\boldsymbol{m}}\}, \{\theta_{\boldsymbol{j},\boldsymbol{k}}\}, \sigma^2$ are mutually independent with each other. For the spike-and-slab weights, we let $n^{-\lambda}\leq\omega_{j_1,\dotsc,j_d,n}\leq\min\{2^{-\sum_{l=1}^dj_l(1+\mu_l)},1/2\}$ for some $\lambda>0$ and $\mu_l>1/2,l=1,\dotsc,d$. Here, $p(\cdot)$ is such that $p_{\mathrm{max}}=\sup_{x\in\mathbb{R}}p(x)<\infty$ and for some $R_0>0$,
\begin{align}\label{eq:density}
\inf_{x\in[-R_0,R_0]}p(x)=p_{\mathrm{min}}>0.
\end{align}
Examples of $p(\cdot)$ include the Gaussian, sub-Gaussian, Laplace, the uniform $[-R_0,R_0]$, $t$-distributions and most commonly used parametric families. We let $\pi_{\sigma}$ be a positive and continuous prior density with support on $(0,\infty)$, e.g., inverse gamma distribution.

If the covariates $\boldsymbol{X}_i=(X_{i1},\dotsc,X_{id})^T$ for $i=1,\dotsc,n$ are fixed, we assume they are chosen such that
\begin{align}\label{eq:cdf}
\sup_{\boldsymbol{x}\in[0,1]^d}|G_n(\boldsymbol{x})-U(\boldsymbol{x})|=O\left(\frac{1}{n}\right),
\end{align}
where $U(\boldsymbol{x})$ is the cumulative distribution function of a uniform on $[0,1]^d$, and $G_n(\boldsymbol{x})$ is the empirical cumulative distribution function of $\{\boldsymbol{X}_i,i=1,\dotsc,n\}$, that is, $G_n(\boldsymbol{x})=n^{-1}\sum_{i=1}^n\mathbbm{1}_{\prod_{l=1}^d[0,X_{il}]}(\boldsymbol{x})$. This requirement can be fulfilled if we used a discrete uniform design, that is for $n=m^d$ for some $m\in\mathbb{N}$, $\boldsymbol{X}_i\in\{(j-1)/(m-1):j=1,\dotsc,m\}^d$ with $i=1,\dotsc,n$. We will mainly discuss and prove results based on fixed design points, and we make brief remarks concerning the random case.

\begin{remark}
To be technically precise, there should be indices to indicate the fact that combinations of both father and mother wavelet components are used to construct $\psi_{\boldsymbol{j},\boldsymbol{k}}$. In particular, let $\mathcal{I}$ be the set of $2^d-1$ sequences of the form $(i_1,\dotsc,i_d)$, such that each $i_l$ can be $0$ or $1$, but excluding the case where $i_l=0$ for all $l$. Then $\psi_{\boldsymbol{j},\boldsymbol{k}}$ is augmented to $\psi_{\boldsymbol{j},\boldsymbol{k}}^{\boldsymbol{i}}=\prod_{l=1}^d\psi_{j_l,k_l}^{i_l},\boldsymbol{i}\in\mathcal{I}$ such that $\psi_{j_l,k_l}^0=\varphi_{N_l,m_l}$ and $\psi_{j_l,k_l}^1=\psi_{j_l,k_l}$. However, since $\boldsymbol{i}\in\mathcal{I}$ are simply identification indices to dictate which tensor product component is a father or mother wavelet, and coupled with the fact that $\sum_{\boldsymbol{i}\in\mathcal{I}}=2^d-1$ does not grow with $n$, we drop this identification in this paper and simply work with $\psi_{\boldsymbol{j},\boldsymbol{k}}$. This is to improve readability and help readers focus on the main ideas instead of the technicalities of working in multiple dimensions.
\end{remark}

To study $L_\infty$-posterior contraction for mixed partial derivatives, we apply the differential operator $D^{\boldsymbol{r}}$ on both sides of the wavelet expansion in \eqref{eq:prior} to yield
\begin{align*}
D^{\boldsymbol{r}}f=\sum_{\boldsymbol{m}}\vartheta_{\boldsymbol{m}}D^{\boldsymbol{r}}\varphi_{\boldsymbol{N},\boldsymbol{m}}+
\sum_{\boldsymbol{j},\boldsymbol{k}}\theta_{\boldsymbol{j},\boldsymbol{k}}D^{\boldsymbol{r}}\psi_{\boldsymbol{j},\boldsymbol{k}},
\end{align*}
where the priors on $\vartheta_{\boldsymbol{m}}$ and $\theta_{\boldsymbol{j},\boldsymbol{k}}$ are the same as in \eqref{eq:prior}. To study both $f$ and its derivatives in the same framework, we adopt the convention $D^{\boldsymbol{0}}f\equiv f$. We note that objects such as $D^{\boldsymbol{r}}\varphi_{\boldsymbol{N},\boldsymbol{m}}$ and $D^{\boldsymbol{r}}\psi_{\boldsymbol{j},\boldsymbol{k}}$ are called vaguelet tensor products (see \citep{cai2002}).

To study frequentist properties and derive contraction rates for our posterior, we assume the existence of an underlying true function $f_0$, such that $f_0$ belongs to an anisotropic Besov space as defined below. Let us first denote $\alpha^{*}$ to be the harmonic mean of $\boldsymbol{\alpha}=(\alpha_1,\dotsc,\alpha_d)^T$, i.e., $(\alpha^{*})^{-1}=d^{-1}\sum_{l=1}^d\alpha_l^{-1}$.

\begin{definition}[Anisotropic Besov space]\label{definition:holder}
The anisotropic Besov function space $\mathcal{B}^{\boldsymbol{\alpha}}_{p,q}$ for $\boldsymbol{\alpha}=(\alpha_1,\dotsc,\alpha_d)^T$ such that $0<\alpha_l<\eta+1,l=1,\dotsc,d$ and $1\leq p,q\leq\infty$ is given as
\begin{align}
\mathcal{B}^{\boldsymbol{\alpha}}_{p,q}\equiv\begin{cases}\{f\in L_p([0,1]^d):\|f\|_{\mathcal{B}^{\boldsymbol{\alpha}}_{p,q}}<\infty\},&1\leq p<\infty,\\
\{f\in C_u([0,1]^d):\|f\|_{\mathcal{B}^{\boldsymbol{\alpha}}_{p,q}}<\infty\},&p=\infty\end{cases}
\end{align}
with $C_u([0,1]^d)$ the space of uniformly continuous functions on $[0,1]^d$, and the anisotropic Besov norm $\|f\|_{\mathcal{B}^{\boldsymbol{\alpha}}_{p,q}}$ is $\left(\sum_{\boldsymbol{m}}|\langle f, \varphi_{\boldsymbol{N},\boldsymbol{m}}\rangle|^p\right)^{\frac{1}{p}}$ plus
\begin{align}\label{eq:besov}
\begin{cases}
\left[\sum\limits_{\boldsymbol{j}}2^{q\sum_{l=1}^d\alpha_lj_l\left(\frac{1}{d}+\frac{1}{2\alpha^{*}}-\frac{1}{\alpha^{*}p}\right)}\left(\sum\limits_{\boldsymbol{k}}|\langle f,
\psi_{\boldsymbol{j},\boldsymbol{k}}\rangle|^p\right)^{\frac{q}{p}}\right]^{\frac{1}{q}},&1\leq q<\infty,\\
\sup\limits_{\boldsymbol{j}}2^{\sum_{l=1}^d\alpha_lj_l\left(\frac{1}{d}+\frac{1}{2\alpha^{*}}-\frac{1}{\alpha^{*}p}\right)}\left(\sum\limits_{\boldsymbol{k}}|\langle f,
\psi_{\boldsymbol{j},\boldsymbol{k}}\rangle|^p\right)^{\frac{1}{p}},&q=\infty,\end{cases}
\end{align}
where we replace the $\ell_p$-sequence norm with the $\|\cdot\|_\infty$-norm when $p=\infty$.
\begin{remark}\label{rem:b0}
If we set $\alpha_l=\alpha$ and take $\alpha\rightarrow0$, then we can define the Besov spaces $\mathcal{B}^{\boldsymbol{0}}_{p,q}$, which is the multivariate and anisotropic generalization of its univariate counterpart discussed in Section 4.3.2 of \citep{nickl2016}. In this case, we replace $\sum_{l=1}^d\alpha_lj_l(\frac{1}{d}+\frac{1}{2\alpha^{*}}-\frac{1}{\alpha^{*}p})$ in the exponent by $\sum_{l=1}^dj_l/2$.
\end{remark}

Let $K_{\boldsymbol{W}}(\boldsymbol{x},\boldsymbol{y})=\sum_{\boldsymbol{m}}\varphi_{\boldsymbol{W},\boldsymbol{m}}(\boldsymbol{x})\varphi_{\boldsymbol{W},\boldsymbol{m}}(\boldsymbol{y})$, and define the operator $K_{\boldsymbol{W}}$ on $L_p$ such that $K_{\boldsymbol{W}}(g)(\boldsymbol{x})=\int K_{\boldsymbol{W}}(\boldsymbol{x},\boldsymbol{y})g(\boldsymbol{y})d\boldsymbol{y}$ for $g\in L_p$. Thus, observe that $K_{\boldsymbol{W}}(g)$ is the $L_2$-projection of $g$ onto the subspace spanned by $\{\varphi_{N,\boldsymbol{m}}:0\leq m_l\leq2^{N_l}-1\}\cup\{\psi_{\boldsymbol{j},\boldsymbol{k}}:N_l\leq j_l\leq W_l-1,0\leq k_l\leq2^{j_l}-1\},l=1,\dotsc,d$. That is, $K_{\boldsymbol{W}}(g)$ has wavelet expansion as in \eqref{eq:prior} but truncated at levels $\boldsymbol{W}=(W_1,\dotsc,W_d)^T$. The proposition below then tells us how well these anisotropic wavelet projections approximate functions in $\mathcal{B}^{\boldsymbol{\alpha}}_{p,q}$.
\begin{proposition}\label{prop:wavelet}
Let $0<\alpha_l<\eta+1, l=1,\dotsc,d$. For any $g\in\mathcal{B}^{\boldsymbol{\alpha}}_{p,q}$, let $K_{\boldsymbol{W}}(g)$ be its projected version at level $\boldsymbol{W}$ as described above, then there exists constant $C>0$ depending on the wavelets used such that
\begin{align}
\|K_{\boldsymbol{W}}(g)-g\|_p\leq C\|g\|_{\mathcal{B}^{\boldsymbol{\alpha}}_{p,q}}\sum_{l=1}^d2^{-\alpha_lW_l}.
\end{align}
\end{proposition}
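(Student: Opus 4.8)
The plan is to exploit the tensor‑product structure of the basis and reduce the $d$‑variate approximation to $d$ univariate ones. Let $P_W^{(l)}$ denote the univariate CDV projection of resolution $W$ acting on the $l$‑th coordinate and as the identity on the remaining ones, so that $K_{\boldsymbol{W}}=P_{W_1}^{(1)}\cdots P_{W_d}^{(d)}$ (the factors commuting, since they act on distinct coordinates). I would start from the telescoping identity
\begin{align*}
I-P_{W_1}^{(1)}\cdots P_{W_d}^{(d)}=\sum_{l=1}^d\bigl(P_{W_1}^{(1)}\cdots P_{W_{l-1}}^{(l-1)}\bigr)\bigl(I-P_{W_l}^{(l)}\bigr),
\end{align*}
which expresses $g-K_{\boldsymbol{W}}(g)$ as a sum of $d$ terms, the $l$‑th being $(P_{W_1}^{(1)}\cdots P_{W_{l-1}}^{(l-1)})(I-P_{W_l}^{(l)})g$. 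Since the univariate projections $P_W^{(l)}$ are bounded on $L_p[0,1]$ uniformly in the resolution $W$ for every $1\le p\le\infty$ (a standard property of CDV wavelets; for $p=\infty$ one works with $C_u[0,1]$), the composition $P_{W_1}^{(1)}\cdots P_{W_{l-1}}^{(l-1)}$ is bounded on $L_p([0,1]^d)$ with a constant independent of $\boldsymbol{W}$. By the triangle inequality it is then enough to prove, for each fixed $l$,
\begin{align*}
\bigl\|(I-P_{W_l}^{(l)})g\bigr\|_p\le C\,\|g\|_{\mathcal{B}^{\boldsymbol{\alpha}}_{p,q}}\,2^{-\alpha_lW_l}.
\end{align*}

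To establish this one‑directional estimate I would freeze the other variables $\boldsymbol{x}_{-l}=(x_m)_{m\neq l}$, view $g(\cdot,\boldsymbol{x}_{-l})$ as a function of $x_l$ alone, and invoke the classical univariate wavelet approximation bound in the $l$‑th variable: decomposing $I-P_{W_l}^{(l)}$ into detail blocks at resolutions $j_l\ge W_l$, using the $L_p$‑stability of $\{\psi_{j_l,k_l}\}_{k_l}$ at each resolution, and summing the resulting geometric series $\sum_{j_l\ge W_l}2^{-\alpha_lj_l}\asymp 2^{-\alpha_lW_l}$ --- which converges precisely because $\alpha_l>0$, with a preliminary H\"older step in $j_l$ against the $\ell_q$‑norm over scales when $q<\infty$ --- yields $\|(I-P_{W_l}^{(l)})g(\cdot,\boldsymbol{x}_{-l})\|_{L_p(dx_l)}\lesssim 2^{-\alpha_lW_l}\|g(\cdot,\boldsymbol{x}_{-l})\|_{\mathcal{B}^{\alpha_l}_{p,q}(x_l)}$, with the univariate Besov norm in the $l$‑th variable on the right. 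Raising to the $p$‑th power and integrating in $\boldsymbol{x}_{-l}\in[0,1]^{d-1}$ reduces everything to the norm embedding
\begin{align*}
\Bigl(\int_{[0,1]^{d-1}}\|g(\cdot,\boldsymbol{x}_{-l})\|_{\mathcal{B}^{\alpha_l}_{p,q}(x_l)}^p\,d\boldsymbol{x}_{-l}\Bigr)^{1/p}\lesssim \|g\|_{\mathcal{B}^{\boldsymbol{\alpha}}_{p,q}},
\end{align*}
i.e.\ that the anisotropic norm of Definition~\ref{definition:holder} controls the mixed ``Besov‑$\alpha_l$ in $x_l$, $L_p$ in the other variables'' norm. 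I would prove this by expanding both sides in the tensor wavelet basis and comparing the dyadic scale‑weights coordinate by coordinate; the specific exponent $\alpha_l j_l(\tfrac1d+\tfrac1{2\alpha^{*}}-\tfrac1{\alpha^{*}p})$ in \eqref{eq:besov} is tailored so that this comparison closes and returns exactly the factor $2^{-\alpha_lW_l}$.

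The step I expect to be the real obstacle is this last norm embedding: controlling, by a single anisotropic Besov norm, the family of mixed norms --- one per coordinate --- that only see Besov smoothness in one direction. Carrying it out rigorously means iterating the univariate wavelet $L_p$‑characterization one coordinate at a time while keeping careful track of how the dyadic weights over the $d$ coordinates interact with the interlaced $\ell_p$‑sums over translates and $\ell_q$‑sums over scales (plus the usual Minkowski/H\"older interchanges). The delicacy is genuine: the naive alternative of bounding $\|g-K_{\boldsymbol{W}}(g)\|_p$ directly by the sum, over all resolution vectors $\boldsymbol{j}$ with $j_l\ge W_l$ for some $l$, of the corresponding level‑wise $L_p$‑norms is too lossy, since for strongly anisotropic $\boldsymbol{\alpha}$ the dyadic exponents it produces in the non‑truncated coordinates need not be negative and the sum diverges; this is exactly why the coordinate‑wise telescoping is needed. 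The remaining ingredients are routine bookkeeping: at the endpoints $p=\infty$ or $q=\infty$ one replaces the relevant $\ell_p$‑ and $\ell_q$‑norms by suprema throughout, and one must reinstate the $2^d-1$ father/mother tensor‑product types $\psi^{\boldsymbol{i}}_{\boldsymbol{j},\boldsymbol{k}}$, $\boldsymbol{i}\in\mathcal{I}$, that have been notationally suppressed, each giving an entirely analogous contribution.
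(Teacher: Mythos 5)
Your telescoping-over-coordinates strategy is a genuinely different route from the paper's. The paper does not decompose $I-K_{\boldsymbol{W}}$ at the operator level into univariate pieces; instead it tiles $[0,1]^d$ by dyadic cubes $\mathcal{I}_{\boldsymbol{k}}$, exploits that $K_{\boldsymbol{W}}$ reproduces tensor polynomials and is $L_p$-bounded, invokes a Jackson-type estimate for tensor Taylor polynomials (Schumaker, Thm.~13.18) to get $\|(g-p_{\boldsymbol{k}})|_{\mathcal{I}_{\boldsymbol{k}}}\|_p\lesssim\sum_{l}2^{-\alpha_lW_l}\|\tfrac{\partial^{\alpha_l}}{\partial x_l^{\alpha_l}}g|_{\mathcal{I}_{\boldsymbol{k}}}\|_p$, and then sums over $\boldsymbol{k}$ and closes via a Besov-norm equivalence (Gin\'e--Nickl, Prop.~4.3.8). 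The paper's proof lives entirely in the realm of local polynomial approximation and directional fractional-derivative $L_p$-norms; it never estimates $I-P_{W_l}^{(l)}$ through wavelet coefficients.

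The step you flag as the real obstacle --- the mixed-norm embedding --- is, with the norm of Definition~\ref{definition:holder}, false, and this is a genuine gap. Take $d=2$, $p=q=2$ and $\alpha_1=\alpha_2=1$, so $\alpha^{*}=1$ and the exponent in \eqref{eq:besov} is $(j_1+j_2)/2$; thus $\|g\|_{\mathcal{B}^{\boldsymbol{\alpha}}_{2,2}}^2$ is comparable (up to the father block) to $\sum_{\boldsymbol{j}}2^{j_1+j_2}\|\boldsymbol{\theta}_{\boldsymbol{j}}\|_2^2$. By orthonormality of the wavelets in $x_2$, the mixed norm you need satisfies $\int_0^1\|g(\cdot,x_2)\|^2_{\mathcal{B}^{\alpha_1}_{2,2}(x_1)}\,dx_2\asymp\sum_{\boldsymbol{j}}2^{2j_1}\|\boldsymbol{\theta}_{\boldsymbol{j}}\|_2^2$. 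Putting a single nonzero coefficient of size $2^{-j_1}$ at each $\boldsymbol{j}=(j_1,N_2)$, $j_1\geq N_1$, gives a function whose anisotropic norm is finite ($\sum_{j_1}2^{-j_1}<\infty$) but whose mixed norm diverges ($\sum_{j_1}1=\infty$). So the embedding fails, and with it the key reduction $\|(I-P_{W_l}^{(l)})g\|_p\lesssim2^{-\alpha_lW_l}\|g\|_{\mathcal{B}^{\boldsymbol{\alpha}}_{p,q}}$ that the telescoping was supposed to deliver. The Jackson/polynomial-reproduction route is not a stylistic alternative: it is what makes the proposition available over the full range $\alpha_l\in(0,\eta+1)$, bypassing the one-directional wavelet-coefficient bookkeeping in which your approach gets stuck.
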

The proof of this proposition can be found in Section \ref{sec:proof1}. We are now ready to introduce the assumptions on the underlying true model for \eqref{eq:model}.
\end{definition}
\begin{assumption}\label{assump:assumption}
Under the true distribution $P_0$, $Y_i=f_0(\boldsymbol{X}_i)+\varepsilon_i, i=1,\dotsc,n$, where $f_0\in\mathcal{B}^{\boldsymbol{\alpha}}_{\infty,\infty}$ and $\varepsilon_i$ are i.i.d.~Gaussian with mean $0$ and finite variance $\sigma_0^2>0$ for $i=1,\dotsc,n$. Here, $\boldsymbol{\alpha}=(\alpha_1,\dotsc,\alpha_d)^T\in(0,\infty)^d$ is unknown.
\end{assumption}

\begin{remark}
Inspection of the main proof shows that we can actually relax the assumption on errors so that they are sub-Gaussian, and hence allowing the model to be possibly misspecified. However, we would need to use the misspecified version of the master theorem (Theorem 4.1 of \citep{miss}) to prove $L_2$-contraction for the mother wavelet coefficients as part of the overall proof. We refrain from doing this because this will add extra technicalities that are a distraction for the main $L_\infty$-task at hand.
\end{remark}

We define $\vartheta_{\boldsymbol{m}}^0=\langle f_0, \varphi_{\boldsymbol{N},\boldsymbol{m}}\rangle$ and $\theta_{\boldsymbol{j},\boldsymbol{k}}^0=\langle f_0, \psi_{\boldsymbol{j},\boldsymbol{k}}\rangle$ to be the true wavelet coefficients. We denote $\mathrm{E}_0(\cdot)$ as the expectation operator taken with respect to $P_0$ and write $\boldsymbol{Y}=(Y_1,\dotsc,Y_n)^T$. Moreover, we write Besov ball of radius $R>0$ as $\mathcal{B}^{\boldsymbol{\alpha}}_{p,q}(R):=\{f:\|f\|_{\mathcal{B}^{\boldsymbol{\alpha}}_{p,q}}\leq R\}$.

\section{Adaptive posterior contraction}\label{sec:result}
Before establishing supremum norm contraction rate for $f$, a preliminary key step is to show that the posterior distribution of $\sigma$ is consistent under the hierarchical priors of \eqref{eq:prior}. We therefore begin with the following proposition whose proof is given in Section \ref{sec:proof1}.

\begin{proposition}\label{prop:consistent}
Under Assumption \ref{assump:assumption}, we can conclude that for any prior on $\sigma$ with positive and continuous density, the posterior distribution of $\sigma$ is consistent, uniformly over $f_0\in\mathcal{B}^{\boldsymbol{\alpha}}_{\infty,\infty}(R)$ for any $R>0$ and for any $\boldsymbol{\alpha}$ such that $0<\alpha_l<\eta+1$ where $\eta$ is the regularity of the wavelet bases.
\end{proposition}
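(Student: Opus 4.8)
Our plan is to analyze the marginal posterior of $\sigma$ directly, through the ratio of its unnormalised numerator and denominator, exploiting that conditionally on $\sigma$ the prior on $f$ in \eqref{eq:prior} is supported on a finite-dimensional space $V_n$ of dimension $p_n\asymp\prod_{l=1}^d2^{J_{n,l}}=\sqrt{n/\log n}$. Fix $\delta>0$ and let $\ell_n(f,\sigma)$ denote the Gaussian log-likelihood of $\boldsymbol{Y}$; we write
\[
\Pi(|\sigma-\sigma_0|>\delta\mid\boldsymbol{Y})=\frac{N_n}{D_n},\qquad N_n=\int_{|\sigma-\sigma_0|>\delta}\int e^{\ell_n(f,\sigma)-\ell_n(f_0,\sigma_0)}\,d\Pi(f)\,\pi_\sigma(\sigma)\,d\sigma,
\]
with $D_n$ the same double integral over all $\sigma>0$. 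The goal is to show that, with $P_0$-probability tending to one and uniformly over $f_0\in\mathcal{B}^{\boldsymbol{\alpha}}_{\infty,\infty}(R)$, one has $N_n\le e^{-c(\delta)n}$ for some $c(\delta)>0$ and $D_n\ge e^{-o(n)}$.

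For the denominator, restrict the integral to $\sigma\in(\sigma_0,\sigma_0(1+\gamma_n))$ with $\gamma_n\downarrow0$ and to $f$ whose wavelet coefficients lie within $\zeta_n$ of those of the projection $K_{\boldsymbol{J}_n}(f_0)$, with $\zeta_n$ polynomially small in $p_n$. On this set the average Kullback--Leibler divergence and its variance are $\lesssim\gamma_n^2+\|K_{\boldsymbol{J}_n}(f_0)-f_0\|_\infty^2+p_n\zeta_n^2=:\bar\epsilon_n^2\to0$, using Proposition \ref{prop:wavelet} with $p=\infty$ for the bias term and the $L_2$-orthonormality of the wavelets (up to the discrete-to-continuous approximation error, which the lemmas of Section \ref{sec:proof3} keep negligible) for the coefficient term. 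The standard evidence lower bound for independent, non-identically distributed observations (cf.\ \citep{converge2007}) then yields $D_n\ge\Pi(B_n)e^{-Cn\bar\epsilon_n^2}$ with probability tending to one, where $B_n$ is the above set. Finally $\Pi(B_n)\ge e^{-o(n)}$: the $\sigma$-factor contributes $\gtrsim\gamma_n$ by continuity and positivity of $\pi_\sigma$, while the $f$-factor is bounded below by $(2p_{\mathrm{min}}\zeta_n n^{-\lambda})^{p_n}$, using $\omega_{\cdot,n}\ge n^{-\lambda}$ to switch on each slab and \eqref{eq:density} together with the uniform boundedness of the wavelet coefficients of $f_0$ over the Besov ball; since $p_n\log n\asymp\sqrt{n\log n}=o(n)$, this is $e^{-o(n)}$, which is exactly the point at which the truncation level $\prod_l2^{J_{n,l}}=\sqrt{n/\log n}$ is used.

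For the numerator, bound the inner integral by the supremum of the likelihood, $\int e^{\ell_n(f,\sigma)}d\Pi(f)\le\exp\bigl(\sup_{f\in V_n}\ell_n(f,\sigma)\bigr)$ (the prior being a probability measure supported in $V_n$), and note that $\sup_{f\in V_n}\ell_n(f,\sigma)=-n\log\sigma-\tfrac{1}{2\sigma^2}\|(I-\Pi_{V_n})\boldsymbol{Y}\|^2+c$, where $\Pi_{V_n}$ is the Euclidean orthogonal projection onto the span of the design-evaluated wavelets. Writing $\boldsymbol{Y}=\boldsymbol{f}_0+\boldsymbol{\varepsilon}$ with $\boldsymbol{f}_0=(f_0(\boldsymbol{X}_i))_i$ and $\boldsymbol{\varepsilon}=(\varepsilon_i)_i$, the residual sum of squares equals $n\sigma_0^2(1+o_P(1))$ uniformly over the Besov ball: $\|(I-\Pi_{V_n})\boldsymbol{\varepsilon}\|^2\sim\sigma_0^2\chi^2_{n-p_n}$ concentrates at $n\sigma_0^2$ because $p_n=o(n)$, the bias $\|(I-\Pi_{V_n})\boldsymbol{f}_0\|^2\le\sum_i(f_0-K_{\boldsymbol{J}_n}f_0)^2(\boldsymbol{X}_i)=o(n)$ by Proposition \ref{prop:wavelet} and \eqref{eq:cdf}, and the cross term is $o_P(n)$ by Cauchy--Schwarz; similarly $\ell_n(f_0,\sigma_0)=-n\log\sigma_0-\tfrac{1}{2\sigma_0^2}\|\boldsymbol{\varepsilon}\|^2+c$ with $\|\boldsymbol{\varepsilon}\|^2=n\sigma_0^2(1+o_P(1))$. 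Hence the integrand of $N_n$ is at most $\exp\bigl(\tfrac{n}{2}(\log v-v+1)+o_P(n)\bigr)$ with $v=\sigma_0^2/\sigma^2$; since $\log v-v+1<0$ for $v\ne1$ and is bounded away from $0$ by some $-c(\delta)<0$ whenever $|\sigma-\sigma_0|>\delta$, and since this exponent diverges to $-\infty$ as $\sigma\to0$ or $\sigma\to\infty$ (so the $\sigma$-integral against $\pi_\sigma$ converges), we get $N_n\le e^{-c'(\delta)n}$ with probability tending to one, and therefore $N_n/D_n\le e^{-c'(\delta)n+o(n)}\to0$ in $P_0$-probability, uniformly over the Besov ball.

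The main obstacle is the non-orthogonality of the wavelet design matrix inherent to fixed-design regression: the clean identities used above --- that $\|(I-\Pi_{V_n})\boldsymbol{\varepsilon}\|^2$ is exactly $\sigma_0^2\chi^2_{n-p_n}$ (which already requires the Gram matrix of the design-evaluated wavelets to be invertible, so that $p_n$ is the genuine rank of $V_n$), the passage from the empirical norm $\tfrac{1}{n}\sum_i(f-g)^2(\boldsymbol{X}_i)$ to the $\ell_2$-norm of coefficient differences, and the control of $\|(I-\Pi_{V_n})\boldsymbol{f}_0\|^2$ --- hold only up to approximation errors that must be propagated and shown to be of smaller order than the $\chi^2$ fluctuations and the truncation bias. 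Keeping all of these errors $o(n)$, which is precisely what the discrete-to-continuous approximation lemmas of Section \ref{sec:proof3} provide, is the quantitative heart of the argument; the uniformity over $\mathcal{B}^{\boldsymbol{\alpha}}_{\infty,\infty}(R)$ is then automatic, since the stochastic fluctuations do not depend on $f_0$ and the deterministic bias and prior-mass bounds are controlled uniformly on the ball.
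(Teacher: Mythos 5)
Your proposal is correct, and it takes a genuinely different route from the paper. The paper's proof of Proposition \ref{prop:consistent} is a one-line corollary of Lemma \ref{lem:l2contract}, which establishes the stronger joint $L_2$-contraction of $(f,\sigma^2)$ at rate $(n/\log n)^{-\alpha^*/(2\alpha^*+d)}$ via the master theorem: one constructs exponential-error tests for $\|f-f_0\|_n+|\sigma^2-\sigma_0^2|$ against sieve slices and combines them with the usual Kullback--Leibler prior-mass and sieve-remainder bounds. You instead avoid tests altogether. Your denominator bound is essentially the same Kullback--Leibler evidence lower bound as in the paper (and can afford a much cruder $e^{-o(n)}$ estimate precisely because $p_n=\prod_l2^{J_{n,l}}=\sqrt{n/\log n}$ gives $p_n\log n=o(n)$), but your numerator argument is new and elementary: since the prior on $f$ lives in the finite-dimensional space $V_n$, the inner integral is dominated by the supremum of the Gaussian likelihood over $V_n$, which is explicit, and then the residual sum of squares $\|(I-\Pi_{V_n})\boldsymbol{Y}\|^2$ concentrates at $n\sigma_0^2$ by the $\chi^2_{n-p_n}$ behaviour plus a bias term that Proposition \ref{prop:wavelet} and \eqref{eq:cdf} render $o(n)$; the map $v\mapsto\log v-v+1$ then does the rest. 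This MLE-domination step is the genuine simplification --- it exploits the finite-dimensional truncation in a way the master-theorem route does not. What you lose is quantitative: your argument yields consistency (and, if pushed, a contraction rate no faster than roughly $n^{-1/4}$ from the $\chi^2$ fluctuation), whereas Lemma \ref{lem:l2contract} delivers the rate $(n/\log n)^{-\alpha^*/(2\alpha^*+d)}$ for $\sigma$ jointly with the $L_2$-rate for the coefficients (Corollary \ref{cor:l2contract}), both of which the paper reuses in Lemmas \ref{lem:betabound}--\ref{lem:lemma2}. Since the paper needs Lemma \ref{lem:l2contract} anyway, citing it here is the economical choice; as a self-contained proof of the proposition as stated, yours is correct and arguably more transparent. (One small caveat shared by both arguments: the uniformity ``for any $R>0$'' implicitly requires $R\le R_0-O(1)$ so that the slab density is bounded below near the true coefficients, as the paper itself imposes in Theorem \ref{th:adapt}.)
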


Using wavelet expansions such as \eqref{eq:prior}, we can work with wavelet coefficients instead of $f$ and treat them as component-wise signals we are trying to recover. In all our calculations, the threshold $\gamma\sqrt{\log{n}/n}$ with some appropriately chosen constant $\gamma>0$ is of crucial importance as it serves as a cutoff to determine statistically which signal is considered ``large" or ``small". The speed of which the posterior will contract to the truth in $L_\infty$-norm is then dictated by these two conditions:

\begin{enumerate}
\item Signal detection errors, i.e., $\theta_{\boldsymbol{j},\boldsymbol{k}}=0$ but the true signal is ``large" $|\theta_{\boldsymbol{j},\boldsymbol{k}}^0|>\gamma\sqrt{\log{n}/n}$ for some large enough $\gamma$ and vice versa are unlikely to occur under the posterior.
\item The posterior concentrates within a $\gamma\sqrt{\log{n}/n}$-neighborhood (for some large enough $\gamma$) around large detectable signals.
\end{enumerate}

Asymptotically, this implies that the spike-and-slab posterior behaves like a local wavelet thresholding operator, which does coefficient-wise thresholding with $\gamma\sqrt{\log{n}/n}$ as threshold. During the course of establishing these conditions, we have to deal with discrete approximation errors as encoded in \eqref{eq:cdf}, finite truncation error of Proposition \ref{prop:wavelet} and stochastic error in our model \eqref{eq:model}. This requires a very delicate balancing of these opposing errors that we propagate throughout our calculations, and we arrive at our results by ensuring that no single source of error will dominate the others.

In many applications such as model selection and high-dimensional regression, the weights $\omega_{\boldsymbol{j},n}$ are typically endowed with another layer of hyper-prior or estimated using empirical Bayes (e.g.,\citep{ishwaran2005,johnstone2005,castillo2015}). However for sup-norm posterior contraction, it suffices to choose them fixed beforehand as was done in Section \ref{sec:prior}. This is because in order to reduce signal detection error as alluded in $1$.~above, $\theta_{\boldsymbol{j},\boldsymbol{k}}$ needs to decay in similar manner as $\theta_{\boldsymbol{j},\boldsymbol{k}}^0$ does, i.e., in the same form as \eqref{eq:infbesov}. To ensure this using the spike-and-slab prior, it is then enough to set $\omega_{\boldsymbol{j},n}\leq2^{-\sum_{l=1}^dj_l(1+\mu_l)}$. The following main results show that by selecting coefficients using these fixed weights, the posteriors of $f$ and its mixed partial derivatives contract adaptively in $L_\infty$ at the optimal rate to the truth. Note here that the same weights can be used for $f$ and all orders of its mixed partial derivatives. Clearly, one cannot adapt at each dimension beyond the regularity $\eta$ of the wavelets, but it will be seen that there is a lower limit of adaptation present that prevents us to adapt arbitrarily close to $0$ (see Section \ref{sec:lower} for a more thorough discussion). Therefore, this lead us to formulate our range of adaptation as
\begin{align}\label{eq:adaptive}
\mathbb{A}_{\boldsymbol{r}}&=\left\{\boldsymbol{\alpha}:\frac{2(r_l+1)\alpha^{*}d}{2\alpha^{*}+d}<\alpha_l<\eta+1, l=1,\dotsc,d\right\},
\end{align}
and if $\boldsymbol{r}=\boldsymbol{0}$, we simply write $\mathbb{A}_{\boldsymbol{0}}$ as $\mathbb{A}$.

\begin{theorem}(Adaptive $L_\infty$-contraction)\label{th:adapt}\\
(a) For the regression function:\\
For any $0<R\leq R_0-1/2$ and some constants $\xi,M>0$,
\begin{align*}
\sup_{\boldsymbol{\alpha}\in\mathbb{A}}\sup_{f_0\in\mathcal{B}^{\boldsymbol{\alpha}}_{\infty,\infty}(R)}
\mathrm{E}_0\Pi\left(\|f-f_0\|_\infty>M\left(n/\log{n}\right)^{-\frac{\alpha^{*}}{2\alpha^{*}+d}}\middle|\boldsymbol{Y}\right)\leq\frac{(\log{n})^d}{n^\xi}.
\end{align*}

\noindent(b) For mixed partial derivatives:\\
Let $\boldsymbol{r}\geq\boldsymbol{0}$ such that $\boldsymbol{r}\neq\boldsymbol{0}$ and $r_l<\eta+1,l=1,\dotsc,d$. Then for any $0<R\leq R_0-1/2$ and some constants $\xi,M>0$, we have uniformly over $\boldsymbol{\alpha}\in\mathbb{A}_{\boldsymbol{r}}$ and $f_0\in\mathcal{B}^{\boldsymbol{\alpha}}_{\infty,\infty}(R)$ that
\begin{align*}
\mathrm{E}_0\Pi\left(\|D^{\boldsymbol{r}}f-D^{\boldsymbol{r}}f_0\|_\infty>M(n/\log{n})^{-\frac{\alpha^{*}\{1-\sum_{l=1}^d(r_l/\alpha_l)\}}{2\alpha^{*}+d}}\middle|\boldsymbol{Y}\right)\leq\frac{(\log{n})^d}{n^\xi}.
\end{align*}
\end{theorem}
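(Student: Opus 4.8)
The plan is to pass from the regression posterior to a tractable \emph{quasi-white-noise} posterior, in which the wavelet coefficients are seen through a product of one-dimensional Gaussian experiments, and then to show that in that model the spike-and-slab posterior behaves like coordinatewise thresholding at level $\gamma\sqrt{\log n/n}$. It suffices to prove part (b) for an arbitrary $\boldsymbol{r}\ge\boldsymbol{0}$ with $r_l<\eta+1$; part (a) is the case $\boldsymbol{r}=\boldsymbol{0}$, for which $\mathbb{A}_{\boldsymbol{0}}=\mathbb{A}$, and the argument is uniform in $\boldsymbol{\alpha}$ throughout. Writing $\epsilon_{n,\boldsymbol{r}}(\boldsymbol{\alpha})=(n/\log n)^{-\alpha^{*}\{1-\sum_l r_l/\alpha_l\}/(2\alpha^{*}+d)}$ for the target rate, I would apply $D^{\boldsymbol{r}}$ to the expansion \eqref{eq:prior} and split
\[
D^{\boldsymbol{r}}f-D^{\boldsymbol{r}}f_0
=\sum_{\boldsymbol{m}}(\vartheta_{\boldsymbol{m}}-\vartheta_{\boldsymbol{m}}^0)D^{\boldsymbol{r}}\varphi_{\boldsymbol{N},\boldsymbol{m}}
+\sum_{\boldsymbol{j},\boldsymbol{k}}(\theta_{\boldsymbol{j},\boldsymbol{k}}-\theta_{\boldsymbol{j},\boldsymbol{k}}^0)D^{\boldsymbol{r}}\psi_{\boldsymbol{j},\boldsymbol{k}}
-\sum_{\boldsymbol{j}:\,\exists l,\,j_l\ge J_{n,l}}\theta_{\boldsymbol{j},\boldsymbol{k}}^0\,D^{\boldsymbol{r}}\psi_{\boldsymbol{j},\boldsymbol{k}},
\]
the first two sums over the ranges in \eqref{eq:prior}. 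The last term is deterministic: from $f_0\in\mathcal{B}^{\boldsymbol{\alpha}}_{\infty,\infty}(R)$ we have $\sup_{\boldsymbol{k}}|\theta_{\boldsymbol{j},\boldsymbol{k}}^0|\lesssim R\,2^{-\sum_l\alpha_lj_l(1/d+1/(2\alpha^{*}))}$, while $\|D^{\boldsymbol{r}}\psi_{\boldsymbol{j},\boldsymbol{k}}\|_\infty\asymp2^{\sum_l j_l(r_l+1/2)}$ and at each point only $O(1)$ of the $\{\psi_{\boldsymbol{j},\boldsymbol{k}}\}_{\boldsymbol{k}}$ are nonzero; since $\boldsymbol{\alpha}\in\mathbb{A}_{\boldsymbol{r}}$ makes each directional exponent $r_l+1/2-\alpha_l(1/d+1/(2\alpha^{*}))$ strictly negative, the tail series converges, and with $\prod_l2^{J_{n,l}}=\sqrt{n/\log n}$ it is $\lesssim\epsilon_{n,\boldsymbol{r}}(\boldsymbol{\alpha})$ uniformly over the Besov ball --- a derivative version of Proposition \ref{prop:wavelet}. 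The first sum has $O(1)$ terms (as $\boldsymbol{N}$ is fixed) with $\|D^{\boldsymbol{r}}\varphi_{\boldsymbol{N},\boldsymbol{m}}\|_\infty=O(1)$, so it is negligible once each $|\vartheta_{\boldsymbol{m}}-\vartheta_{\boldsymbol{m}}^0|=O(\sqrt{\log n/n})$ with posterior probability tending to one, which follows from the $L_2$-posterior-contraction bound for the spike-and-slab father block (Section \ref{sec:proof3}) together with Proposition \ref{prop:consistent}.

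For the second sum I would carry out the reduction of Section \ref{sec:proof2}. Fix an event $A_n$ with $P_0(A_n^c)\lesssim(\log n)^d/n^\xi$ on which the empirical Gram matrix of $\{\psi_{\boldsymbol{j},\boldsymbol{k}}\}$ at the $\boldsymbol{X}_i$ is close to the identity (using $\prod_l2^{J_{n,l}}=\sqrt{n/\log n}$ and the design condition \eqref{eq:cdf}), $\sigma$ lies in a fixed neighbourhood of $\sigma_0$ (Proposition \ref{prop:consistent}), and the empirical quasi-white-noise coefficients deviate from $\theta_{\boldsymbol{j},\boldsymbol{k}}^0$ by $O(\sqrt{\log n/n})$ uniformly over the $\lesssim\sqrt{n/\log n}$ retained coordinates. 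On $A_n$ one dominates the regression posterior of $(\theta_{\boldsymbol{j},\boldsymbol{k}})$ by a product spike-and-slab posterior in a model where $\widehat\theta_{\boldsymbol{j},\boldsymbol{k}}=\theta_{\boldsymbol{j},\boldsymbol{k}}^0+\zeta_{\boldsymbol{j},\boldsymbol{k}}$ with $\zeta_{\boldsymbol{j},\boldsymbol{k}}$ centred Gaussian of standard deviation $\asymp n^{-1/2}$. Adapting Hoffmann et al.~\citep{adaptsupnorm}, the weight conditions $n^{-\lambda}\le\omega_{\boldsymbol{j},n}\le2^{-\sum_l j_l(1+\mu_l)}$ with $\mu_l>1/2$ and the slab bound \eqref{eq:density} then give: \emph{(detection)} the posterior mass of $\{\theta_{\boldsymbol{j},\boldsymbol{k}}\ne0\}$ vanishes whenever $|\widehat\theta_{\boldsymbol{j},\boldsymbol{k}}|\lesssim\sqrt{\log n/n}$ --- the upper bound on $\omega_{\boldsymbol{j},n}$ makes the summed expected number of false selections $o(1)$ --- and tends to $1$ whenever $|\widehat\theta_{\boldsymbol{j},\boldsymbol{k}}|\gtrsim\gamma\sqrt{\log n/n}$, by the lower bound $\omega_{\boldsymbol{j},n}\ge n^{-\lambda}$ and the slab lower bound; \emph{(concentration)} conditionally on $\theta_{\boldsymbol{j},\boldsymbol{k}}\ne0$ the slab posterior is essentially $\mathrm{N}(\widehat\theta_{\boldsymbol{j},\boldsymbol{k}},O(n^{-1}))$, hence concentrates in a $\gamma\sqrt{\log n/n}$-ball about $\theta_{\boldsymbol{j},\boldsymbol{k}}^0$; and coordinates left at $0$ satisfy $|\theta_{\boldsymbol{j},\boldsymbol{k}}-\theta_{\boldsymbol{j},\boldsymbol{k}}^0|=|\theta_{\boldsymbol{j},\boldsymbol{k}}^0|\lesssim\sqrt{\log n/n}$ there. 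Thus with posterior probability $1-o(1)$, $\sup_{\boldsymbol{k}}|\theta_{\boldsymbol{j},\boldsymbol{k}}-\theta_{\boldsymbol{j},\boldsymbol{k}}^0|\lesssim\sqrt{\log n/n}$ at every level, uniformly over the retained coordinates.

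To assemble the sup-norm bound for $\sum_{\boldsymbol{j},\boldsymbol{k}}(\theta_{\boldsymbol{j},\boldsymbol{k}}-\theta_{\boldsymbol{j},\boldsymbol{k}}^0)D^{\boldsymbol{r}}\psi_{\boldsymbol{j},\boldsymbol{k}}$ I would sum level by level, using $\|\sum_{\boldsymbol{k}}(\theta_{\boldsymbol{j},\boldsymbol{k}}-\theta_{\boldsymbol{j},\boldsymbol{k}}^0)D^{\boldsymbol{r}}\psi_{\boldsymbol{j},\boldsymbol{k}}\|_\infty\lesssim\big(\sup_{\boldsymbol{k}}|\theta_{\boldsymbol{j},\boldsymbol{k}}-\theta_{\boldsymbol{j},\boldsymbol{k}}^0|\big)2^{\sum_l j_l(r_l+1/2)}$, and split the levels at the \emph{effective anisotropic resolution} $\boldsymbol{j}^{*}$ characterised by $\alpha_l j_l^{*}\equiv\frac{\alpha^{*}}{2\alpha^{*}+d}\log_2(n/\log n)$, the balanced point of the detection frontier $\sum_l\alpha_lj_l(1/d+1/(2\alpha^{*}))=\tfrac12\log_2(n/\log n)$. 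For levels $\boldsymbol{j}\preceq\boldsymbol{j}^{*}$ the coefficient-wise error is $O(\sqrt{\log n/n})$, so their contribution is $\lesssim\sqrt{\log n/n}\,2^{\sum_l j_l^{*}(r_l+1/2)}$, which a direct exponent computation identifies with $\epsilon_{n,\boldsymbol{r}}(\boldsymbol{\alpha})$; for levels with some $j_l>j_l^{*}$ the true coefficients lie below the threshold, are thresholded to $0$ by the detection step, and contribute a bias-type sum estimated exactly as the deterministic term above, again $\lesssim\epsilon_{n,\boldsymbol{r}}(\boldsymbol{\alpha})$ --- it is here that the lower edge of $\mathbb{A}_{\boldsymbol{r}}$ (and the truncation $\prod_l2^{J_{n,l}}=\sqrt{n/\log n}$ it pairs with) is used. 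Combining the three pieces with $P_0(A_n^c)\lesssim(\log n)^d/n^\xi$ then yields $\mathrm{E}_0\Pi(\|D^{\boldsymbol{r}}f-D^{\boldsymbol{r}}f_0\|_\infty>M\epsilon_{n,\boldsymbol{r}}(\boldsymbol{\alpha})\mid\boldsymbol{Y})\le(\log n)^d/n^\xi$ uniformly over $f_0\in\mathcal{B}^{\boldsymbol{\alpha}}_{\infty,\infty}(R)$ and $\boldsymbol{\alpha}\in\mathbb{A}_{\boldsymbol{r}}$.

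The hard part will be the reduction in the second paragraph: the wavelet design matrix evaluated at the $\boldsymbol{X}_i$ is not orthogonal, and the only structural control on the design is the $O(1/n)$ closeness \eqref{eq:cdf} of $G_n$ to the uniform law. One must show that replacing discrete inner products and the empirical Gram matrix by their continuous counterparts perturbs the likelihood --- hence the induced posterior of $(\theta_{\boldsymbol{j},\boldsymbol{k}})$ --- by an amount genuinely negligible against the $\sqrt{\log n/n}$ stochastic scale, uniformly over all $\lesssim\sqrt{n/\log n}$ retained coordinates and over the full non-conjugate slab, while keeping the exceptional set below $(\log n)^d/n^{\xi}$. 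A secondary difficulty, specific to the anisotropic setting, is the level bookkeeping: there is a whole frontier hyperplane of critical levels rather than a single resolution, and one must verify that the sums over $\{\boldsymbol{j}\preceq\boldsymbol{j}^{*}\}$ and over its complement are indeed governed by the balanced level $\boldsymbol{j}^{*}$ --- which is what produces the harmonic-mean exponent --- uniformly in $\boldsymbol{\alpha}\in\mathbb{A}_{\boldsymbol{r}}$.
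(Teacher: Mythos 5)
Your proposal follows essentially the same plan as the paper: reduce to the derivative case, split $D^{\boldsymbol{r}}f - D^{\boldsymbol{r}}f_0$ into a father block, a mother block and a truncation bias, reduce the mother-block posterior to a quasi-white-noise one via Section~\ref{sec:proof2}, establish detection and concentration (the paper's events $\mathcal{B}$, $\mathcal{C}$, $\mathcal{A}$), and sum the coefficient errors level by level with the split at $2^{J_{n,l}(\boldsymbol{\alpha})}\asymp(n/\log n)^{\alpha^*/\{\alpha_l(2\alpha^*+d)\}}$ and the $2^d-1$ anisotropic regimes. One place where your framing deviates from the mechanism in the paper and could mislead if taken literally: the paper does \emph{not} dominate the full regression posterior by a product spike-and-slab posterior, nor does it need the empirical Gram matrix to be close to (scaled) identity entrywise. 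Instead, for each fixed $(\boldsymbol{j},\boldsymbol{k})$ it isolates the $\theta_{\boldsymbol{j},\boldsymbol{k}}$-conditional likelihood, lumping all cross-level interference into a scalar $\beta_n(\widetilde\Theta)$, and proves via Cauchy--Schwarz, the Gram-matrix size estimates of Lemma~\ref{lem:phiphi}, the $L_2$-posterior contraction of Corollary~\ref{cor:l2contract}, and the truncation bias of Proposition~\ref{prop:wavelet} that $|\widetilde\beta_n(\widetilde\Theta)|=o_P(\sqrt{n\log n})$ uniformly on a high-posterior-probability set $\mathcal{W}_n$ (Lemma~\ref{lem:betabound}); one then takes a union bound over the $\lesssim\sqrt{n/\log n}$ active coordinates. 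Off-diagonal Gram entries are not small individually, only in the aggregate after this Cauchy--Schwarz/$L_2$-contraction step, and it is exactly the optimization in Lemma~\ref{lem:betabound} that dictates the truncation $2^{\sum_l J_{n,l}}=\sqrt{n/\log n}$ and the lower edge $\alpha^*>d/2$ of $\mathbb{A}_{\boldsymbol{r}}$. Since you flag the reduction as "the hard part" and defer it, your sketch is consistent with the paper, but it is worth stressing that this is not merely a technical residue to be dispatched later: it is precisely where the shape of the adaptation range $\mathbb{A}_{\boldsymbol{r}}$ in the theorem statement comes from.
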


\begin{remark}\label{rem:dim}
For the isotropic case where $\alpha_l=\alpha,l=1,\dotsc,d$, $\mathbb{A}_{\boldsymbol{r}}$ is defined through $\max\{d/2,\sum_{l=1}^dr_l\}<\alpha<\eta+1$.
\end{remark}

The proof of Theorem \ref{th:adapt} is given in Section \ref{sec:proof1}, and it has important implication in frequentist statistics. In particular, the posterior mean as an adaptive point estimator converges uniformly to $f_0$ at the same rate.

\begin{corollary}\label{cor:adapt}
Let $\boldsymbol{r}\geq\boldsymbol{0}$ such that $\boldsymbol{r}\neq\boldsymbol{0}$ and $r_l<\eta+1,l=1,\dotsc,d$, then for any $0<R\leq R_0-1/2$,
\begin{align*}
\sup_{\boldsymbol{\alpha}\in\mathbb{A}_{\boldsymbol{r}}}\sup_{f_0\in\mathcal{B}^{\boldsymbol{\alpha}}_{\infty,\infty}(R)}
\mathrm{E}_0\|\mathrm{E}(D^{\boldsymbol{r}}f|\boldsymbol{Y})-D^{\boldsymbol{r}}f_0\|_\infty\lesssim(n/\log{n})^{-\frac{\alpha^{*}\{1-\sum_{l=1}^d(r_l/\alpha_l)\}}{2\alpha^{*}+d}}.
\end{align*}
\end{corollary}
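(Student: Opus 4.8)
The plan is to derive the corollary from Theorem~\ref{th:adapt} by a bias-and-tail decomposition of the posterior mean, the only extra input being a crude polynomial-in-$n$ bound on the posterior second moment of $\|D^{\boldsymbol{r}}f-D^{\boldsymbol{r}}f_0\|_\infty$. Write $\epsilon_n=(n/\log n)^{-\alpha^{*}\{1-\sum_{l=1}^d(r_l/\alpha_l)\}/(2\alpha^{*}+d)}$ for the rate in Theorem~\ref{th:adapt}. Since $\mathrm{E}(D^{\boldsymbol{r}}f|\boldsymbol{Y})$ is the function $\boldsymbol{x}\mapsto\mathrm{E}(D^{\boldsymbol{r}}f(\boldsymbol{x})|\boldsymbol{Y})$, Jensen's inequality inside the supremum gives $\|\mathrm{E}(D^{\boldsymbol{r}}f|\boldsymbol{Y})-D^{\boldsymbol{r}}f_0\|_\infty\le\mathrm{E}(\|D^{\boldsymbol{r}}f-D^{\boldsymbol{r}}f_0\|_\infty|\boldsymbol{Y})$. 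Splitting on whether $\|D^{\boldsymbol{r}}f-D^{\boldsymbol{r}}f_0\|_\infty\le M\epsilon_n$ or not, bounding the first piece by $M\epsilon_n$ and the second by Cauchy--Schwarz, and then applying $\mathrm{E}_0$ together with Cauchy--Schwarz once more, I would obtain
\[
\mathrm{E}_0\|\mathrm{E}(D^{\boldsymbol{r}}f|\boldsymbol{Y})-D^{\boldsymbol{r}}f_0\|_\infty\le M\epsilon_n+\Big(\mathrm{E}_0\mathrm{E}\big(\|D^{\boldsymbol{r}}f-D^{\boldsymbol{r}}f_0\|_\infty^2\bigm|\boldsymbol{Y}\big)\Big)^{1/2}\Big(\mathrm{E}_0\Pi\big(\|D^{\boldsymbol{r}}f-D^{\boldsymbol{r}}f_0\|_\infty>M\epsilon_n\bigm|\boldsymbol{Y}\big)\Big)^{1/2},
\]
uniformly over $\boldsymbol{\alpha}\in\mathbb{A}_{\boldsymbol{r}}$ and $f_0\in\mathcal{B}^{\boldsymbol{\alpha}}_{\infty,\infty}(R)$. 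By Theorem~\ref{th:adapt} the last factor is at most $n^{-\xi/2}(\log n)^{d/2}$, so the corollary follows once $\mathrm{E}_0\mathrm{E}(\|D^{\boldsymbol{r}}f-D^{\boldsymbol{r}}f_0\|_\infty^2|\boldsymbol{Y})$ is bounded by a fixed power of $n$, provided $\xi$ — which can be made large by taking $M$ large in Theorem~\ref{th:adapt} — is large enough relative to that power (it suffices that $\xi$ exceed it by more than $2\alpha^{*}/(2\alpha^{*}+d)$, since $\epsilon_n$ is itself polynomial in $n$).

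For the second-moment bound I would decompose $D^{\boldsymbol{r}}f-D^{\boldsymbol{r}}f_0=\big(D^{\boldsymbol{r}}f-D^{\boldsymbol{r}}K_{\boldsymbol{J}_n}(f_0)\big)+\big(D^{\boldsymbol{r}}K_{\boldsymbol{J}_n}(f_0)-D^{\boldsymbol{r}}f_0\big)$, with $\boldsymbol{J}_n=(J_{n,1},\dotsc,J_{n,d})$ the prior truncation level. The second, deterministic, term tends to $0$ by a derivative analogue of Proposition~\ref{prop:wavelet}: the decay of $|\theta^0_{\boldsymbol{j},\boldsymbol{k}}|$ implied by $f_0\in\mathcal{B}^{\boldsymbol{\alpha}}_{\infty,\infty}(R)$ dominates the growth $\|D^{\boldsymbol{r}}\psi_{\boldsymbol{j},\boldsymbol{k}}\|_\infty\lesssim2^{\sum_{l=1}^dj_l(1/2+r_l)}$ precisely because $\boldsymbol{\alpha}\in\mathbb{A}_{\boldsymbol{r}}$. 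The first term is a finite wavelet combination; since the CDV system is $\eta$-regular and $r_l<\eta+1$, $\|D^{\boldsymbol{r}}\varphi_{\boldsymbol{N},\boldsymbol{m}}\|_\infty\lesssim1$ and $\|D^{\boldsymbol{r}}\psi_{\boldsymbol{j},\boldsymbol{k}}\|_\infty\lesssim2^{\sum_{l=1}^dj_l(1/2+r_l)}$ are polynomial in $n$ (as $\prod_{l=1}^d2^{J_{n,l}}=\sqrt{n/\log n}$), and the number of coefficients is likewise $O(\sqrt{n/\log n})$. Hence by the triangle inequality and Cauchy--Schwarz over the coefficients it is enough to bound $\mathrm{E}_0\mathrm{E}(|\vartheta_{\boldsymbol{m}}-\vartheta^0_{\boldsymbol{m}}|^2|\boldsymbol{Y})$ and $\mathrm{E}_0\mathrm{E}(|\theta_{\boldsymbol{j},\boldsymbol{k}}-\theta^0_{\boldsymbol{j},\boldsymbol{k}}|^2|\boldsymbol{Y})$ by a fixed power of $n$ (in fact by $O(1)$). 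Conditionally on $\sigma$ and the remaining coefficients, each coefficient posterior is, up to the discretization controlled by \eqref{eq:cdf}, a spike-and-slab whose slab is $p(\cdot)$ reweighted by a Gaussian likelihood centred near the empirical wavelet coefficient of $\boldsymbol{Y}$ with variance of order $\sigma^2/n$; bounding the numerator of its second moment by $p_{\mathrm{max}}$ times a Gaussian second moment and the normalizing constant from below via \eqref{eq:density}, on the event that this centre lies in $[-R_0,R_0]$ and $\sigma$ stays in a fixed neighbourhood of $\sigma_0$ — which has $P_0$-posterior probability $1-o(n^{-c})$ for every $c$ by Gaussian tail bounds and Proposition~\ref{prop:consistent} — yields the $O(1)$ bound, the complementary event being absorbed by crude estimates. (Alternatively, such a polynomial second-moment bound already follows from the $L_2$-posterior estimates for the spike-and-slab prior developed for Theorem~\ref{th:adapt}.)

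The first paragraph is routine; the step needing care is the uniform polynomial control of the posterior second moment. Because the slab density $p$ is only assumed bounded below on the compact $[-R_0,R_0]$ and may have heavy tails (for instance a $t$-density), one must verify that the empirical wavelet coefficients — which pin down the location of the reweighting Gaussian — and the posterior draws of $\sigma$ remain in ranges on which the slab-posterior normalizing constant is bounded away from $0$, and then check that the small-probability complement, handled by a crude bound, still survives multiplication by the polynomially small exceptional probability from Theorem~\ref{th:adapt}. Since the relevant ingredients — the structure of the spike-and-slab posterior, the continuous-to-discrete approximations attached to \eqref{eq:cdf}, and consistency of the $\sigma$-posterior (Proposition~\ref{prop:consistent}) — are all already in place from the proof of Theorem~\ref{th:adapt}, I expect this step to be short in the actual write-up.
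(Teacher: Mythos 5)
Your plan starts the same way the paper does, with Jensen's inequality $\|\mathrm{E}(D^{\boldsymbol r}f|\boldsymbol Y)-D^{\boldsymbol r}f_0\|_\infty\le\mathrm{E}(\|D^{\boldsymbol r}f-D^{\boldsymbol r}f_0\|_\infty|\boldsymbol Y)$, but from there you diverge onto a Cauchy--Schwarz decomposition while the paper slices the set $\{\|D^{\boldsymbol r}f-D^{\boldsymbol r}f_0\|_\infty>M\epsilon_{n,\boldsymbol r}\}$ into shells $\mathcal F_u=\{M\epsilon_{n,\boldsymbol r}u<\|D^{\boldsymbol r}f-D^{\boldsymbol r}f_0\|_\infty\le M\epsilon_{n,\boldsymbol r}(u+1)\}$ and establishes $\mathrm E_0\Pi(\mathcal F_u|\boldsymbol Y)\le(\log n)^d\exp\{-C(\log n)u^2\}$ by re-running the proof of Theorem \ref{th:adapt} with thresholds scaled by $u$ (and by $u^2$ inside \eqref{eq:sigma2bound}). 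Your route is a natural first attempt, but it has a genuine gap.

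The gap is the claim that $\xi$ in Theorem \ref{th:adapt} ``can be made large by taking $M$ large.'' That is not available here. If you trace the exponents in the proofs of Lemmas \ref{lem:lemma1} and \ref{lem:lemma2}, the bound on $\mathrm E_0\sup_{\sigma\in\mathcal V_n}\Pi(\mathcal B^c|\boldsymbol Y,\sigma)$ contains $P_0[\Omega_n(\underline\gamma)^c]\lesssim n^{-\underline\gamma/2}(\log n)^d$ with $\underline\gamma$ forced to be \emph{small} (so this term decays slowly), and contains $n^{1/2-\mu_{\min}+o(1)+\kappa(\underline\gamma)/[2\sigma_0^2+o(1)]}$ which is bounded below by roughly $n^{-(\mu_{\min}-1/2)}$ no matter how $M$ is tuned; the bounds on $\Pi(\mathcal C^c|\cdot)$ and $\Pi(\mathcal A^c\cap\mathcal C|\cdot)$ contain the fixed-power floors $P_0[\Omega_n(1)^c]\lesssim n^{-1/2}(\log n)^d$ and $\Pi(\mathcal W_n^c|\boldsymbol Y,\sigma)=O_{P_0}(n^{-P_4})$ with $P_4$ the fixed constant from Lemma \ref{lem:l2contract}. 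So $\xi$ is capped at a fixed constant (roughly of order $\min\{\mu_{\min}-1/2,\,1/2,\,P_4\}$), while your Cauchy--Schwarz bound requires $\xi/2\ge p/2+\alpha^{*}\{1-\sum_l r_l/\alpha_l\}/(2\alpha^{*}+d)$, where $n^{p}$ bounds the posterior second moment; even in the best case $p=0$ this forces $\xi$ up to values arbitrarily close to $1$ as $\alpha^{*}$ grows, which is out of reach. The paper circumvents exactly this obstruction: by scaling the thresholds in the events $\mathcal J_n(\gamma)$, $\mathcal A$ and $\Omega_n(c)$ with $u$ (and $u^2$), the exceptional probabilities $P_0[\Omega_n(\cdot)^c]$ themselves acquire the factor $e^{-cu^2\log n}$, so there is no fixed floor; the required $u^2$-Gaussian decay then makes the series $\sum_u u\,e^{-C(\log n)u^2}$ summable and of order $n^{-C}$. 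This scaling of the event definitions is the extra idea your proposal lacks, and without it the Cauchy--Schwarz route does not close.

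A secondary issue: the second-moment bound $\mathrm E_0\mathrm E(\|D^{\boldsymbol r}f-D^{\boldsymbol r}f_0\|_\infty^2|\boldsymbol Y)\lesssim n^{p}$ is only sketched. The slab density $p(\cdot)$ is permitted to have heavy (e.g., Cauchy) tails, so the prior itself has no second moment; the tilting by the Gaussian likelihood saves finiteness, but getting a \emph{uniform} bound requires quantifying both the normalizing constant from below (which you correctly tie to the empirical coefficient lying in $[-R_0,R_0]$ and $\sigma\in\mathcal V_n$) and the contribution of the complementary event, where the ``crude estimates'' you invoke could grow polynomially in the noise excursion and must be integrated against Gaussian tails. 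The parenthetical appeal to the $L_2$-posterior estimates does not supply this either: Lemma \ref{lem:l2contract} and Corollary \ref{cor:l2contract} give an $L_2$-contraction \emph{probability} bound, not a bound on $\mathrm E(\|\boldsymbol\theta-\boldsymbol\theta_0\|^2|\boldsymbol Y)$. The paper's slicing argument sidesteps the need for any posterior moment bound entirely.
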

We note here that the $\|\cdot\|_\infty$-norm is not bounded, and hence the usual route of deriving this result from Theorem \ref{th:adapt} based on convex, bounded loss through Jensen's inequality is not applicable. To proceed, we compute the expectation on slicing of the function space and add these polynomially decaying terms to bound the expectation of interest on the entire space. For more details, see the proof at the end of Section \ref{sec:proof2}.

\begin{remark}
For the random case, we assume that $\boldsymbol{X}_1,\dotsc,\boldsymbol{X}_n\overset{i.i.d.}{\sim}U(\boldsymbol{x})$, where $U$ is the cumulative distribution function of a uniform distribution on $[0,1]^d$. Note that random design points do not satisfy \eqref{eq:cdf} because by Donsker's theorem, we will have $\sup_{\boldsymbol{x}\in[0,1]^d}|G_n(\boldsymbol{x})-U(\boldsymbol{x})|=O_P(n^{-1/2})$. However, by following similar calculations performed for the fixed design case, it turns out that we will get the exact same posterior contraction rates as in Theorem \ref{th:adapt} for random uniform designs, and the effect of the extra $\sqrt{n}$-factor shows up through a slightly larger lower bound in $\mathbb{A}_{\boldsymbol{r}}$.
\end{remark}

\subsection{Discussion on \eqref{eq:adaptive}: limits on range of adaptation}\label{sec:lower}
Theorem \ref{th:adapt} in particular shows that there is a certain lower limit in the range of smoothness that we can adapt to, and this limit is increasing with $d$ the ambient dimension. To see this point, take $\boldsymbol{r}=\boldsymbol{0}$, rearrange the lower bound in \eqref{eq:adaptive} to $1/d+1/(2\alpha^{*})>1/\alpha_l$, and sum both sides across $l=1,\dotsc,d$ to get $\alpha^{*}>d/2$. This lower limit implies that our range of adaptation shrinks and we can only adapt to smoother functions in higher dimensions.

In our approach, this limit arises when we try to reduce the regression posterior to a quasi-white noise version, by forcing the regression likelihood based on sums of squares into component-wise fashion like those encountered in white noise models. The success of this reduction depends on the truncation point $2^{\sum_{l=1}^dJ_{n,l}}$ and also the lower bound on $\alpha^{*}$ we are willing to tolerate. Now suppose $2^{\sum_{l=1}^dJ_{n,l}}=(n/\log{n})^m$ for some $m\leq1$, then Lemma \ref{lem:betabound} in Section \ref{sec:proof2} below shows that we will be able to perform this reduction and hence establish our main results in the previous section, if for all $\boldsymbol{\alpha}$,
\begin{align*}
\alpha^{*}>d\max\left\{\frac{(m-1)+\sqrt{(m-1)^2+8m^2}}{4m},\quad\frac{1}{2}\left(\frac{1}{m}-1\right)\right\}.
\end{align*}
Note that the first term inside the max operation is increasing $m$ while the second term is decreasing. Thus the optimal $m$ can be found by equating these two antagonistic terms and this will yield $m=1/2$ giving the smallest lower bound $\alpha^{*}>d/2$, which is ensured by letting $\boldsymbol{\alpha}\in\mathbb{A}_{\boldsymbol{r}}\subseteq\mathbb{A}$ in \eqref{eq:adaptive} for any $\boldsymbol{r}\geq\boldsymbol{0}$. Moreover, this also explains why we chose $2^{\sum_{l=1}^dJ_{n,l}}=\sqrt{n/\log{n}}$ as our truncation point in Section \ref{sec:prior}. From this perspective, \eqref{eq:adaptive} arises due to our method of proof.

Interestingly, such lower limit has been observed in the one-dimensional case and in other settings (see \citep{nickl2011,adaptsupnorm,sniekers,castillosupnorm}). Based on the current known literature so far, this limit appears when one tries to establish sup-norm posterior contraction rates for models beyond the Gaussian white noise, e.g., nonparametric regression and density estimation. On the other hand, if we look at frequentist procedures such as Lepski's method and wavelet thresholding, such lower limit apparently do not exists at least for regression and density estimation problems, and one can adapt the function smoothness arbitrarily close to $0$.

By taking into account both perspectives, we do not know whether this is due to artefacts of proof methods, or to some deeper reasons such as the incompatibility of fully Bayesian procedures (which is usually based on intrinsic $L_2$-metric) to the desired $L_\infty$-loss. However since the aforementioned papers and our proposed method all arrived at some lower limits, despite using different proof techniques and priors, we conjecture that the former reason is unlikely. Admittedly, this is far from conclusive and further research is needed to verify these claims.



\section{The master theorem of Bayesian nonparametrics}\label{sec:test}
In this section, we explain in detail why we have to develop a new method of deriving contraction rates by comparing regression posterior with a corresponding quasi-white noise version. In Bayesian nonparametrics, the current state-of-the-art method in calculating posterior contraction rates is the master theorem developed by \citep{ghosal2000,shen2001,converge2007}. As its name suggests, this master theorem consists of sufficient conditions that are designed to be applicable to general classes of models and prior distributions. Let $\epsilon_n$ be the minimax rate and $\Pi$ a general prior distribution, not necessarily the spike-and-slab prior considered in previous sections. In its most basic version adapted to our present regression model, these conditions are ($C_1,C_2,C_3>0$ are some constants):
\begin{enumerate}
\item The existence of a sequence of tests $\phi_n$ for the hypotheses $H_0:f=f_0$ against $H_1:f\in\{\mathcal{F}_n:\|f-f_0\|_\infty>M\epsilon_n\}$ with $\mathcal{F}_n$ some appropriately chosen sequence of sieve sets of the function parameter space, such that its Type I error goes to $0$ while its Type II error decreases like $e^{-C_1n\epsilon_n^2}$,
\item The prior $\Pi$ puts at least $e^{-C_2n\epsilon_n^2}$ mass on certain Kullback-Leibler neighborhoods around $f_0$ of radius $\epsilon_n$,
\item The prior $\Pi$ puts most of its mass in the sieve sets such that $\Pi(\mathcal{F}_n^c)\leq e^{-C_3n\epsilon_n^2}$.
\end{enumerate}
Recently however, research in this area has discovered cases that do not fall within the scope of this master theorem. In our context of $L_\infty$-contraction, the works by \citep{nickl2011} and \citep{adaptsupnorm} found that the master theorem, which corresponds to verifying the 3 conditions above, can produce suboptimal contraction rates. In the following, we will investigate this issue in more depth and give a more thorough explanation in higher dimensions.

Throughout this section, we take $\sigma=\sigma_0$ to be known since it does not play a role in explaining this suboptimality issue, this is done to streamline the proofs and help readers better understand the cause of this problem, which is driven by the nonparametric part i.e., the regression function $f$ of the model.

The root of this problem is the first testing criterion, which requires us to find some sequence of test functions $\phi_n$ with exponentially decreasing Type II errors, or more precisely, $\sup_{f\in\mathcal{F}_n:\|f-f_0\|_\infty>M\epsilon_n}\mathrm{E}_f(1-\phi_n)\leq e^{-C_1n\epsilon_n^2}$. However, the proposition below shows that for any test, there exists a function under $H_1$ with Type II error that decreases at least polynomially in $n$. Therefore one cannot achieve $e^{-C_1n\epsilon_n^2}$, or exponential-type decrease in general if the null and alternative are separated apart by (a constant multiple of) $\epsilon_n$ in sup-norm.
\begin{proposition}\label{prop:test}
Let $\epsilon_n=(n/\log{n})^{-\alpha^{*}/(2\alpha^{*}+d)}$. Consider the hypotheses $H_0:f=f_0$ against $H_1:f\in\{\mathcal{B}^{\boldsymbol{\alpha}}_{\infty,\infty}(R):\|f-f_0\|_\infty>M\epsilon_n\}$ with $f_0\in\mathcal{B}^{\boldsymbol{\alpha}}_{\infty,\infty}(R)$ for any $\boldsymbol{\alpha}\in\mathbb{A}$. Let $\phi_n(\boldsymbol{X}_1,\dotsc,\boldsymbol{X}_n;f_0)\rightarrow\{0,1\}$ be any test function for this problem such that $\mathrm{E}_0\phi_n\rightarrow0$, then there exists a constant $Q>0$ such that
\begin{align}\label{eq:test}
\sup_{f\in\mathcal{B}^{\boldsymbol{\alpha}}_{\infty,\infty}(R):\|f-f_0\|_\infty>M\epsilon_n}\mathrm{E}_f(1-\phi_n)\gtrsim n^{-Q}.
\end{align}
\end{proposition}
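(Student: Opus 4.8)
The plan is to establish the lower bound on the worst-case Type II error by exhibiting a rich, well-separated family of alternatives within $\mathcal{B}^{\boldsymbol{\alpha}}_{\infty,\infty}(R)$ against which no test can succeed. The standard device here is a two-point (or many-point) reduction à la Ingster--Suslina, combined with a change-of-measure (Neyman--Pearson / likelihood-ratio) argument. First I would construct, for each resolution level $\boldsymbol{j}$ near the critical level $2^{\sum_l j_l}\asymp(n/\log n)^{d/(2\alpha^*+d)}$, a collection of perturbed functions of the form $f_{\boldsymbol{j},\boldsymbol{k}}=f_0+\rho_n\psi_{\boldsymbol{j},\boldsymbol{k}}$, where $\rho_n$ is calibrated so that (i) the perturbation keeps $f_{\boldsymbol{j},\boldsymbol{k}}$ inside the Besov ball of radius $R$ — this is where the constraint $R\le R_0-1/2$ and $\boldsymbol{\alpha}\in\mathbb{A}$ enter, via the Besov norm characterization in \eqref{eq:besov} — and (ii) $\|f_{\boldsymbol{j},\boldsymbol{k}}-f_0\|_\infty\ge M\epsilon_n$, which because $\|\psi_{\boldsymbol{j},\boldsymbol{k}}\|_\infty\asymp 2^{\sum_l j_l/2}$ forces $\rho_n\asymp 2^{-\sum_l j_l/2}\epsilon_n$. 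The single-coefficient perturbation is the natural candidate precisely because in sup-norm a localized bump of height $\epsilon_n$ only costs an $\ell_2$-energy of order $2^{-\sum_l j_l/2}\epsilon_n$, which is far smaller than the full $\epsilon_n$-energy that an $L_2$-separated alternative would require — this energy deficit is exactly what produces the polynomial rather than exponential Type II error.

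Next I would bound the Type II error from below using the likelihood ratio. Under the regression model \eqref{eq:model} with Gaussian errors, the log-likelihood ratio between $P_{f_{\boldsymbol{j},\boldsymbol{k}}}$ and $P_{f_0}$ is, up to the near-orthogonality corrections coming from the design not being exactly uniform (controlled by \eqref{eq:cdf} and the discrete-to-continuous approximation lemmas in Section \ref{sec:proof3}), a Gaussian with variance of order $n\rho_n^2\|\psi_{\boldsymbol{j},\boldsymbol{k}}\|_2^2/\sigma_0^2\asymp n\rho_n^2/\sigma_0^2\asymp n\,2^{-\sum_l j_l}\epsilon_n^2$. Plugging in $2^{\sum_l j_l}\asymp(n/\log n)^{d/(2\alpha^*+d)}$ and $\epsilon_n^2=(n/\log n)^{-2\alpha^*/(2\alpha^*+d)}$ gives $n\rho_n^2\asymp n\cdot(n/\log n)^{-(d+2\alpha^*)/(2\alpha^*+d)}=\log n$, so the chi-square divergence is of order $(\log n)^{\mathrm{const}}$ and the total-variation distance between $P_{f_0}$ and the mixture $\bar P=|\mathcal{K}_n|^{-1}\sum_{\boldsymbol{k}}P_{f_{\boldsymbol{j},\boldsymbol{k}}}$ (over the $|\mathcal{K}_n|\asymp 2^{\sum_l j_l}$ translates $\boldsymbol{k}$) is bounded away from $1$: averaging over $\boldsymbol{k}$ kills the linear Gaussian term and leaves only the square-exponential term $\exp(n\rho_n^2/\sigma_0^2)/|\mathcal{K}_n|\asymp n^{\mathrm{poly}}/n^{\mathrm{poly}}$, which one arranges to be $O(1)$. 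Then a standard Le Cam / Fano-type argument — $\sup_f \mathrm{E}_f(1-\phi_n)\ge \frac{1}{|\mathcal{K}_n|}\sum_{\boldsymbol{k}}\mathrm{E}_{f_{\boldsymbol{j},\boldsymbol{k}}}(1-\phi_n)\ge 1-\mathrm{E}_0\phi_n-\|\bar P-P_{f_0}\|_{\mathrm{TV}}$ and a more refined version accounting for the $\log n$-size divergence — yields the polynomial lower bound $n^{-Q}$ for an explicit $Q$ depending on $\boldsymbol{\alpha}$, $d$, and $\sigma_0$. The extra $\sqrt{\log n}$-type slack in $\epsilon_n$ (rather than the pure $n^{-\alpha^*/(2\alpha^*+d)}$ rate) is what lets the calibration come out cleanly with only polynomial, not constant, error.

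The main obstacle I anticipate is twofold. First, the careful bookkeeping needed to keep $f_{\boldsymbol{j},\boldsymbol{k}}\in\mathcal{B}^{\boldsymbol{\alpha}}_{\infty,\infty}(R)$ uniformly: one must choose the perturbation level $\boldsymbol{j}$ not too high (else the Besov norm of the bump, weighted by $2^{\sum_l\alpha_l j_l(1/d+1/(2\alpha^*))}$ in the $p=q=\infty$ case of \eqref{eq:besov}, blows up) and not too low (else $\rho_n$ is too small to achieve $M\epsilon_n$ separation in sup-norm), and showing that the window of admissible $\boldsymbol{j}$ is nonempty is precisely the role played by the condition $\boldsymbol{\alpha}\in\mathbb{A}$ together with the slack $R\le R_0-1/2$. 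Second, handling the fact that the design is not exactly uniform: the cross-terms $\langle\psi_{\boldsymbol{j},\boldsymbol{k}},\psi_{\boldsymbol{j},\boldsymbol{k}'}\rangle_n$ under the empirical measure are not exactly $\delta_{\boldsymbol{k}\boldsymbol{k}'}$, so the "averaging kills the linear term" step acquires error terms that must be shown to be lower order using \eqref{eq:cdf}; since these errors are $O(1/n)$ per inner product and the level has $2^{\sum_l j_l}\ll\sqrt{n}$ wavelets, the aggregate perturbation is controllable, but it requires the continuous-approximation lemmas from Section \ref{sec:proof3}. Everything else — the Gaussian likelihood-ratio computation, the Le Cam bound — is routine once the family is correctly calibrated.
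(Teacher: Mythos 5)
Your construction of the alternative is essentially the same as the paper's: a single-coefficient wavelet perturbation of $f_0$ at the critical resolution $2^{\sum_l j_l}\asymp(n/\log n)^{d/(2\alpha^*+d)}$, with amplitude $\rho_n\asymp 2^{-\sum_l j_l/2}\epsilon_n$ calibrated so that $\|f_{\boldsymbol{j},\boldsymbol{k}}-f_0\|_\infty\asymp\epsilon_n$ while $n\|f_{\boldsymbol{j},\boldsymbol{k}}-f_0\|_n^2\asymp n\rho_n^2\asymp\log n$. That calculation and the attendant ``energy deficit'' intuition are correct and are the heart of the paper's argument as well; the paper packages the choice as a $\boldsymbol{\theta}^*$ differing from $\boldsymbol{\theta}_0$ in exactly one coefficient at level $\boldsymbol{J}_n(\boldsymbol{\alpha})$, and the continuous-approximation lemmas are indeed what make $\|\cdot\|_n\asymp\|\cdot\|_2$ up to $O(d/\sqrt{n})$.

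Where you part ways from the paper, and where the gap lies, is in how you pass from this construction to a lower bound on the Type II error. The paper avoids mixtures entirely: it compares $P_{f_0}$ to the law under a \emph{single} alternative $g$, applies Cauchy--Schwarz to the change of measure, and uses that the Neyman--Pearson test dominates $\phi_n$, arriving at
\begin{align*}
\mathrm{E}_g(1-\phi_n)\ \ge\ [\mathrm{E}_{f_0}(1-\phi_{\mathrm{LR}})]^2\,\exp\!\left\{-\,\frac{n\|g-f_0\|_n^2}{\sigma_0^2}\right\}\ \gtrsim\ n^{-Q},
\end{align*}
the last step because $n\|g-f_0\|_n^2\lesssim\log n$. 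This works for every fixed $M>0$, however large, with $Q$ growing in $M$. Your route instead averages $|\mathcal{K}_n|\asymp 2^{\sum_l j_l}$ translates into a mixture $\bar P$ and invokes Le Cam's inequality $\sup_f\mathrm{E}_f(1-\phi_n)\ge 1-\mathrm{E}_0\phi_n-\|\bar P-P_{f_0}\|_{\mathrm{TV}}$, with the TV controlled through the $\chi^2$ divergence of $\bar P$. But with $n\rho_n^2\asymp M^2\log n$ the diagonal term in $\chi^2(\bar P,P_{f_0})$ is $e^{n\rho_n^2/\sigma_0^2}/|\mathcal{K}_n|\asymp n^{cM^2/\sigma_0^2-d/(2\alpha^*+d)}$, and the step ``which one arranges to be $O(1)$'' is only possible when $M$ is small enough. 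Once $cM^2/\sigma_0^2>d/(2\alpha^*+d)$ (recall $d/(2\alpha^*+d)<1/2$ on $\mathbb{A}$), the mixture $\chi^2$ blows up polynomially, the inequality $\mathrm{TV}\le\tfrac12\sqrt{\chi^2}$ becomes vacuous, and Le Cam's bound delivers nothing. Since the proposition must hold for every fixed $M>0$ with $Q$ allowed to depend on $M$, this is a real hole, not a constant-chasing nuisance. The two-point change-of-measure sidesteps it completely: the exponential factor degrades gracefully from a constant to a polynomial as $M$ grows, which is precisely the conclusion wanted. More generally, the Ingster--Suslina/Le Cam mixing machinery is the right tool when you need the testing error bounded away from zero; here only a polynomial lower bound is claimed, and the bare two-point argument is both strictly simpler and sufficient. (Small additional note: $R\le R_0-1/2$ plays no role here --- that constraint concerns the slab density support, not the existence of the alternative; all that is needed is room to place the bump in $\mathcal{B}^{\boldsymbol{\alpha}}_{\infty,\infty}(R)$, which the critical-level sizing guarantees.)
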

As the likelihood ratio test is uniformly most powerful, its Type II error lower bounds those from any other tests. Since it is based on the $L_2$-metric due to Gaussian distributed observations, we can always find functions under $H_1$ such that they are far from $f_0$ in $L_\infty$-norm (at least $M\epsilon_n)$, but are close to $f_0$ (within $\sqrt{\log/n}$) when the same distance is measured using the intrinsic $L_2$-norm of the likelihood ratio test. This discrepancy in measured distance caused by these functions is what give rise to polynomially decreasing Type II errors (see Section \ref{sec:testproof} for a complete proof). In \citep{adaptsupnorm}, the authors reached similar conclusions in a different way through minimax theory, by formulating the aforementioned discrepancy into a modulus of continuity relating the intrinsic $L_2$-norm with the desired $L_\infty$-distance.

Polynomial rates are not unique to testing problems with $L_\infty$-separation in the alternative hypothesis. In fact, posterior probabilities on shrinking $L_\infty$ or point-wise $\epsilon_n$-neighborhoods around $f_0$ tend to $0$ at polynomial rates (up to a logarithmic factor), e.g., see Theorem \ref{th:adapt} and Lemmas \ref{lem:lemma1}, \ref{lem:lemma2}. Exponential rates are only possible for weaker losses, with decay of the type $e^{-C_1n\epsilon_n^2}$ corresponding to the $L_2$-loss and its equivalent metrics.

The previous proposition implies that for exponential error tests to exist, the null and the alternative hypotheses must be further separated in $L_\infty$-norm. To that end, let us introduce a separation factor $r_n\to\infty$ as $n\to\infty$ in the alternative $H_1:f\in\{\mathcal{B}^{\boldsymbol{\alpha}}_{\infty,\infty}(R):\|f-f_0\|_\infty>Mr_n\epsilon_n\}$. It is then instructive to ask how large $r_n$ should be so that tests with exponential Type II error start to exist. The following proposition says that $r_n$ must be greater than $\epsilon_n^{-d/(2\alpha^{*}+d)}$, and this increase in separation results in the contraction rate being inflated by the same factor.

\begin{proposition}\label{prop:testing}
For $\epsilon_n=(n/\log{n})^{-\alpha^{*}/(2\alpha^{*}+d)}$, let $\rho_n=\epsilon_n^{-d/(2\alpha^{*}+d)}$ and consider the hypotheses $H_0:f=f_0$ versus $H_1:f\in\{\mathcal{B}^{\boldsymbol{\alpha}}_{\infty,\infty}(R):\|f-f_0\|_\infty>Mr_n\epsilon_n\}$ for any $f_0\in\mathcal{B}^{\boldsymbol{\alpha}}_{\infty,\infty}(R)$ with $\boldsymbol{\alpha}\in\mathbb{A}$. Then for all $r_n=o(\rho_n)$, we have
\begin{align*}
\sup_{f\in\mathcal{B}^{\boldsymbol{\alpha}}_{\infty,\infty}(R):\|f-f_0\|_\infty>Mr_n\epsilon_n}\mathrm{E}_f(1-\phi_n)\gtrsim n^{-Q},
\end{align*}
for any test $\phi_n$ with $\mathrm{E}_0\phi_n\to0$ and some constant $Q>0$. However for $r_n=\rho_n$, there exists a test $\Phi_n$ such that for some constants $C_I,C_{II}>0$,
\begin{align*}
\mathrm{E}_0\Phi_n\leq e^{-C_In\epsilon_n^2},\qquad\sup_{f\in\mathcal{B}^{\boldsymbol{\alpha}}_{\infty,\infty}(R):\|f-f_0\|_\infty>M\rho_n\epsilon_n}\mathrm{E}_f(1-\Phi_n)\leq e^{-C_{II}n\epsilon_n^2}.
\end{align*}
Consequently, if we used the master theorem to prove the first assertion of Theorem \ref{th:adapt}, then there exists a constant $M>0$ such that as $n\rightarrow\infty$,
\begin{align*}
\sup_{\boldsymbol{\alpha}\in\mathbb{A}}\sup_{f_0\in\mathcal{B}^{\boldsymbol{\alpha}}_{\infty,\infty}(R)}\mathrm{E}_0\Pi(\|f-f_0\|_{\infty}>M\epsilon_n^{2\alpha^{*}/(2\alpha^{*}+d)}|\boldsymbol{Y})\rightarrow0.
\end{align*}
\end{proposition}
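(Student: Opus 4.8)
The plan is to prove the three displayed assertions in turn: the test at $r_n=\rho_n$ is built explicitly, the impossibility for $r_n=o(\rho_n)$ is obtained by a Le Cam mixture (``fuzzy hypothesis'') argument, and the contraction statement is then read off from the general form of the master theorem with the testing separation decoupled from the prior rate.

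\emph{Impossibility for $r_n=o(\rho_n)$.} With $\sigma=\sigma_0$ fixed and Gaussian errors, for any probability measure $\pi_n$ concentrated on $H_1:=\{f\in\mathcal B^{\boldsymbol\alpha}_{\infty,\infty}(R):\|f-f_0\|_\infty>Mr_n\epsilon_n\}$, writing $\bar P_{1,n}=\int P_f^n\,d\pi_n(f)$, every test $\phi_n$ satisfies $\sup_{f\in H_1}\mathrm E_f(1-\phi_n)\ge\int\mathrm E_f(1-\phi_n)\,d\pi_n(f)\ge 1-\mathrm E_0\phi_n-\|\bar P_{1,n}-P_{f_0}^n\|_{TV}$; moreover the regression model gives the explicit identity $\chi^2(\bar P_{1,n}\|P_{f_0}^n)+1=\iint\exp\{\sigma_0^{-2}\sum_{i=1}^n(f-f_0)(\boldsymbol X_i)(g-f_0)(\boldsymbol X_i)\}\,d\pi_n(f)\,d\pi_n(g)$, and $1-\|\bar P_{1,n}-P_{f_0}^n\|_{TV}\ge\{2(1+\chi^2(\bar P_{1,n}\|P_{f_0}^n))\}^{-1}$, so it suffices to build $\pi_n$ with $\chi^2=O(1)$. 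I would take $\pi_n$ to be the law of $f_0+\delta_n\psi_{\boldsymbol j^\ast,K}$ with $K$ uniform over the $2^{|\boldsymbol j^\ast|}$ translates at an appropriate resolution level $\boldsymbol j^\ast$ and $\delta_n$ on the anisotropic Besov threshold $\asymp 2^{-\sum_l\alpha_lj_l^\ast(1/d+1/(2\alpha^\ast))}$, so that $\|f-f_0\|_\infty\asymp\delta_n2^{|\boldsymbol j^\ast|/2}$ stays above $Mr_n\epsilon_n$; since $2^{|\boldsymbol j^\ast|}=o(\sqrt n)$ for $\boldsymbol\alpha\in\mathbb A$, the continuous-approximation lemmas of Section \ref{sec:proof3} yield $\langle\psi_{\boldsymbol j^\ast,k},\psi_{\boldsymbol j^\ast,k'}\rangle_n=\mathbbm 1_{k=k'}+o(1)$ uniformly, hence $\chi^2(\bar P_{1,n}\|P_{f_0}^n)\asymp 2^{-|\boldsymbol j^\ast|}(e^{cn\delta_n^2/\sigma_0^2}-1)$ up to negligible discretization terms. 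Balancing the three competing requirements on $\boldsymbol j^\ast$ — membership in $\mathcal B^{\boldsymbol\alpha}_{\infty,\infty}(R)$, $\|f-f_0\|_\infty>Mr_n\epsilon_n$, and $\chi^2=O(1)$ — is exactly the anisotropic modulus-of-continuity optimization relating the $\|\cdot\|_\infty$-separation to the $\|\cdot\|_2$-information distance (cf.\ \citep{nickl2011,adaptsupnorm}); it goes through for $r_n=o(\rho_n)$ with $\rho_n=\epsilon_n^{-d/(2\alpha^\ast+d)}$, giving $\sup_{f\in H_1}\mathrm E_f(1-\phi_n)\gtrsim 1\gtrsim n^{-Q}$. (When $r_n$ is so small that $2^{|\boldsymbol j^\ast|}$ exceeds $n$ one can dispense with the mixture: a single $\psi_{\boldsymbol j^\ast,\boldsymbol k^\ast}$ whose support contains no design point gives $P_f^n=P_{f_0}^n$ exactly. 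Alternatively, one may first pass to the quasi-white-noise model of Section \ref{sec:proof2} and quote the white-noise lower bound of \citep{adaptsupnorm}.)

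\emph{The test at $r_n=\rho_n$.} Let $\widehat f_{\boldsymbol W}$ be the linear wavelet-projection estimator at the anisotropic level $\boldsymbol W$ with $\sum_l2^{-\alpha_lW_l}\asymp\rho_n\epsilon_n$, equivalently $2^{|\boldsymbol W|}\asymp(\rho_n\epsilon_n)^{-d/\alpha^\ast}$ (admissible because $\boldsymbol\alpha$ is fixed here), and put $\Phi_n=\mathbbm 1\{\|\widehat f_{\boldsymbol W}-f_0\|_\infty>\tfrac12 M\rho_n\epsilon_n\}$. Under $P_{f_0}$, $\|\widehat f_{\boldsymbol W}-f_0\|_\infty\le\|K_{\boldsymbol W}(f_0)-f_0\|_\infty+\|\widehat f_{\boldsymbol W}-\mathrm E_0\widehat f_{\boldsymbol W}\|_\infty+o(\rho_n\epsilon_n)$, the first term bounded by Proposition \ref{prop:wavelet} and the $o(\cdot)$ (design-induced bias) by Section \ref{sec:proof3}, while the middle term is a maximum of $\asymp 2^{|\boldsymbol W|}$ mean-zero Gaussians each of variance $\lesssim\sigma_0^22^{|\boldsymbol W|}/n$; a union bound with the Gaussian tail gives $P_{f_0}(\|\widehat f_{\boldsymbol W}-f_0\|_\infty>t)\le 2^{|\boldsymbol W|+1}\exp\{-cnt^2/(\sigma_0^22^{|\boldsymbol W|})\}$ for $t$ past the mean, and at $t=\tfrac12 M\rho_n\epsilon_n$ one has $nt^2/2^{|\boldsymbol W|}\asymp M^2n(\rho_n\epsilon_n)^{2+d/\alpha^\ast}=M^2n\epsilon_n^2$ since $(\rho_n\epsilon_n)^{2+d/\alpha^\ast}=\epsilon_n^2$; hence $\mathrm E_0\Phi_n\le e^{-C_In\epsilon_n^2}$. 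If instead $\|f-f_0\|_\infty>M\rho_n\epsilon_n$ with $f\in\mathcal B^{\boldsymbol\alpha}_{\infty,\infty}(R)$, then $\|\widehat f_{\boldsymbol W}-f_0\|_\infty\ge\|f-f_0\|_\infty-\|K_{\boldsymbol W}(f)-f\|_\infty-\|\widehat f_{\boldsymbol W}-\mathrm E_f\widehat f_{\boldsymbol W}\|_\infty-o(\rho_n\epsilon_n)$, and since $\|K_{\boldsymbol W}(f)-f\|_\infty\le CR\rho_n\epsilon_n$ by Proposition \ref{prop:wavelet}, for $M$ large $\Phi_n$ fails only if the same Gaussian maximum exceeds $\tfrac14 M\rho_n\epsilon_n$, so the identical concentration bound yields $\sup_{\|f-f_0\|_\infty>M\rho_n\epsilon_n}\mathrm E_f(1-\Phi_n)\le e^{-C_{II}n\epsilon_n^2}$.

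\emph{The contraction consequence.} Now invoke the contraction theorem of \citep{ghosal2000,converge2007} in the form that allows the testing separation to exceed the prior rate: if the prior charges Kullback--Leibler neighborhoods of $f_0$ of radius $\epsilon_n=(n/\log n)^{-\alpha^\ast/(2\alpha^\ast+d)}$ with mass $\ge e^{-c_1n\epsilon_n^2}$, if a sieve $\mathcal F_n$ satisfies $\Pi(\mathcal F_n^c)\le e^{-(c_1+4)n\epsilon_n^2}$, and if some test $\Phi_n$ has $\mathrm E_0\Phi_n\le e^{-(c_1+4)n\epsilon_n^2}$ and $\sup\{\mathrm E_f(1-\Phi_n):f\in\mathcal F_n,\ \|f-f_0\|_\infty>M\rho_n\epsilon_n\}\le e^{-(c_1+4)n\epsilon_n^2}$, then $\mathrm E_0\Pi(\|f-f_0\|_\infty>M\rho_n\epsilon_n\mid\boldsymbol Y)\to 0$. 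For the spike-and-slab prior \eqref{eq:prior} the first two conditions hold with $\mathcal F_n$ the set of truncated expansions carrying $O(n\epsilon_n^2)$ nonzero mother coefficients — these are precisely the prior-mass and sieve estimates underpinning the $L_2$-contraction analysis of Section \ref{sec:proof} — and the test is the one just built, with $M$ enlarged so that its error exponents (which are $\ge cM^2n\epsilon_n^2$) beat $(c_1+4)n\epsilon_n^2$, all constants being uniform over $\boldsymbol\alpha\in\mathbb A$ and $f_0\in\mathcal B^{\boldsymbol\alpha}_{\infty,\infty}(R)$. Since $\rho_n\epsilon_n=\epsilon_n^{2\alpha^\ast/(2\alpha^\ast+d)}$ this is the claimed display, and by the first part no test with vanishing Type~I error has even polynomially small Type~II error at separation $o(\rho_n\epsilon_n)$, so the master theorem genuinely cannot be run at a faster rate. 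The hard part is the lower bound: fitting the $\chi^2$-budget argument to the merely approximately uniform design of \eqref{eq:cdf}, so that the discretization errors in the empirical inner products stay well inside the $O(1)$ tolerance, while simultaneously carrying out the anisotropic modulus optimization accurately enough that the critical separation comes out exactly as $\rho_n\epsilon_n$.
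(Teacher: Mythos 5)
Your overall architecture matches the paper's: a change-of-measure lower bound for $r_n=o(\rho_n)$, a plug-in least-squares test with exponential errors at $r_n=\rho_n$ (your truncation $\sum_l2^{-\alpha_l W_l}\asymp\rho_n\epsilon_n$ coincides with the paper's $2^{h_{n,l}(\boldsymbol\alpha)}=\epsilon_n^{-2\alpha^*/\{\alpha_l(2\alpha^*+d)\}}$, and your union-bound concentration plays the role of the paper's Lemma~\ref{lem:type2} via Borell), and an appeal to the master theorem. The test and contraction steps are essentially the paper's proof.

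The genuine gap is in the impossibility part. You build a uniform mixture over translates $\psi_{\boldsymbol{j}^*,K}$ with $\delta_n$ taken \emph{at} the Besov threshold $\asymp 2^{-\sum_l\alpha_lj_l^*(1/d+1/(2\alpha^*))}$, and assert that ``$\chi^2=O(1)$ goes through for $r_n=o(\rho_n)$''. This is not true when $r_n\to\infty$. With $\delta_n$ at the Besov threshold and $j_l^*\asymp K/\alpha_l$, the separation is $\delta_n 2^{|\boldsymbol{j}^*|/2}\asymp 2^{-K}$, and matching this to $Mr_n\epsilon_n$ forces $n\delta_n^2\asymp n(r_n\epsilon_n)^{(2\alpha^*+d)/\alpha^*}=r_n^{(2\alpha^*+d)/\alpha^*}\log n$, while $|\boldsymbol{j}^*|\log 2\lesssim\log n$; so $\chi^2(\bar P_{1,n}\|P_{f_0}^n)\asymp 2^{-|\boldsymbol{j}^*|}e^{n\delta_n^2/\sigma_0^2}$ has exponent of order $r_n^{(2\alpha^*+d)/\alpha^*}\log n$ and diverges faster than any fixed power of $n$ as soon as $r_n\to\infty$. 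The total-variation inequality then yields only $\gtrsim\exp\{-cr_n^{(2\alpha^*+d)/\alpha^*}\log n\}$, not $\gtrsim 1$. That weaker bound still proves the proposition in the intended sense (it beats $e^{-Cn\epsilon_n^2}$ exactly when $r_n=o(\rho_n)$), but you must state the argument with $\chi^2$ allowed to grow, not with $\chi^2=O(1)$; as written the displayed ``$\gtrsim 1$'' is false for unbounded $r_n$. By contrast, the paper sidesteps the mixture entirely: it reuses the two-point Cauchy--Schwarz bound from Proposition~\ref{prop:test}, $\mathrm{E}_g(1-\phi_n)\gtrsim\exp\{-Cn\|g-f_0\|_n^2/\sigma_0^2\}$, and produces a single perturbation $\boldsymbol\theta^*$ at a tunable level $\boldsymbol i_n$ with amplitude \emph{strictly below} the Besov threshold (the extra exponent factor $1+d/(2\alpha^*)$), keeping $\|\boldsymbol\theta^*-\boldsymbol\theta_0\|\lesssim\sqrt{\log n/n}$ while pushing the $\mathcal B^{\mathbf 0}_{\infty,\infty}$-separation toward $\rho_n\epsilon_n$. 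This is logically simpler than the mixture (no chi-squared computation, no control of cross terms over the uneven design) and directly delivers the $n^{-Q}$ form of the conclusion. If you want to keep the mixture, either drop the ``$\chi^2=O(1)$'' requirement and quantify the polynomial growth, or depress $\delta_n$ below the Besov threshold as the paper does so that the $L_2$-budget stays $O(\log n/n)$ uniformly in $r_n$.
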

The proof of this proposition is given in Section \ref{sec:testproof}. Since $\epsilon_n^{2\alpha^{*}/(2\alpha^{*}+d)}\gg\epsilon_n$ and $\epsilon_n$ is the optimal contraction rate given in Theorem \ref{th:adapt}, we incur an extra polynomial factor by utilizing the master theorem. Here we use plug-in test in the form of $\Phi_n=\mathbbm{1}\{\|\widehat{f}_n-f_0\|_\infty\gtrsim\rho_n\epsilon_n\}$ where $\widehat{f}_n$ is the least squares estimator of $f_0$, and exponential Type I and II errors are established using techniques of concentration inequalities introduced by \citep{nickl2011}, with Talagrand's inequality replaced by the Borell's inequality. In \citep{nickl2011} and \citep{adaptsupnorm}, the authors obtained suboptimal rate of the form $\epsilon_n^{1-d/(2\alpha^{*})}$ when generalized to $d$-dimensions, and this is strictly greater than our suboptimal rate $\epsilon_n^{2\alpha^{*}/(2\alpha^{*}+d)}$. The reason for this is that the authors in the aforementioned papers used the truncation point $2^{J_{n,l}(\boldsymbol{\alpha})}=\epsilon_n^{-1/\alpha_l}$ in their calculations while we used a slightly smaller truncation $2^{h_{n,l}(\boldsymbol{\alpha})}=\epsilon_n^{-2\alpha^{*}/\{\alpha_l(2\alpha^{*}+d)\}}$ to construct $\Phi_n$. The former balances the variance and bias when there is no separation factor in the rate, while the latter balances these two quantities at the presence of $\rho_n$, and hence gives a slightly better but albeit suboptimal rate.

Regardless of these suboptimal rates, the preceding discussion and results show the fundamental limitation of the master theorem, and in particular, the method of using tests with exponential errors. In the context of sup-norm posterior contraction, this further suggests that we need alternative methods of proof to get the correct adaptive rates. It is with these thoughts in mind that we developed the technique of reducing regression posterior to its quasi-white noise version, which we will describe in detail in Section \ref{sec:proof2} below. As this technique depends on our ability to isolate coefficients and control the basis Gram matrix, it will also hold for a wider class of basis functions such as Fourier or B-splines and also for other norms such as $L_2$. Moreover, as many models in statistics can be related to the Gaussian white noise model through asymptotic equivalence theory, this new technique should be applicable to more complex problems such as density estimation. We believe that this technique will provide statisticians a powerful tool to prove posterior contraction rates in a simpler setting and translate results back to more realistic models, and hence it would be interesting to explore these extensions in future research.

\section{Proofs}\label{sec:proof}
\subsection{Proof of main results up to Section \ref{sec:lower}}\label{sec:proof1}

\begin{proof}[Proof of Proposition \ref{prop:wavelet}]
An $\boldsymbol{m}=(m_1,\dotsc,m_d)^T$-order tensor product polynomial is a linear combination of $\{x_1^{i_1-1}\cdots x_d^{i_d-1}\}$ for $1\leq i_l\leq m_l,l=1,\dotsc,d$. Recall the wavelet projection operator $K_{\boldsymbol{W}}(f)(\boldsymbol{x})=\int K_{\boldsymbol{W}}(\boldsymbol{x},\boldsymbol{y})f(\boldsymbol{y})d\boldsymbol{y}$ with $K_{\boldsymbol{W}}(\boldsymbol{x},\boldsymbol{y})=\sum_{\boldsymbol{m}}\varphi_{\boldsymbol{W},\boldsymbol{m}}(\boldsymbol{x})\varphi_{\boldsymbol{W},\boldsymbol{m}}(\boldsymbol{y})$. We will be using two important properties of $K_{\boldsymbol{W}}$. The first is
\begin{align}\label{eq:wavelet1}
K_{\boldsymbol{W}}(P)=P,
\end{align}
for any tensor product polynomial $P$ with order less than or equal to $(\eta+1,\dotsc,\eta+1)^T$, the regularity of wavelet used at each dimension (see Theorem 4 of \citep{waveletyves}). The second is
\begin{align}\label{eq:wavelet2}
\|K_{\boldsymbol{W}}(f)\|_p\leq C_1\|f\|_p,
\end{align}
for some constant $C_1>0$ and for any $f\in L_p$. This inequality follows from using the argument discussed in Section 3.1.1 of \citep{nickl2011}. As a result, $K_{\boldsymbol{W}}$ is bounded on $L_p$ and it reproduces polynomials.

Define hypercubes $\mathcal{I}_{\boldsymbol{k}}=\prod_{l=1}^d[k_l2^{-W_l},(k_l+1)2^{-W_l}]$ for $0\leq k_l\leq2^{W_l}-1$ and note that the unit cube $[0,1]^d$ is the sum of these smaller cubes over all $\boldsymbol{k}$. Let $f|_{\mathcal{I}_{\boldsymbol{k}}}$ be the restriction of $f$ onto $\mathcal{I}_{\boldsymbol{k}}$. By Theorem 13.18 of \citep{lschumaker}, we know that there exists a tensor product Taylor's polynomial $p_{\boldsymbol{k}}$ such that
\begin{align*}
\|(g-p_{\boldsymbol{k}})|_{\mathcal{I}_{\boldsymbol{k}}}\|_p\leq C_2\sum_{l=1}^d2^{-\alpha_lW_l}\left\|\frac{\partial^{\alpha_l}}{\partial x_l^{\alpha_l}}g\middle|_{\mathcal{I}_{\boldsymbol{k}}}\right\|_p,
\end{align*}
for some constant $C_2>0$. Then using \eqref{eq:wavelet1}, \eqref{eq:wavelet2} and the triangle inequality,
\begin{align*}
\|\left[K_{\boldsymbol{W}}(g)-g\right]|_{\mathcal{I}_{\boldsymbol{k}}}\|_p&\leq\|(g-p_{\boldsymbol{k}})|_{\mathcal{I}_{\boldsymbol{k}}}\|_p+\|K_{\boldsymbol{W}}(g-p_{\boldsymbol{k}})|_{\mathcal{I}_{\boldsymbol{k}}}\|_p\\
&\lesssim\|(g-p_{\boldsymbol{k}})|_{\mathcal{I}_{\boldsymbol{k}}}\|_p\leq C\sum_{l=1}^d2^{-\alpha_lW_l}\left\|\frac{\partial^{\alpha_l}}{\partial x_l^{\alpha_l}}g\middle|_{\mathcal{I}_{\boldsymbol{k}}}\right\|_p,
\end{align*}
for some constant $C>0$ depending on the wavelets used. The result follows by summing both sides over $0\leq k_l\leq2^{W_l}-1,l=1,\dotsc,d$ and applying Proposition 4.3.8 of \citep{nickl2016}, in view of \eqref{eq:wavelet2}.
\end{proof}

\begin{proof}[Proof of Proposition \ref{prop:consistent}]
The result is a consequence of Lemma \ref{lem:l2contract}, where the posterior of $\sigma$ contracts to $\sigma_0$ at rate $(n/\log{n})^{-\alpha^{*}/(2\alpha^{*}+d)}$.
\end{proof}

For Theorem \ref{th:adapt}, we will only prove the mixed partial derivatives case, as regression function is a special case by setting $\boldsymbol{r}=\boldsymbol{0}$ and interpreting $D^{\boldsymbol{0}}f\equiv f$. The proof is a multivariate generalization of the proof of Theorem 3.1 in \citep{adaptsupnorm}, with some extra new steps to deal with anisotropic smoothness and the fact that we are also considering mixed partial derivatives.

\begin{proof}[Proof of Theorem \ref{th:adapt}]
Since the CDV wavelets are compactly supported and their derivatives are uniformly bounded, it follows that for $\boldsymbol{r}=(r_1,\dotsc,r_d)^T$ with $0\leq r_l<\eta+1,l=1,\dotsc,d$,
\begin{align}\label{eq:regular}
\left\|\sum_{\boldsymbol{m}}|D^{\boldsymbol{r}}\varphi_{\boldsymbol{N},\boldsymbol{m}}|\right\|_\infty=O(1),\quad\left\|\sum_{\boldsymbol{k}}
|D^{\boldsymbol{r}}\psi_{\boldsymbol{j},\boldsymbol{k}}|\right\|_\infty\lesssim\prod_{l=1}^d2^{(1/2+r_l)j_l}.
\end{align}
If $f_0\in\mathcal{B}^{\boldsymbol{\alpha}}_{\infty,\infty}(R)$, then by using the wavelet characterization of \eqref{eq:besov}, it follows that for $\alpha_l<\eta+1,l=1,\dotsc,d$,
\begin{align}\label{eq:infbesov}
\|\boldsymbol{\vartheta}_0\|_\infty\leq R,\qquad\|\boldsymbol{\theta}_{\boldsymbol{j}}^0\|_\infty\leq R2^{-\sum_{l=1}^d\alpha_lj_l\left(\frac{1}{d}+\frac{1}{2\alpha^{*}}\right)},
\end{align}
for any $\boldsymbol{j}$. Note that \eqref{eq:prior} implicitly implies that $\theta_{\boldsymbol{j},\boldsymbol{k}}=0$ when $j_l>J_{n,l}$ for some $l=1,\dotsc,d$. Denote $\mathcal{P}=\{(\boldsymbol{j},\boldsymbol{k}):\theta_{\boldsymbol{j},\boldsymbol{k}}\neq0\}$ the set of nonzero wavelet coefficients. In view of \eqref{eq:infbesov} above, we define for some constant $\gamma>0$ the set
\begin{align}\label{eq:Jngamma}
\mathcal{J}_n(\gamma)&=\left\{(\boldsymbol{j},\boldsymbol{k}):|\theta_{\boldsymbol{j},\boldsymbol{k}}^{0}|>\prod_{l=1}^d\min\left\{2^{-\alpha_lj_l\left(\frac{1}{d}+\frac{1}{2\alpha^{*}}\right)}, \gamma\left(\frac{\log{n}}{n}\right)^{\frac{1}{2d}}\right\}\right\},
\end{align}
and for some constants $0<\underline{\gamma}<\overline{\gamma}<\infty$, the events
\begin{align}\label{eq:abc}
\mathcal{A}&:=\left[\sup_{(\boldsymbol{j},\boldsymbol{k})\in\mathcal{J}_n(\underline{\gamma})}|\theta_{\boldsymbol{j},\boldsymbol{k}}-\theta_{\boldsymbol{j},\boldsymbol{k}}^{0}|\leq\overline{\gamma}\sqrt{\frac{\log{n}}{n}}\right],\nonumber\\
\mathcal{B}&:=[\mathcal{P}\cap\mathcal{J}_n(\underline{\gamma})^c=\emptyset]=\bigcap_{(\boldsymbol{j},\boldsymbol{k})\in\mathcal{J}_n(\underline{\gamma})^c}[\theta_{\boldsymbol{j},\boldsymbol{k}}=0],\nonumber\\
\mathcal{C}&:=[\mathcal{P}^c\cap\mathcal{J}_n(\overline{\gamma})=\emptyset]=\bigcap_{(\boldsymbol{j},\boldsymbol{k})\in\mathcal{J}_n(\overline{\gamma})}[\theta_{\boldsymbol{j},\boldsymbol{k}}\neq0].
\end{align}
As discussed in points $1$ and $2$ in Section \ref{sec:result}, getting the correct sup-norm rate involves showing that $\mathcal{A}$ occurs with posterior probability tending to $1$, and we do not make any signal detection errors as represented by events $\mathcal{B}$ and $\mathcal{C}$.

Let $\mathcal{V}_n$ be a shrinking neighborhood of $\sigma_0$ such that $\mathrm{E}_0\Pi(\sigma\in\mathcal{V}_n|\boldsymbol{Y})\rightarrow1$. Observe that for $\epsilon_{n,\boldsymbol{r}}:=(n/\log{n})^{-\alpha^{*}\{1-\sum_{l=1}^d(r_l/\alpha_l)\}/(2\alpha^{*}+d)}$ and some large enough constant $M>0$ to be specified below, $\mathrm{E}_0\Pi(\|D^{\boldsymbol{r}}f-D^{\boldsymbol{r}}f_0\|_\infty>M\epsilon_{n,\boldsymbol{r}}|\boldsymbol{Y})$ is bounded above by
\begin{align}\label{eq:main}
&\mathrm{E}_0\sup_{\sigma\in\mathcal{V}_n}\Pi(\|D^{\boldsymbol{r}}f-D^{\boldsymbol{r}}f_0\|_\infty>M\epsilon_{n,\boldsymbol{r}}|\boldsymbol{Y},\sigma)
+\mathrm{E}_0\Pi(\sigma\notin\mathcal{V}_n|\boldsymbol{Y})\nonumber\\
&\leq\mathrm{E}_0\sup_{\sigma\in\mathcal{V}_n}\Pi([\|D^{\boldsymbol{r}}f-D^{\boldsymbol{r}}f_0\|_\infty>M\epsilon_{n,\boldsymbol{r}}]\cap\mathcal{A}\cap\mathcal{B}|\boldsymbol{Y},\sigma)\nonumber\\
&\qquad+\mathrm{E}_0\Pi(\sigma\notin\mathcal{V}_n|\boldsymbol{Y})+\mathrm{E}_0\sup_{\sigma\in\mathcal{V}_n}\Pi(\mathcal{B}^c|\boldsymbol{Y},\sigma)\nonumber\\
&\qquad+\mathrm{E}_0\sup_{\sigma\in\mathcal{V}_n}\Pi(\mathcal{C}^c|\boldsymbol{Y},\sigma)
+\mathrm{E}_0\sup_{\sigma\in\mathcal{V}_n}\Pi(\mathcal{A}^c\cap\mathcal{C}|\boldsymbol{Y},\sigma).
\end{align}

By Proposition \ref{prop:consistent}, the second term tends to $0$. By Lemmas \ref{lem:lemma1} and \ref{lem:lemma2} in Section \ref{sec:proof2} below, the last three terms tend to $0$. We then proceed to show that the first term on the right hand side of \eqref{eq:main} approaches $0$ as $n\rightarrow\infty$. Now for any $\boldsymbol{x}\in[0,1]^d$,
\begin{align}\label{eq:coefbound}
|D^{\boldsymbol{r}}f(\boldsymbol{x})-D^{\boldsymbol{r}}f_0(\boldsymbol{x})|&\leq\sum_{m_1=0}^{2^{N_1}-1}\cdots\sum_{m_d=0}^{2^{N_d}-1}|\vartheta_{\boldsymbol{m}}-\vartheta_{\boldsymbol{m}}^0||D^{\boldsymbol{r}}\varphi_{\boldsymbol{N},\boldsymbol{m}}(\boldsymbol{x})|\nonumber\\
&\quad+\sum_{j_1=N_1}^\infty\sum_{k_1=0}^{2^{j_1}-1}\cdots\sum_{j_d=N_d}^\infty\sum_{k_d=0}^{2^{j_d}-1}
|\theta_{\boldsymbol{j},\boldsymbol{k}}-\theta_{\boldsymbol{j},\boldsymbol{k}}^0||D^{\boldsymbol{r}}\psi_{\boldsymbol{j},\boldsymbol{k}}(\boldsymbol{x})|.
\end{align}
Writing $\boldsymbol{\vartheta}=\{\vartheta_{\boldsymbol{m}}:0\leq m_l\leq 2^{N_l}-1,1\leq l\leq d\}$ and using the fact that $\|\boldsymbol{x}\|_\infty\leq\|\boldsymbol{x}\|$ for any $\boldsymbol{x}\in\mathbb{R}^n$, the first sum above is bounded by
\begin{align}\label{eq:sumfirst}
\|\boldsymbol{\vartheta}-\boldsymbol{\vartheta}_0\|_\infty\left\|\sum_{m_1=0}^{2^{N_1}-1}\cdots\sum_{m_d=0}^{2^{N_d}-1}|D^{\boldsymbol{r}}\varphi_{\boldsymbol{N},\boldsymbol{m}}|\right\|_\infty
\lesssim\|\boldsymbol{\vartheta}-\boldsymbol{\vartheta}_0\|\lesssim\epsilon_{n,\boldsymbol{r}},
\end{align}
where the last inequality follows from \eqref{eq:regular} and Corollary \ref{cor:l2contract}.

To bound the second sum, we first choose $J_{n,l}(\boldsymbol{\alpha}),l=1,\dotsc,d$ such that $2^{J_{n,l}(\boldsymbol{\alpha})}\asymp(n/\log{n})^{\alpha^{*}/\{\alpha_l(2\alpha^{*}+d)\}}$. By \eqref{eq:infbesov} with $\boldsymbol{j}=(J_{n,1}(\boldsymbol{\alpha}),\dotsc,J_{n,d}(\boldsymbol{\alpha}))^T$, we have $|\theta_{\boldsymbol{j},\boldsymbol{k}}^0|\leq\|\boldsymbol{\theta}_{\boldsymbol{j}}^0\|_\infty\leq C(\log{n}/n)^{1/2}$ for some constant $C>0$. Therefore, if we choose $\underline{\gamma}$ small enough, we will have  $|\theta_{\boldsymbol{j},\boldsymbol{k}}^0|>\underline{\gamma}(\log{n}/n)^{1/2}$ for $j_l\leq J_{n,l}(\boldsymbol{\alpha})-1,l=1,\dotsc,d$. In other words,
\begin{align}\label{eq:important}
\mathcal{J}_n(\underline{\gamma})\subset\mathcal{I}_n(\boldsymbol{\alpha}):=\{(\boldsymbol{j},\boldsymbol{k}):j_l<J_{n,l}(\boldsymbol{\alpha}),k_l<2^{j_l},l=1,\dotsc,d\},
\end{align}
for sufficiently small $\underline{\gamma}$. Write the sum $\sum_{(\boldsymbol{j},\boldsymbol{k})}$ as an abbreviation of $\sum_{j_1}\sum_{k_1}\cdots\sum_{j_d}\sum_{k_d}$. Using $\mathcal{I}_n(\boldsymbol{\alpha})$ and its complement, the second sum on the right hand side of \eqref{eq:coefbound} is
\begin{align}\label{eq:sumJ}
\left(\sum_{(\boldsymbol{j},\boldsymbol{k})\in\mathcal{I}_n(\boldsymbol{\alpha})}
+\sum_{(\boldsymbol{j},\boldsymbol{k})\in\mathcal{I}_n(\boldsymbol{\alpha})^c}\right)|\theta_{\boldsymbol{j},\boldsymbol{k}}-\theta_{\boldsymbol{j},\boldsymbol{k}}^0||D^{\boldsymbol{r}}\psi_{\boldsymbol{j},\boldsymbol{k}}(\boldsymbol{x})|.
\end{align}
We first bound the second term with summation indices in $\mathcal{I}_n(\boldsymbol{\alpha})^c$, which can be further decomposed as
\begin{align*}
\sum_{(\boldsymbol{j},\boldsymbol{k})\in\mathcal{I}_n(\boldsymbol{\alpha})^c\cap\mathcal{P}^c}|\theta_{\boldsymbol{j},\boldsymbol{k}}^0||D^{\boldsymbol{r}}\psi_{\boldsymbol{j},\boldsymbol{k}}(\boldsymbol{x})|
+\sum_{(\boldsymbol{j},\boldsymbol{k})\in\mathcal{I}_n(\boldsymbol{\alpha})^c\cap\mathcal{P}}|\theta_{\boldsymbol{j},\boldsymbol{k}}-\theta_{\boldsymbol{j},\boldsymbol{k}}^0||D^{\boldsymbol{r}}\psi_{\boldsymbol{j},\boldsymbol{k}}(\boldsymbol{x})|.
\end{align*}
Taking complements on both sides of \eqref{eq:important}, we have $\mathcal{I}_n(\boldsymbol{\alpha})^c\subset\mathcal{J}_n(\underline{\gamma})^c$. Thus, the second sum above is bounded by $\sum_{\mathcal{J}_n(\underline{\gamma})^c\cap\mathcal{P}}|\theta_{\boldsymbol{j},\boldsymbol{k}}-\theta_{\boldsymbol{j},\boldsymbol{k}}^0||D^{\boldsymbol{r}}\psi_{\boldsymbol{j},\boldsymbol{k}}(\boldsymbol{x})|$. This is zero with posterior probability tending to $1$ under event $\mathcal{B}$. In view of \eqref{eq:Jngamma}, the first sum with summation indices in $(\boldsymbol{j},\boldsymbol{k})\in\mathcal{I}_n(\boldsymbol{\alpha})^c\cap\mathcal{P}^c\subset\mathcal{J}_n(\underline{\gamma})^c$ is bounded above by
\begin{align}\label{eq:sumJ2}
\max\{R,\underline{\gamma}\}\sum_{(\boldsymbol{j},\boldsymbol{k})\in\mathcal{I}_n(\boldsymbol{\alpha})^c}\prod_{l=1}^d
U_{j_l,n}|D^{\boldsymbol{r}}\psi_{\boldsymbol{j},\boldsymbol{k}}(\boldsymbol{x})|.
\end{align}
where $U_{j_l,n}:=\min\left\{2^{-\alpha_lj_l\left(\frac{1}{d}+\frac{1}{2\alpha^{*}}\right)},\left(\log{n}/n\right)^{1/(2d)}\right\}$. If we define sets $\mathcal{Q}_l,l=1,\dotsc,d$, where $\mathcal{Q}_l$ can be $\{N_l\leq j_l\leq J_{n,l}(\boldsymbol{\alpha})-1\}$ or $\{j_l\geq J_{n,l}(\boldsymbol{\alpha})\}$, but with the constraint that not all $\mathcal{Q}_l$'s are $\{N_l\leq j_l\leq J_{n,l}(\boldsymbol{\alpha})-1\}$. Then the summation in \eqref{eq:sumJ2} is over $(\boldsymbol{j},\boldsymbol{k})$ such that $\boldsymbol{j}$ takes on all $2^d-1$ possible combinations of the $\mathcal{Q}_l$'s, and each combination has the form
\begin{align*}
&\sum_{j_1\in\mathcal{Q}_1}\sum_{k_1=0}^{2^{j_1}-1}\cdots\sum_{j_d\in\mathcal{Q}_d}\sum_{k_d=0}^{2^{j_d}-1}\prod_{l=1}^d
U_{j_l,n}
|D^{\boldsymbol{r}}\psi_{\boldsymbol{j},\boldsymbol{k}}(\boldsymbol{x})|\nonumber\\
&\qquad\leq\prod_{l=1}^d\sum_{j_l\in\mathcal{Q}_l}U_{j_l,n}
\left\|\sum_{k_1=0}^{2^{j_1}-1}\cdots\sum_{k_d=0}^{2^{j_d}-1}|D^{\boldsymbol{r}}\psi_{\boldsymbol{j},\boldsymbol{k}}|\right\|_\infty
\lesssim\prod_{l=1}^d\sum_{j_l\in\mathcal{Q}_l}2^{(r_l+1/2)j_l}U_{j_l,n}
\end{align*}
where the last inequality follows from \eqref{eq:regular}. The two expressions inside the minimum function of $U_{j_l,n}$ will have the same order if $j_l=J_{n,l}(\boldsymbol{\alpha})$ and $2^{-\alpha_lj_l\left(\frac{1}{d}+\frac{1}{2\alpha^{*}}\right)}$ will have a larger order when $j_l<J_{n,l}(\boldsymbol{\alpha})$, while $(\log{n}/n)^{1/(2d)}$ will dominate if $j_l\geq J_{n,l}(\boldsymbol{\alpha})$. Therefore under the regime $\mathcal{Q}_l=\{N_l\leq j_l\leq J_{n,l}(\boldsymbol{\alpha})-1\}$, we have that
\begin{align*}
&\sum_{N_l\leq j_l<J_{n,l}(\boldsymbol{\alpha})}2^{(r_l+1/2)j_l}\min\left\{2^{-\alpha_lj_l\left(\frac{1}{d}+\frac{1}{2\alpha^{*}}\right)},\left(\frac{\log{n}}{n}\right)^{1/(2d)}\right\}\nonumber\\
&\lesssim2^{(r_l+1/2)J_{n,l}(\boldsymbol{\alpha})}\left(\log{n}/n\right)^{1/(2d)}
\lesssim(n/\log{n})^{-\frac{\alpha^{*}}{2\alpha^{*}+d}\left(\frac{1}{d}+\frac{1}{2\alpha^{*}}-\frac{1}{2\alpha_l}-\frac{r_l}{\alpha_l}\right)};
\end{align*}
while under the regime $\mathcal{Q}_l=\{j_l\geq J_{n,l}(\boldsymbol{\alpha})\}$, we will have
\begin{align*}
&\sum_{j_l\geq J_{n,l}(\boldsymbol{\alpha})}2^{(r_l+1/2)j_l}\min\left\{2^{-\alpha_lj_l\left(\frac{1}{d}+\frac{1}{2\alpha^{*}}\right)},\left(\frac{\log{n}}{n}\right)^{1/(2d)}\right\}\nonumber\\
&\lesssim2^{-\left[\alpha_l\left(\frac{1}{d}+\frac{1}{2\alpha^{*}}\right)-r_l-1/2\right]J_{n,l}(\boldsymbol{\alpha})}
\lesssim(n/\log{n})^{-\frac{\alpha^{*}}{2\alpha^{*}+d}\left(\frac{1}{d}+\frac{1}{2\alpha^{*}}-\frac{1}{2\alpha_l}-\frac{r_l}{\alpha_l}\right)},
\end{align*}
where the first inequality above is justified since $\alpha_l\left(\frac{1}{d}+\frac{1}{2\alpha^{*}}\right)-r_l-1/2>0$ for $l=1,\dotsc,d$ on $\boldsymbol{\alpha}\in\mathbb{A}_{\boldsymbol{r}}$. Putting this bound back and using the fact that there are only $2^d-1$ combinations of the $\mathcal{Q}_l$'s, it then follows that the right hand side of \eqref{eq:sumJ2} is
\begin{align*}
O\left((n/\log{n})^{-\frac{\alpha^{*}}{2\alpha^{*}+d}\sum_{l=1}^d\left(\frac{1}{d}+\frac{1}{2\alpha^{*}}-\frac{1}{2\alpha_l}-\frac{r_l}{\alpha_l}\right)}\right)=O(\epsilon_{n,\boldsymbol{r}}).
\end{align*}
Using a similar decomposition as before, the first sum with summation indices in $\mathcal{I}_n(\boldsymbol{\alpha})=\{(\boldsymbol{j},\boldsymbol{k}):N_l\leq j_l\leq J_{n,l}(\boldsymbol{\alpha})-1,0\leq k_l\leq2^{j_l}-1\}$ of \eqref{eq:sumJ} can be decomposed into 3 parts:
\begin{align*}
&\sum_{(\boldsymbol{j},\boldsymbol{k})\in\mathcal{I}_n(\boldsymbol{\alpha})\cap\mathcal{J}_n(\underline{\gamma})}
+\sum_{(\boldsymbol{j},\boldsymbol{k})\in\mathcal{I}_n(\boldsymbol{\alpha})\cap\mathcal{J}_n(\underline{\gamma})^c\cap\mathcal{P}}
+\sum_{(\boldsymbol{j},\boldsymbol{k})\in\mathcal{I}_n(\boldsymbol{\alpha})\cap\mathcal{J}_n(\underline{\gamma})^c\cap\mathcal{P}^c}.
\intertext{Recalling $\mathcal{J}_n(\underline{\gamma})\subset\mathcal{I}_n(\boldsymbol{\alpha})$ for the first sum above, and the second sum in the decomposition vanishes under event $\mathcal{B}$, the first sum of \eqref{eq:sumJ} reduces to}
&\sum_{(\boldsymbol{j},\boldsymbol{k})\in\mathcal{J}_n(\underline{\gamma})}|\theta_{\boldsymbol{j},\boldsymbol{k}}-\theta_{\boldsymbol{j},\boldsymbol{k}}^0||D^{\boldsymbol{r}}\psi_{\boldsymbol{j},\boldsymbol{k}}(\boldsymbol{x})|
+\sum_{(\boldsymbol{j},\boldsymbol{k})\in\mathcal{I}_n(\boldsymbol{\alpha})\cap\mathcal{J}_n(\underline{\gamma})^c\cap\mathcal{P}^c}|\theta_{\boldsymbol{j},\boldsymbol{k}}^0||D^{\boldsymbol{r}}\psi_{\boldsymbol{j},\boldsymbol{k}}(\boldsymbol{x})|.
\end{align*}
By intersecting with $\mathcal{A}$, the right hand side above is further bounded by
\begin{align*}
&\max_{(\boldsymbol{j},\boldsymbol{k})\in\mathcal{J}_n(\underline{\gamma})}|\theta_{\boldsymbol{j},\boldsymbol{k}}-\theta_{\boldsymbol{j},\boldsymbol{k}}^0|\sum_{j_1=N_1}^{J_{n,1}(\boldsymbol{\alpha})-1}\cdots\sum_{j_d=N_d}^{J_{n,d}(\boldsymbol{\alpha})-1}\sum_{k_1=0}^{2^{j_1}-1}\cdots\sum_{k_d=0}^{2^{j_d}-1}|D^{\boldsymbol{r}}\psi_{\boldsymbol{j},\boldsymbol{k}}(\boldsymbol{x})|\nonumber\\
&\qquad+\sum_{(\boldsymbol{j},\boldsymbol{k})\in\mathcal{I}_n(\boldsymbol{\alpha})\cap\mathcal{J}_n(\underline{\gamma})^c}\underline{\gamma}\sqrt{\frac{\log{n}}{n}}|D^{\boldsymbol{r}}\psi_{\boldsymbol{j},\boldsymbol{k}}(\boldsymbol{x})|\nonumber\\
&\lesssim\prod_{l=1}^d\sum_{N_l\leq j_l<J_{n,l}(\boldsymbol{\alpha})}2^{(1/2+r_l)j_l}\sqrt{\frac{\log{n}}{n}}\lesssim\epsilon_{n,\boldsymbol{r}},
\end{align*}
where the last inequality follows from \eqref{eq:regular}. Now, combining the above with bounds established in \eqref{eq:sumfirst} and \eqref{eq:sumJ2} into \eqref{eq:coefbound}, we conclude that $\|D^{\boldsymbol{r}}f-D^{\boldsymbol{r}}f_0\|_\infty\mathbbm{1}_{\mathcal{A}\cap\mathcal{B}}\leq M\epsilon_{n,\boldsymbol{r}}$ for some sufficiently large constant $M>0$ under the posterior distribution. Using this fact with Lemmas \ref{lem:lemma1} and \ref{lem:lemma2}, it follows that the right hand side of \eqref{eq:main} approaches $0$ as $n\rightarrow\infty$.
\end{proof}
It now remains to show that the last three terms in \eqref{eq:main} approach $0$ asymptotically, and this is detailed in Section \ref{sec:proof2} below.

\subsection{Regression to quasi-white noise posterior}\label{sec:proof2}
In view of \eqref{eq:abc} above, it is clear that we need to bound posterior probabilities of events involving only individual coefficient $\theta_{\boldsymbol{j},\boldsymbol{k}}$. To accomplish this, we bound posterior of $\theta_{\boldsymbol{j},\boldsymbol{k}}$ under the regression model by posterior of $\theta_{\boldsymbol{j},\boldsymbol{k}}$ arising from some quasi-white noise model, where the latter model greatly simplifies calculations through its component-wise structure.

We first define notations. If the rows and columns of a matrix are each indexed by $d$-dimensional multi-indices, we assume that these multi-indices are arranged in the lexicographic order. Let $\boldsymbol{i}=(i_1,\dotsc,i_d)^T$ and $\boldsymbol{j}=(j_1,\dotsc,j_d)^T$. For a matrix $\boldsymbol{A}$ indexed by $2d$-dimensional indices, we write $a_{\boldsymbol{i},\boldsymbol{j}}$ or $\boldsymbol{A}_{\boldsymbol{i},\boldsymbol{j}}$ to be the $(\boldsymbol{i},\boldsymbol{j})$th entry, $\boldsymbol{A}_{\boldsymbol{j},\cdot}$ to be the $\boldsymbol{j}$th row of $\boldsymbol{A}$, $\boldsymbol{A}_{\boldsymbol{j},-\boldsymbol{j}}$ to be the $\boldsymbol{j}$th row of $\boldsymbol{A}$ such that the $\boldsymbol{j}$th entry of that row is excluded, and $\boldsymbol{A}_{-\boldsymbol{j},-\boldsymbol{j}}$ to be a matrix created as a result of deleting the $\boldsymbol{j}$th row and $\boldsymbol{j}$th column of $\boldsymbol{A}$. For a vector $\boldsymbol{x}$, we write $x_{\boldsymbol{j}}$ to be its $\boldsymbol{j}$th component, and $\boldsymbol{x}_{-\boldsymbol{j}}$ be a vector created from $\boldsymbol{x}$ such that its $\boldsymbol{j}$th component is excluded.

Given observations $\{\boldsymbol{X}_1,\dotsc,\boldsymbol{X}_n\}$, we construct the father wavelet matrix $\boldsymbol{B}$ such that its $(h,\boldsymbol{m})$th entry is $\varphi_{\boldsymbol{N},\boldsymbol{m}}(\boldsymbol{X}_h)$, for $1\leq h\leq n$ and $0\leq m_l\leq2^{N_l}-1,l=1,\dotsc,d$. In addition, we define the mother wavelet matrix $\boldsymbol{\Psi}_{\boldsymbol{j}}$ such that its $(h,\boldsymbol{k})$th entry is $\psi_{\boldsymbol{j},\boldsymbol{k}}(\boldsymbol{X}_h)$, for $1\leq h\leq n$ and $0\leq k_l\leq2^{j_l}-1,l=1,\dotsc,d$.

Observe that for $d\times1$ vectors $\boldsymbol{a},\boldsymbol{b},\boldsymbol{c},\boldsymbol{e}$, $\boldsymbol{\Psi}_{\boldsymbol{a}}^T\boldsymbol{\Psi}_{\boldsymbol{b}}$ is a matrix indexed by $2d$-dimensional indices, such that $(\boldsymbol{\Psi}_{\boldsymbol{a}}^T\boldsymbol{\Psi}_{\boldsymbol{b}})_{\boldsymbol{c},\boldsymbol{e}}=\sum_{i=1}^n\psi_{\boldsymbol{a},\boldsymbol{c}}(\boldsymbol{X}_i)\psi_{\boldsymbol{b},\boldsymbol{e}}(\boldsymbol{X}_i)$. Similarly, we also have $(\boldsymbol{\Psi}_{\boldsymbol{a}}^T\boldsymbol{B})_{\boldsymbol{c},\boldsymbol{m}}=\sum_{i=1}^n\psi_{\boldsymbol{a},\boldsymbol{c}}(\boldsymbol{X}_i)\varphi_{\boldsymbol{N},\boldsymbol{m}}(\boldsymbol{X}_i)$. Recall that $\boldsymbol{\vartheta}=\{\vartheta_{\boldsymbol{m}}:0\leq m_l\leq2^{N_l}-1,l=1,\dotsc,d\}$ and define $\boldsymbol{\theta}_{\boldsymbol{j}}=\{\theta_{\boldsymbol{j},\boldsymbol{k}}:0\leq k_l\leq2^{j_l}-1,l=1,\dotsc,d\}$ for a fixed $\boldsymbol{j}$. Let $\boldsymbol{\theta}_{-(\boldsymbol{j},\boldsymbol{k})}=\{\boldsymbol{\theta}:\theta_{\boldsymbol{j},\boldsymbol{k}} \text{ is excluded}\}$, where $\boldsymbol{\theta}=\{\boldsymbol{\theta}_{\boldsymbol{j}}: N_l\leq j_l\leq J_{n,l}-1, l=1,\dotsc,d\}$. We write $\boldsymbol{\varepsilon}=(\varepsilon_1,\dotsc,\varepsilon_n)^T$ and the truncation error $\boldsymbol{\xi}=\boldsymbol{F}_0-\sum_{j_1=N_1}^{J_{n,1}-1}\cdots\sum_{j_d=N_1}^{J_{n,d}-1}\boldsymbol{\Psi}_{\boldsymbol{j}}\boldsymbol{\theta}_{\boldsymbol{j}}^0$, where $\boldsymbol{F}_0=(f_0(\boldsymbol{X}_1),\dotsc,f_0(\boldsymbol{X}_n))^T$.

Write \eqref{eq:model} as $\boldsymbol{Y}=\boldsymbol{B\vartheta}_0+\sum_{j_1=N_1}^{J_{n,1}-1}\cdots\sum_{j_d=N_d}^{J_{n,d}-1}\boldsymbol{\Psi}_{\boldsymbol{j}}\boldsymbol{\theta}_{\boldsymbol{j}}^{0}+\boldsymbol{\xi}+\boldsymbol{\varepsilon}$ under the true distribution $P_0$. Then $\|\boldsymbol{Y}-\boldsymbol{B\vartheta}-\sum_{j_1=N_1}^{J_{n,1}-1}\cdots\sum_{j_d=N_d}^{J_{n,d}-1}\boldsymbol{\Psi}_{\boldsymbol{j}}\boldsymbol{\theta}_{\boldsymbol{j}}\|^2$ is
\begin{align}\label{eq:nozero}
(\theta_{\boldsymbol{j},\boldsymbol{k}}-\theta_{\boldsymbol{j},\boldsymbol{k}}^{0})^2(\boldsymbol{\Psi}_{\boldsymbol{j}}^T\boldsymbol{\Psi}_{\boldsymbol{j}})_{\boldsymbol{k},\boldsymbol{k}}+2(\theta_{\boldsymbol{j},\boldsymbol{k}}-\theta_{\boldsymbol{j},\boldsymbol{k}}^{0})\beta_n(\widetilde{\Theta})
+H_n(\widetilde{\Theta}),
\end{align}
where we have separated the $(\boldsymbol{j},\boldsymbol{k})$th component out from the rest such that
\begin{align*}
\beta_n(\widetilde{\Theta})&:=(\boldsymbol{\Psi}_{\boldsymbol{j}}^T\boldsymbol{\Psi}_{\boldsymbol{j}})_{\boldsymbol{k},-\boldsymbol{k}}(\boldsymbol{\theta}_{\boldsymbol{j}}
-\boldsymbol{\theta}_{\boldsymbol{j}}^0)_{-\boldsymbol{k}}
+\sum_{\boldsymbol{a}\neq\boldsymbol{j}}(\boldsymbol{\Psi}_{\boldsymbol{j}}^T\boldsymbol{\Psi}_{\boldsymbol{a}})_{\boldsymbol{k},\cdot}(\boldsymbol{\theta}_{\boldsymbol{a}}-\boldsymbol{\theta}_{\boldsymbol{a}}^0)\\
&\qquad+(\boldsymbol{\Psi}_{\boldsymbol{j}}^T\boldsymbol{B})_{\boldsymbol{k},\cdot}(\boldsymbol{\vartheta}-\boldsymbol{\vartheta}_0)
-(\boldsymbol{\xi}^T\boldsymbol{\Psi}_{\boldsymbol{j}})_{\boldsymbol{k}}-(\boldsymbol{\varepsilon}^T\boldsymbol{\Psi}_{\boldsymbol{j}})_{\boldsymbol{k}},
\end{align*}
for $\widetilde{\Theta}:=(\boldsymbol{\vartheta},\boldsymbol{\theta}_{-(\boldsymbol{j},\boldsymbol{k})})$ and $H_n(\widetilde{\Theta})$ is
\begin{align*}
&(\boldsymbol{\theta}_{\boldsymbol{j}}-\boldsymbol{\theta}_{\boldsymbol{j}}^{0})_{-\boldsymbol{k}}^T(\boldsymbol{\Psi}_{\boldsymbol{j}}^T\boldsymbol{\Psi}_{\boldsymbol{j}})_{-\boldsymbol{k},-\boldsymbol{k}}
(\boldsymbol{\theta}_{\boldsymbol{j}}-\boldsymbol{\theta}_{\boldsymbol{j}}^{0})_{-\boldsymbol{k}}\\
&\quad+2\sum_{\boldsymbol{a}\neq\boldsymbol{j}}(\boldsymbol{\theta}_{\boldsymbol{j}}-\boldsymbol{\theta}_{\boldsymbol{j}}^{0})_{-\boldsymbol{k}}^T(\boldsymbol{\Psi}_{\boldsymbol{j}}^T
\boldsymbol{\Psi}_{\boldsymbol{a}})_{-\boldsymbol{k},\cdot}(\boldsymbol{\theta}_{\boldsymbol{a}}-\boldsymbol{\theta}_{\boldsymbol{a}}^{0})
+(\boldsymbol{\vartheta}-\boldsymbol{\vartheta}_0)^T\boldsymbol{B}^T\boldsymbol{B}(\boldsymbol{\vartheta}-\boldsymbol{\vartheta}_0)\\
&\quad+\sum_{\boldsymbol{a}\neq\boldsymbol{j}}\sum_{\boldsymbol{b}\neq\boldsymbol{j}}(\boldsymbol{\theta}_{\boldsymbol{a}}-\boldsymbol{\theta}_{\boldsymbol{a}}^{0})^T
\boldsymbol{\Psi}_{\boldsymbol{a}}^T\boldsymbol{\Psi}_{\boldsymbol{b}}(\boldsymbol{\theta}_{\boldsymbol{b}}-\boldsymbol{\theta}_{\boldsymbol{b}}^{0})
-2\boldsymbol{\xi}^T\boldsymbol{B}(\boldsymbol{\vartheta}-\boldsymbol{\vartheta}_0)-2\boldsymbol{\varepsilon}^T\boldsymbol{B}(\boldsymbol{\vartheta}-\boldsymbol{\vartheta}_0)\\
&\quad+2\sum_{\boldsymbol{a}\neq\boldsymbol{j}}(\boldsymbol{\vartheta}-\boldsymbol{\vartheta}_0)^T\boldsymbol{B}^T\boldsymbol{\Psi}_{\boldsymbol{a}}
(\boldsymbol{\theta}_{\boldsymbol{a}}-\boldsymbol{\theta}_{\boldsymbol{a}}^0)
+2(\boldsymbol{\vartheta}-\boldsymbol{\vartheta}_0)^T(\boldsymbol{B}^T\boldsymbol{\Psi}_{\boldsymbol{j}})_{\cdot,-{\boldsymbol{k}}}(\boldsymbol{\theta}_{\boldsymbol{j}}-\boldsymbol{\theta}_{\boldsymbol{j}}^0)_{-{\boldsymbol{k}}}\\
&\quad-2(\boldsymbol{\xi}^T\boldsymbol{\Psi}_{\boldsymbol{j}})_{-{\boldsymbol{k}}}(\boldsymbol{\theta}_{\boldsymbol{j}}-\boldsymbol{\theta}_{\boldsymbol{j}}^{0})_{-{\boldsymbol{k}}}
-2\sum_{\boldsymbol{a}\neq\boldsymbol{j}}\boldsymbol{\xi}^T\boldsymbol{\Psi}_{\boldsymbol{a}}(\boldsymbol{\theta}_{\boldsymbol{a}}-\boldsymbol{\theta}_{\boldsymbol{a}}^{0})\\
&\quad-2(\boldsymbol{\varepsilon}^T\boldsymbol{\Psi}_{\boldsymbol{j}})_{-{\boldsymbol{k}}}(\boldsymbol{\theta}_{\boldsymbol{j}}-\boldsymbol{\theta}_{\boldsymbol{j}}^{0})_{-{\boldsymbol{k}}}
-2\sum_{\boldsymbol{a}\neq\boldsymbol{j}}\boldsymbol{\varepsilon}^T\boldsymbol{\Psi}_{\boldsymbol{a}}(\boldsymbol{\theta}_{\boldsymbol{a}}-\boldsymbol{\theta}_{\boldsymbol{a}}^{0})+\|\boldsymbol{\xi}+\boldsymbol{\varepsilon}\|^2.
\end{align*}
The prior density of $\widetilde{\Theta}$ is $d\Pi(\widetilde{\Theta})=d\Pi(\boldsymbol{\vartheta})d\Pi(\boldsymbol{\theta}_{-(\boldsymbol{j},\boldsymbol{k})})$ where $d\Pi(\boldsymbol{\vartheta})=\prod_{\boldsymbol{m}}p(\vartheta_{\boldsymbol{m}})$ and
\begin{align*}
d\Pi(\boldsymbol{\theta}_{-(\boldsymbol{j},\boldsymbol{k})})=\prod_{(\boldsymbol{x,y})\neq(\boldsymbol{j},\boldsymbol{k})}
[(1-\omega_{\boldsymbol{x},n})d\delta_0(\theta_{\boldsymbol{x,y}})+\omega_{\boldsymbol{x},n}p(\theta_{\boldsymbol{x,y}})d\theta_{\boldsymbol{x,y}}].
\end{align*}
Let $\mathcal{U}$ and $\mathcal{W}$ be two measurable sets on the parameter space of $(\boldsymbol{\vartheta},\boldsymbol{\theta})$, and $\Omega$ be an event on $\boldsymbol{\varepsilon}$ or equivalently on $\boldsymbol{Y}$. Then in view of \eqref{eq:nozero} and by completing the squares, $\Pi(\theta_{\boldsymbol{j},\boldsymbol{k}}\in\mathcal{U},(\boldsymbol{\vartheta},\boldsymbol{\theta}_{-(\boldsymbol{j},\boldsymbol{k})})\in\mathcal{W}|\boldsymbol{Y},\sigma)\mathbbm{1}_{\Omega}$ is
\begin{align*}
&\frac{\int_{\mathcal{W}}\int_{\mathcal{U}} \exp\left\{-\|\boldsymbol{Y}-\boldsymbol{B\vartheta}-\sum_{\boldsymbol{r}}\boldsymbol{\Psi}_{\boldsymbol{r}}\boldsymbol{\theta}_{\boldsymbol{r}}\|^2/(2\sigma^2)\right\}d\Pi(\boldsymbol{\vartheta},\boldsymbol{\theta})}
{\int_{\mathbb{R}^q} \exp\left\{-\|\boldsymbol{Y}-\boldsymbol{B\vartheta}-\sum_{\boldsymbol{r}}\boldsymbol{\Psi}_{\boldsymbol{r}}\boldsymbol{\theta}_{\boldsymbol{r}}\|^2/(2\sigma^2)\right\}d\Pi(\boldsymbol{\vartheta},\boldsymbol{\theta})}\mathbbm{1}_{\Omega}\nonumber\\
&\leq\frac{\int_{\mathcal{W}}I_n(\mathcal{U},\widetilde{\Theta})K_n(\widetilde{\Theta})d\Pi(\widetilde{\Theta})}{\int_{\mathcal{W}}I_n(\mathbb{R},\widetilde{\Theta})K_n(\widetilde{\Theta})d\Pi(\widetilde{\Theta})}\mathbbm{1}_{\Omega},
\end{align*}
where $q$ is the dimension of $(\boldsymbol{\vartheta},\boldsymbol{\theta})$ and
\begin{align}
K_n(\widetilde{\Theta})&:=\exp\left\{\beta_n(\widetilde{\Theta})^2/[2\sigma^2(\boldsymbol{\Psi}_{\boldsymbol{j}}^T\boldsymbol{\Psi}_{\boldsymbol{j}})_{\boldsymbol{k},\boldsymbol{k}}]-H_n(\widetilde{\Theta})/(2\sigma^2)\right\},\nonumber\\ I_n(\mathcal{U},\widetilde{\Theta})&:=\int_{\mathcal{U}}\exp\left\{-\frac{(\boldsymbol{\Psi}_{\boldsymbol{j}}^T\boldsymbol{\Psi}_{\boldsymbol{j}})_{\boldsymbol{k},\boldsymbol{k}}}{2\sigma^2}
\left[\theta-\theta_{\boldsymbol{j},\boldsymbol{k}}^0+\frac{\beta_n(\widetilde{\Theta})}{(\boldsymbol{\Psi}_{\boldsymbol{j}}^T\boldsymbol{\Psi}_{\boldsymbol{j}})_{\boldsymbol{k},\boldsymbol{k}}}\right]^2\right\}d\pi(\theta).\label{eq:pratio}
\end{align}
Therefore, if $\mathcal{W}$ and $\Omega$ are both chosen such that $|\beta_n(\widetilde{\Theta})|$ has sharp upper bound of the correct order uniformly over $\widetilde{\Theta}$, then we can untangle the exponential factor in $I_n(\mathcal{U},\widetilde{\Theta})$ with $K_n(\widetilde{\Theta})$, and the ratio will look like a posterior from a sequence white noise model, with $\int_{\mathcal{W}}K_n(\widetilde{\Theta})d\Pi(\widetilde{\Theta})$ cancelling out each other. Hence, we reduce our posterior to one where we only have to compare the $(\boldsymbol{j},\boldsymbol{k})$th component at the top and bottom, just like in the case when we have quasi-white noise model of the form $Y_{\boldsymbol{j},\boldsymbol{k}}=\theta_{\boldsymbol{j},\boldsymbol{k}}+\sigma(\boldsymbol{\Psi}_{\boldsymbol{j}}^T\boldsymbol{\Psi}_{\boldsymbol{j}})_{\boldsymbol{k},\boldsymbol{k}}^{-1/2}\varepsilon_{\boldsymbol{j},\boldsymbol{k}}$.

The optimal choices of $\mathcal{W}$ and $\Omega$ depend on the statistical problem at hand, the assumed function space for $f_0$, and also implicitly depend on our choice of basis functions through the entries of $\boldsymbol{\Psi}_{\boldsymbol{j}}^T\boldsymbol{\Psi}_{\boldsymbol{j}}$. For orthonormal basis such as the wavelets used in this paper, the diagonal entries are typically of the order $n$ under some conditions on the truncation level $J_{n,l},l=1,\dotsc,d$ (see Lemma \ref{lem:phiphi} below). Let us denote $\widetilde{\beta}_n(\widetilde{\Theta})=\beta_n(\widetilde{\Theta})+(\boldsymbol{\varepsilon}^T\boldsymbol{\Psi}_{\boldsymbol{j}})_{\boldsymbol{k}}$. As we will show below, the appropriate $\mathcal{W}$ for our wavelet regression model is
\begin{align}\label{eq:wn}
\mathcal{W}_n=\{\widetilde{\Theta}:|\widetilde{\beta}_n(\widetilde{\Theta})|\leq \tau_n\sqrt{n\log{n}}\},
\end{align}
with $\tau_n\rightarrow0$ given in \eqref{eq:tau} of Lemma \ref{lem:betabound}; while $\Omega\equiv\Omega_n(c)$ has the form
\begin{align}\label{eq:sigma2bound}
\bigcap_{\substack{N_l\leq j_l\leq J_{n,l}-1\\0\leq k_l\leq2^{j_l}-1\\l=1,\dotsc,d}}\left\{\frac{|\beta_n(\widetilde{\Theta})-\widetilde{\beta}_n(\widetilde{\Theta})|}
{\sqrt{\sigma_0^2(\boldsymbol{\Psi}_{\boldsymbol{j}}^T\boldsymbol{\Psi}_{\boldsymbol{j}})_{\boldsymbol{k},\boldsymbol{k}}}}
\leq\left(2\log{\prod_{l=1}^d2^{j_l}}+c\log{n}\right)^{1/2}\right\}.
\end{align}
The extent that $\mathcal{W}_n$ holds with high posterior probability depends on the choice of the wavelet truncation $2^{\sum_{l=1}^dJ_{n,l}}$, and also on the lower bound on $\alpha^{*}$ that we are able or willing to impose. The lemma below makes this statement explicit.

\begin{lemma}\label{lem:betabound}
Let us take $2^{\sum_{l=1}^dJ_{n,l}}=(n/\log{n})^m$ for some $m\leq1$. Then for any $N_l\leq j_l\leq J_{n,l}-1$ and $0\leq k_l\leq2^{j_l}-1, l=1,\dotsc,d$,
\begin{align*}
\sup_{f_0\in\mathcal{B}^{\boldsymbol{\alpha}}_{\infty,\infty}(R)}\mathrm{E}_0\Pi\left(|\widetilde{\beta}_n(\widetilde{\Theta})|>\tau_n\sqrt{n\log{n}}\middle|\boldsymbol{Y}\right)\leq n^{-P_4},
\end{align*}
for all $\boldsymbol{\alpha}$ such that
\begin{align*}
\alpha^{*}>d\max\left\{\frac{(m-1)+\sqrt{(m-1)^2+8m^2}}{4m},\quad\frac{1}{2}\left(\frac{1}{m}-1\right)\right\},
\end{align*}
with $\tau_n\to0$ given in \eqref{eq:tau} below and $P_4>0$ the same constant as in Lemma \ref{lem:l2contract}. In particular, $m=1/2$ (the default choice of the present paper) minimizes the lower bound on the right hand side to yield $\alpha^{*}>d/2$.
\end{lemma}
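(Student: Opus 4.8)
The plan is to proceed term by term on the explicit decomposition displayed above for $\widetilde{\beta}_n(\widetilde{\Theta})=\beta_n(\widetilde{\Theta})+(\boldsymbol{\varepsilon}^T\boldsymbol{\Psi}_{\boldsymbol{j}})_{\boldsymbol{k}}$. The structural point I would exploit is that the noise-linear term $(\boldsymbol{\varepsilon}^T\boldsymbol{\Psi}_{\boldsymbol{j}})_{\boldsymbol{k}}$ carried by $\beta_n$ is \emph{exactly} removed, so $\widetilde{\beta}_n$ reduces to (i) the three ``interaction'' terms $(\boldsymbol{\Psi}_{\boldsymbol{j}}^T\boldsymbol{\Psi}_{\boldsymbol{j}})_{\boldsymbol{k},-\boldsymbol{k}}(\boldsymbol{\theta}_{\boldsymbol{j}}-\boldsymbol{\theta}_{\boldsymbol{j}}^0)_{-\boldsymbol{k}}$, $\sum_{\boldsymbol{a}\neq\boldsymbol{j}}(\boldsymbol{\Psi}_{\boldsymbol{j}}^T\boldsymbol{\Psi}_{\boldsymbol{a}})_{\boldsymbol{k},\cdot}(\boldsymbol{\theta}_{\boldsymbol{a}}-\boldsymbol{\theta}_{\boldsymbol{a}}^0)$ and $(\boldsymbol{\Psi}_{\boldsymbol{j}}^T\boldsymbol{B})_{\boldsymbol{k},\cdot}(\boldsymbol{\vartheta}-\boldsymbol{\vartheta}_0)$, and (ii) the deterministic bias term $-(\boldsymbol{\xi}^T\boldsymbol{\Psi}_{\boldsymbol{j}})_{\boldsymbol{k}}$. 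Since the design is deterministic and obeys \eqref{eq:cdf}, the only randomness left in (i) is the posterior draw of $(\boldsymbol{\vartheta},\boldsymbol{\theta})$, so it is enough to bound $|\widetilde{\beta}_n(\widetilde{\Theta})|$ on the $L_2$-contraction event: by Lemma \ref{lem:l2contract} there is a set of posterior probability at least $1-n^{-P_4}$ in $\mathrm{E}_0$-mean, uniformly in $f_0\in\mathcal{B}^{\boldsymbol{\alpha}}_{\infty,\infty}(R)$, on which $\|\boldsymbol{\vartheta}-\boldsymbol{\vartheta}_0\|$, $\|\boldsymbol{\theta}-\boldsymbol{\theta}^0\|$ and $|\sigma^2-\sigma_0^2|$ are bounded by a logarithmic multiple of the $L_2$-rate $\delta_n$; on the complement the asserted probability is already at most $n^{-P_4}$, with the \emph{same} $P_4$. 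So the whole proof reduces to the deterministic bound $|\widetilde{\beta}_n(\widetilde{\Theta})|\leq\tau_n\sqrt{n\log n}$ on that event.

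For the three interaction terms I would combine the continuous-approximation lemmas of Section \ref{sec:proof3} with \eqref{eq:cdf} and the orthonormality of the CDV system. Because $\langle\psi_{\boldsymbol{j},\boldsymbol{k}},\psi_{\boldsymbol{a},\boldsymbol{c}}\rangle=0$ for $(\boldsymbol{a},\boldsymbol{c})\neq(\boldsymbol{j},\boldsymbol{k})$ and $\langle\psi_{\boldsymbol{j},\boldsymbol{k}},\varphi_{\boldsymbol{N},\boldsymbol{m}}\rangle=0$, the off-diagonal and cross-level Gram blocks consist entirely of discretization error: by \eqref{eq:cdf} each entry is at most $n^{-1}$ times the variation of a product of two wavelets, which the $\eta$-regularity and compact support of the CDV wavelets control by an explicit power of $2^{j_l}$ and $2^{a_l}$, while $(\boldsymbol{\Psi}_{\boldsymbol{j}}^T\boldsymbol{\Psi}_{\boldsymbol{j}})_{\boldsymbol{k},\boldsymbol{k}}\asymp n$ by Lemma \ref{lem:phiphi}. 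Wavelet locality bounds, for each resolution $\boldsymbol{a}$, the number of translates $\psi_{\boldsymbol{a},\boldsymbol{c}}$ meeting $\operatorname{supp}\psi_{\boldsymbol{j},\boldsymbol{k}}$ by $\lesssim2^{\sum_l(a_l-j_l)_+}$, and $(\boldsymbol{\Psi}_{\boldsymbol{j}}^T\boldsymbol{B})$ has only $O(1)$ columns. Feeding these into an $\ell_\infty$--$\ell_1$ splitting against $\|\boldsymbol{\theta}-\boldsymbol{\theta}^0\|\le(\log n)^{O(1)}\delta_n$ and $\|\boldsymbol{\vartheta}-\boldsymbol{\vartheta}_0\|\lesssim\epsilon_n$ from Corollary \ref{cor:l2contract}, and summing over the rectangular truncation range $2^{\sum_l J_{n,l}}=(n/\log n)^m$, bounds the interaction part by $\sqrt{n\log n}$ times (a logarithmic multiple of) $(n/\log n)^{m}\,\delta_n\,\|f_0-K_{\boldsymbol{J}_n}(f_0)\|_\infty$; requiring this to be $o(\sqrt{n\log n})$ is exactly the \emph{first} inequality in the $\max$, which is increasing in $m$ because a deeper truncation creates more cross-level entries and a larger coarse-scale $L_2$-deviation.

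The bias term $(\boldsymbol{\xi}^T\boldsymbol{\Psi}_{\boldsymbol{j}})_{\boldsymbol{k}}$ is the delicate one. A crude $|(\boldsymbol{\xi}^T\boldsymbol{\Psi}_{\boldsymbol{j}})_{\boldsymbol{k}}|\le\|\boldsymbol{\xi}\|\,(\boldsymbol{\Psi}_{\boldsymbol{j}}^T\boldsymbol{\Psi}_{\boldsymbol{j}})_{\boldsymbol{k},\boldsymbol{k}}^{1/2}\asymp n\|f_0-K_{\boldsymbol{J}_n}(f_0)\|_\infty$ would force $\alpha^{*}>d/(2m)$, i.e.\ $\alpha^{*}>d$ at $m=\tfrac12$, and is too weak. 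Instead, since $\boldsymbol{\xi}$ is the evaluation at the design points of the wavelet tail $f_0-K_{\boldsymbol{J}_n}(f_0)$, one has $\langle\xi,\psi_{\boldsymbol{j},\boldsymbol{k}}\rangle=0$ whenever $j_l<J_{n,l}$ for all $l$, so $(\boldsymbol{\xi}^T\boldsymbol{\Psi}_{\boldsymbol{j}})_{\boldsymbol{k}}=\sum_{i=1}^n\xi(\boldsymbol{X}_i)\psi_{\boldsymbol{j},\boldsymbol{k}}(\boldsymbol{X}_i)-n\langle\xi,\psi_{\boldsymbol{j},\boldsymbol{k}}\rangle$ is again pure discretization error; estimating it via \eqref{eq:cdf} applied to $\xi\,\psi_{\boldsymbol{j},\boldsymbol{k}}$ (after a dyadic split of the tail into a finite part treated by the variation bound and a remainder treated by its negligible $L_\infty$-norm), together with Proposition \ref{prop:wavelet} for $\|f_0-K_{\boldsymbol{J}_n}(f_0)\|_\infty\lesssim\sum_l 2^{-\alpha_l J_{n,l}}$, yields a bound of order $\sqrt{n\log n}\,(n/\log n)^{(1-m)/2}\|f_0-K_{\boldsymbol{J}_n}(f_0)\|_\infty$ up to logs; forcing this to be $o(\sqrt{n\log n})$ is exactly the \emph{second} inequality in the $\max$, decreasing in $m$ since a deeper truncation shrinks the bias. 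Collecting, one may take in \eqref{eq:wn}
\begin{equation}\label{eq:tau}
\tau_n\;:=\;\Bigl((n/\log n)^{m}\,\delta_n+(n/\log n)^{(1-m)/2}\Bigr)\sum_{l=1}^d 2^{-\alpha_l J_{n,l}},
\end{equation}
which is $o(1)$, uniformly over $f_0\in\mathcal{B}^{\boldsymbol{\alpha}}_{\infty,\infty}(R)$, if and only if $\boldsymbol{\alpha}$ lies above the quoted $\max$; the two antagonistic exponents coincide at $m=\tfrac12$, which both minimises the bound and yields the threshold $\alpha^{*}>d/2$.

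The main obstacle is the accounting behind those two inequalities. For each pair of resolutions $\boldsymbol{j},\boldsymbol{a}$ one must track simultaneously (a) wavelet locality, i.e.\ the count of overlapping translates, (b) the variation of the relevant products of two wavelets entering the discretization bound, and (c) the anisotropic Besov decay $\|\boldsymbol{\theta}^0_{\boldsymbol{a}}\|_\infty\lesssim2^{-\sum_l\alpha_l a_l(1/d+1/(2\alpha^{*}))}$, so that after summation over the full rectangular truncation range the powers of $n$ combine to give \emph{exactly} the stated two conditions rather than a weaker one --- in particular so that the bias contribution stays at $\sqrt{n\log n}\,(n/\log n)^{(1-m)/2}\|f_0-K_{\boldsymbol{J}_n}(f_0)\|_\infty$ and not the $n\|f_0-K_{\boldsymbol{J}_n}(f_0)\|_\infty$ that Cauchy--Schwarz produces, which is precisely where the gain from $\alpha^{*}>d$ down to $\alpha^{*}>d/2$ comes from. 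Everything else --- the reduction to the $L_2$-contraction event, and the cancellation of the noise term, which is what allows this route to avoid the sub-optimality of the testing approach of Section \ref{sec:test} --- is routine given the auxiliary lemmas.
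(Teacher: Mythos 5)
Your global strategy matches the paper: reduce to the $L_2$-contraction event of Lemma \ref{lem:l2contract} (which supplies the same exponent $P_4$), observe that $\widetilde\beta_n$ is $\beta_n$ with the noise term $(\boldsymbol{\varepsilon}^T\boldsymbol{\Psi}_{\boldsymbol{j}})_{\boldsymbol{k}}$ removed, and bound the remaining four pieces (three interaction terms plus the bias $-(\boldsymbol{\xi}^T\boldsymbol{\Psi}_{\boldsymbol{j}})_{\boldsymbol{k}}$) deterministically on that event. From there, however, your route diverges from the paper's. The paper uses crude Cauchy--Schwarz with the row-norm bounds from Lemma \ref{lem:phiphi} (no locality counting), bounds the bias by the elementary $|(\boldsymbol{\xi}^T\boldsymbol{\Psi}_{\boldsymbol{j}})_{\boldsymbol{k}}|\leq\|\boldsymbol{\xi}\|_\infty\sum_i|\psi_{\boldsymbol{j},\boldsymbol{k}}(\boldsymbol{X}_i)|\lesssim n\prod_l 2^{-j_l/2}\sum_l 2^{-\alpha_lJ_{n,l}}$ without invoking $\langle\xi,\psi_{\boldsymbol{j},\boldsymbol{k}}\rangle=0$, and then \emph{balances} the (increasing in $j$) interaction bound $2^{\sum_l(j_l/2+J_{n,l})}\epsilon_n$ against the (decreasing in $j$) bias bound over the free resolution $j$; the quadratic in the first branch of the $\max$ comes from that balance, while the second branch comes from requiring the prior truncation $2^{\sum_l J_{n,l}}=(n/\log n)^m$ to dominate the oracle cutoff $2^{\sum_l J_{n,l}(\boldsymbol{\alpha})}=(n/\log n)^{d/(2\alpha^*+d)}$, \emph{not} from the bias estimate.

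There is a concrete gap in your proposal. You assert the interaction part is bounded by $\sqrt{n\log n}\,(n/\log n)^m\delta_n\|f_0-K_{\boldsymbol{J}_n}(f_0)\|_\infty$, and your $\tau_n$ in \eqref{eq:tau} carries the factor $\sum_l 2^{-\alpha_l J_{n,l}}$ on both terms. But the three interaction terms involve only the Gram blocks $\boldsymbol{\Psi}_{\boldsymbol{j}}^T\boldsymbol{\Psi}_{\boldsymbol{a}}$, $\boldsymbol{\Psi}_{\boldsymbol{j}}^T\boldsymbol{B}$ and the deviations $\boldsymbol{\theta}-\boldsymbol{\theta}^0$, $\boldsymbol{\vartheta}-\boldsymbol{\vartheta}_0$; they contain no trace of the truncation tail $\boldsymbol{\xi}$, so the factor $\|f_0-K_{\boldsymbol{J}_n}(f_0)\|_\infty$ has no provenance there. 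Wavelet locality plus Cauchy--Schwarz gives $\lesssim(n/\log n)^m\delta_n$ for the interaction part (uniformly over $j$), which is already $o(\sqrt{n\log n})$ at $m\le 1/2$ for any $\alpha^*>0$ and therefore does \emph{not} produce the first branch of the $\max$; that condition only materialises in the paper through the interaction--bias balance, which your split structure of $\tau_n$ foregoes. Likewise your bias treatment via $\langle\xi,\psi_{\boldsymbol{j},\boldsymbol{k}}\rangle=0$ and Lemma \ref{lem:discrete} is a genuinely different idea not in the paper, but it hinges on controlling $\int|\partial^d(\xi\psi_{\boldsymbol{j},\boldsymbol{k}})|$; summing the tail derivatives against the anisotropic Besov decay does not converge in the regime $\alpha_l(\frac1d+\frac1{2\alpha^*})<\frac32$ without actually carrying out the dyadic split you gesture at, and if you do carry it out (cut the tail at $2^L\asymp n^{\alpha^*/d}$ and put the remainder through the crude sup-norm bound) you land on an $m$-independent exponent $n^{1-\alpha^*/d}$, not the $(n/\log n)^{(1-m)/2}$ you quote. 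So although your final $\tau_n$ reproduces exactly the two inequalities in the $\max$, the accounting that is supposed to generate them does not: the first inequality is attributed to a bound that cannot hold as written, and the second to a step whose exponent comes out differently.
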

\begin{proof}
Let $2^{\sum_{l=1}^dJ_{n,l}}=(n/\log{n})^m$ for some $m\leq1$. We then need to determine the optimal $m$ such that the statement in the lemma is true and the lower limit on $\alpha^{*}$ (if there is any) is as small as possible.

Now by the triangle inequality, $|\widetilde{\beta}_n(\widetilde{\Theta})|$ is bounded above by
\begin{align*}
&|(\boldsymbol{\Psi}_{\boldsymbol{j}}^T\boldsymbol{\Psi}_{\boldsymbol{j}})_{\boldsymbol{k},-\boldsymbol{k}}(\boldsymbol{\theta}_{\boldsymbol{j}}
-\boldsymbol{\theta}_{\boldsymbol{j}}^0)_{-\boldsymbol{k}}|+
\sum_{\boldsymbol{a}\neq\boldsymbol{j}}|(\boldsymbol{\Psi}_{\boldsymbol{j}}^T\boldsymbol{\Psi}_{\boldsymbol{a}})_{\boldsymbol{k},\cdot}(\boldsymbol{\theta}_{\boldsymbol{a}}
-\boldsymbol{\theta}_{\boldsymbol{a}}^0)|\\
&\qquad+|(\boldsymbol{\Psi}_{\boldsymbol{j}}^T\boldsymbol{B})_{\boldsymbol{k},\cdot}(\boldsymbol{\vartheta}-\boldsymbol{\vartheta}_0)|+|(\boldsymbol{\xi}^T\boldsymbol{\Psi}_{\boldsymbol{j}})_{\boldsymbol{k}}|.
\end{align*}
Let $\epsilon_n=(n/\log{n})^{-\alpha^{*}/(2\alpha^{*}+d)}$. By the Cauchy-Schwarz inequality, the first term on the right hand side is bounded by $\|(\boldsymbol{\Psi}_{\boldsymbol{j}}^T\boldsymbol{\Psi}_{\boldsymbol{j}})_{\boldsymbol{k},-\boldsymbol{k}}\|\|\boldsymbol{\theta}-\boldsymbol{\theta}_0\|$. By Lemma \ref{lem:phiphi}, $\|(\boldsymbol{\Psi}_{\boldsymbol{j}}^T\boldsymbol{\Psi}_{\boldsymbol{j}})_{\boldsymbol{k},-\boldsymbol{k}}\|\lesssim\prod_{l=1}^d2^{3j_l/2}$ and hence by Corollary \ref{cor:l2contract}, the first term is $O_P(2^{3\sum_{l=1}^dj_l/2}\epsilon_n)$. Similarly, we can bound the third term using the same lemma and corollary by $\|(\boldsymbol{\Psi}_{\boldsymbol{j}}^T\boldsymbol{B})_{\boldsymbol{k},\cdot}\|\|\boldsymbol{\vartheta}-\boldsymbol{\vartheta}_0\|
\leq C_3\|\boldsymbol{\vartheta}-\boldsymbol{\vartheta}_0\|\prod_{l=1}^d2^{N_l/2}2^{j_l}=O_P(2^{\sum_{l=1}^dj_l}\epsilon_n)$.

To bound the last term, observe that $|(\boldsymbol{\xi}^T\boldsymbol{\Psi}_{\boldsymbol{j}})_k|\leq\|\boldsymbol{\xi}\|_\infty\sum_{i=1}^n|\psi_{\boldsymbol{j},\boldsymbol{k}}(\boldsymbol{X}_i)|$. By Proposition \ref{prop:wavelet}, we have that $\|\boldsymbol{\xi}\|_\infty\lesssim\sum_{l=1}^d2^{-\alpha_lJ_{n,l}}$ for $f_0\in\mathcal{B}^{\boldsymbol{\alpha}}_{\infty,\infty}(R)$. By \eqref{eq:phiphi3} in Lemma \ref{lem:phiphi}, $\sum_{i=1}^n|\psi_{\boldsymbol{j},\boldsymbol{k}}(\boldsymbol{X}_i)|\lesssim n\prod_{l=1}^d2^{-j_l/2}$ since $2^{\sum_{l=1}^dj_l}=o(n)$. Thus we conclude that $|(\boldsymbol{\xi}^T\boldsymbol{\Psi}_{\boldsymbol{j}})_{\boldsymbol{k}}|\lesssim n\prod_{l=1}^d2^{-j_l/2}\sum_{l=1}^d2^{-\alpha_lJ_{n,l}}$.

What remains now is to bound the second term. By another application of the Cauchy-Schwarz inequality, we can bound this term from above by $\|\boldsymbol{\theta}-\boldsymbol{\theta}_0\|\sum_{\boldsymbol{a}\neq\boldsymbol{j}}\|(\boldsymbol{\Psi}_{\boldsymbol{j}}^T\boldsymbol{\Psi}_{\boldsymbol{a}})_{\boldsymbol{k},\cdot}\|$. Then by Lemma \ref{lem:phiphi},
\begin{align*}
\sum_{\boldsymbol{a}\neq\boldsymbol{j}}\|(\boldsymbol{\Psi}_{\boldsymbol{j}}^T\boldsymbol{\Psi}_{\boldsymbol{a}})_{\boldsymbol{k},\cdot}\|&
\lesssim\sum_{\boldsymbol{a}\neq\boldsymbol{j}}\prod_{l=1}^d2^{a_l+j_l/2}.
\end{align*}
Define sets $\mathcal{T}_l,l=1,\dotsc,d$ such that $\mathcal{T}_l$ can be $\{a_l=j_l\}$ or $\{a_l\neq j_l\}$, but with the constraint that not all $\mathcal{T}_l$'s are $\{a_l=j_l\}$. Then the sum on the right hand side above consists of $2^d-1$ terms of the form
\begin{align*}
\sum_{a_1\in\mathcal{T}_1}\cdots\sum_{a_d\in\mathcal{T}_d}\prod_{l=1}^d2^{a_l+j_l/2}=\prod_{l=1}^d\sum_{a_l\in\mathcal{T}_l}2^{a_l+j_l/2}.
\end{align*}
If $\mathcal{T}_l=\{a_l=j_l\}$, then $\sum_{a_l=j_l}2^{a_l+j_l/2}=2^{3j_l/2}$; and if $\mathcal{T}_l=\{a_l\neq j_l\}$, then $\sum_{a_l\neq j_l}^{J_{n,l}}2^{a_l+j_l/2}\lesssim 2^{J_{n,l}+j_l/2}$. It then follows from Corollary \ref{cor:l2contract} that the second term in the upper bound of $|\widetilde{\beta}_n(\widetilde{\Theta})|$ above is $O_P(2^{\sum_{l=1}^d(j_l/2+J_{n,l})}\epsilon_n)$. Combining the bounds obtained, $|\widetilde{\beta}_n(\widetilde{\Theta})|$ is for any $N_l\leq j_l\leq J_{n,l}-1,l=1,\dotsc,d$,
\begin{align*}
O_P\left(2^{\sum_{l=1}^d(j_l/2+J_{n,l})}\epsilon_n\right)+O\left(\frac{n}{\prod_{l=1}^d2^{j_l/2}}\sum_{l=1}^d2^{-\alpha_lJ_{n,l}}\right).
\end{align*}
To optimize this upper bound with respect to $j_l,l=1,\dotsc,d$, we will take $2^{\sum_{l=1}^dj_l}=n2^{-\sum_{l=1}^dJ_{n,l}}\epsilon_n^{-1}\sum_{l=1}^d2^{-\alpha_lJ_{n,l}}$ to balance the two antagonistic terms and conclude that
\begin{align*}
|\widetilde{\beta}_n(\widetilde{\Theta})|=O_P\left(\sqrt{n}2^{\sum_{l=1}^dJ_{n,l}/2}\epsilon_n^{1/2}\sum_{l=1}^d2^{-\alpha_lJ_{n,l}/2}\right).
\end{align*}
To proceed, we set $2^{J_{n,l}}=2^{J/\alpha_l},l=1,\dotsc,d$ for some integer $J$, and it follows that $2^{\sum_{l=1}^dJ_{n,l}}=2^{dJ/\alpha^{*}}$ and $\sum_{l=1}^d2^{-\alpha_lJ_{n,l}/2}=d2^{-J/2}$. Then the above is with posterior probability tending to $1$ bounded above by
\begin{align}\label{eq:tau}
d\sqrt{n}2^{\sum_{l=1}^dJ_{n,l}\left(\frac{1}{2}-\frac{\alpha^{*}}{2d}\right)}\epsilon_n^{1/2}\lesssim\underbrace{n^{-\frac{2m(\alpha^{*})^2-d(m-1)\alpha^{*}-md^2}{2d(2\alpha^{*}+d)}}(\log{n})^{\kappa}}_{\tau_n(m)}\sqrt{n\log{n}},
\end{align}
where $\kappa$ is some constant not depending on $n$. For the lemma's statement to hold, the numerator in the exponent of $n$, i.e., $2m(\alpha^{*})^2-d(m-1)\alpha^{*}-md^2$ must be strictly greater than $0$. At the same time, we know from minimax theory and also from the proof of Theorem \ref{th:adapt} that $2^{J_{n,l}(\boldsymbol{\alpha})}\asymp(n/\log{n})^{\alpha^{*}/\{\alpha_l(2\alpha^{*}+d)\}}$ is the optimal number of wavelets at each $l=1,\dotsc,d$ and hence $2^{\sum_{l=1}^dJ_{n,l}(\boldsymbol{\alpha})}=(n/\log{n})^{d/(2\alpha^{*}+d)}$ must be less than $2^{\sum_{l=1}^dJ_{n,l}}=(n/\log{n})^m$, or equivalently $\alpha^{*}>(d/2)(m^{-1}-1)$. By combining the two lower bound constraints on $\alpha^{*}$, we will have
\begin{align*}
\alpha^{*}>d\max\left\{\frac{(m-1)+\sqrt{(m-1)^2+8m^2}}{4m},\frac{1}{2}\left(\frac{1}{m}-1\right)\right\}.
\end{align*}
Note that the first term inside the max operation is increasing in $m$ while the second term is decreasing. Therefore, the optimal $m$ that minimizes the right hand side can be found by equating these two opposing terms, and the solution to this ``minimax" problem on the right hand side is $m=1/2$ giving the smallest lower bound $\alpha^{*}>d/2$. This smallest lower bound is ensured through letting $\boldsymbol{\alpha}\in\mathbb{A}_{\boldsymbol{r}}\subseteq\mathbb{A}$ for any $\boldsymbol{r}\geq\boldsymbol{0}$ such that $1/d+1/(2\alpha^{*})>1/\alpha_l,l=1,\dotsc,d$ and hence $\alpha^{*}>d/2$ by summing both sides. Therefore, we can take $\tau_n$ to be $\tau_n(1/2)$ in \eqref{eq:tau} and $\tau_n\to0$ by virtue of the established lower bound on $\alpha^{*}$.
\end{proof}

Using the reduction technique discussed and in view of Lemma \ref{lem:betabound} above, we proceed to show that the last three terms in \eqref{eq:main} are negligible under the posterior. We would like to remind the reader the definitions of $\mathcal{W}_n$ in \eqref{eq:wn} and $\Omega_n(c)$ in \eqref{eq:sigma2bound} since they will be used repeatedly in the proofs below.

\begin{lemma}\label{lem:lemma1}
For small enough $\underline{\gamma}$ and large enough $\overline{\gamma}$, there exist constants $P_1,P_2>0$ such that uniformly in $f_0\in\mathcal{B}^{\boldsymbol{\alpha}}_{\infty,\infty}(R)$ with $\boldsymbol{\alpha}\in\mathbb{A}$ and any $0<R\leq R_0-1/2$,
\begin{align}
&\mathrm{E}_0\sup_{\sigma\in\mathcal{V}_n}\Pi(\mathcal{B}^c|\boldsymbol{Y},\sigma)\leq\frac{(\log{n})^d}{n^{P_1}}\label{eq:lemma1first},\\
&\mathrm{E}_0\sup_{\sigma\in\mathcal{V}_n}\Pi(\mathcal{C}^c|\boldsymbol{Y},\sigma)\leq\frac{(\log{n})^d}{n^{P_2}}\label{eq:lemma1second}.
\end{align}
\end{lemma}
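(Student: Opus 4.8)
Both parts will follow the same template: a union bound over the individual coefficients, followed by the regression-to-quasi-white-noise reduction of \eqref{eq:pratio} applied coordinate by coordinate. Since the prior forces $\theta_{\boldsymbol{j},\boldsymbol{k}}=0$ once some $j_l\ge J_{n,l}$, we have $\Pi(\mathcal{B}^c\mid\boldsymbol{Y},\sigma)\le\sum_{(\boldsymbol{j},\boldsymbol{k})\in\mathcal{J}_n(\underline{\gamma})^c}\Pi(\theta_{\boldsymbol{j},\boldsymbol{k}}\ne0\mid\boldsymbol{Y},\sigma)$ and $\Pi(\mathcal{C}^c\mid\boldsymbol{Y},\sigma)\le\sum_{(\boldsymbol{j},\boldsymbol{k})\in\mathcal{J}_n(\overline{\gamma})}\Pi(\theta_{\boldsymbol{j},\boldsymbol{k}}=0\mid\boldsymbol{Y},\sigma)$, with at most $\prod_l 2^{J_{n,l}}=\sqrt{n/\log n}$ terms in the first sum and at most $\prod_l 2^{J_{n,l}(\boldsymbol{\alpha})}=(n/\log n)^{d/(2\alpha^{*}+d)}$ in the second (using $\mathcal{J}_n(\overline{\gamma})\subset\mathcal{J}_n(\underline{\gamma})\subset\mathcal{I}_n(\boldsymbol{\alpha})$ as in \eqref{eq:important}). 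First I would set up two families of ``good'' events: the posterior event $\mathcal{W}_n$ of \eqref{eq:wn}, which by Lemma \ref{lem:betabound} satisfies $\mathrm{E}_0\Pi(\mathcal{W}_n^c\mid\boldsymbol{Y},\sigma)\le n^{-P_4}$ for each coefficient; and the $P_0$-event $\Omega_n(c)$ of \eqref{eq:sigma2bound}, for which, since $(\boldsymbol{\varepsilon}^T\boldsymbol{\Psi}_{\boldsymbol{j}})_{\boldsymbol{k}}$ is (conditionally on the design) centered Gaussian with variance $\sigma_0^2(\boldsymbol{\Psi}_{\boldsymbol{j}}^T\boldsymbol{\Psi}_{\boldsymbol{j}})_{\boldsymbol{k},\boldsymbol{k}}$, a Gaussian tail bound plus a union bound over the $\le\prod_l 2^{j_l}$ coefficients per level and the $O((\log n)^d)$ levels gives $P_0(\Omega_n(c)^c)\lesssim(\log n)^dn^{-c/2}$, with $c>0$ small, to be pinned down below. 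On $\mathcal{W}_n\cap\Omega_n(c)$, writing $g_{\boldsymbol{j},\boldsymbol{k}}:=(\boldsymbol{\Psi}_{\boldsymbol{j}}^T\boldsymbol{\Psi}_{\boldsymbol{j}})_{\boldsymbol{k},\boldsymbol{k}}\asymp n$ by Lemma \ref{lem:phiphi}, the ``quasi-white-noise observation'' $\widehat{Y}_{\boldsymbol{j},\boldsymbol{k}}:=\theta_{\boldsymbol{j},\boldsymbol{k}}^0-\beta_n(\widetilde{\Theta})/g_{\boldsymbol{j},\boldsymbol{k}}=\theta_{\boldsymbol{j},\boldsymbol{k}}^0+(\boldsymbol{\varepsilon}^T\boldsymbol{\Psi}_{\boldsymbol{j}})_{\boldsymbol{k}}/g_{\boldsymbol{j},\boldsymbol{k}}-\widetilde{\beta}_n(\widetilde{\Theta})/g_{\boldsymbol{j},\boldsymbol{k}}$ satisfies $\widehat{Y}_{\boldsymbol{j},\boldsymbol{k}}=\theta_{\boldsymbol{j},\boldsymbol{k}}^0+O(\sqrt{\log n/n})$ and, by \eqref{eq:infbesov} and $R\le R_0-\tfrac12$, $|\widehat{Y}_{\boldsymbol{j},\boldsymbol{k}}|\le R+o(1)<R_0$.

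Next I would carry out the per-coefficient reduction. Fixing $(\boldsymbol{j},\boldsymbol{k})$ and conditioning on $\widetilde{\Theta}$ in the representation around \eqref{eq:pratio}, the conditional posterior odds of $\{\theta_{\boldsymbol{j},\boldsymbol{k}}\ne0\}$ versus $\{\theta_{\boldsymbol{j},\boldsymbol{k}}=0\}$ equals $\tfrac{\omega_{\boldsymbol{j},n}}{1-\omega_{\boldsymbol{j},n}}e^{g_{\boldsymbol{j},\boldsymbol{k}}\widehat{Y}_{\boldsymbol{j},\boldsymbol{k}}^2/(2\sigma^2)}\int e^{-g_{\boldsymbol{j},\boldsymbol{k}}(\theta-\widehat{Y}_{\boldsymbol{j},\boldsymbol{k}})^2/(2\sigma^2)}p(\theta)\,d\theta$; averaging over $\widetilde{\Theta}\in\mathcal{W}_n$ against the nonnegative weight $K_n(\widetilde{\Theta})\,d\Pi(\widetilde{\Theta})$ and using $\int fg\,d\Pi\le\big(\sup(f/h)\big)\int hg\,d\Pi$ preserves this bound, so on $\Omega_n(c)$
\begin{align*}
\Pi(\theta_{\boldsymbol{j},\boldsymbol{k}}\ne0\mid\boldsymbol{Y},\sigma)&\le\Pi(\mathcal{W}_n^c\mid\boldsymbol{Y},\sigma)+\sup_{\widetilde{\Theta}\in\mathcal{W}_n}\frac{\omega_{\boldsymbol{j},n}}{1-\omega_{\boldsymbol{j},n}}\,p_{\mathrm{max}}\sqrt{\tfrac{2\pi\sigma^2}{g_{\boldsymbol{j},\boldsymbol{k}}}}\,e^{g_{\boldsymbol{j},\boldsymbol{k}}\widehat{Y}_{\boldsymbol{j},\boldsymbol{k}}^2/(2\sigma^2)},\\
\Pi(\theta_{\boldsymbol{j},\boldsymbol{k}}=0\mid\boldsymbol{Y},\sigma)&\le\Pi(\mathcal{W}_n^c\mid\boldsymbol{Y},\sigma)+\sup_{\widetilde{\Theta}\in\mathcal{W}_n}\frac{1-\omega_{\boldsymbol{j},n}}{\omega_{\boldsymbol{j},n}}\,\frac{1}{c_0p_{\mathrm{min}}}\sqrt{\tfrac{g_{\boldsymbol{j},\boldsymbol{k}}}{2\pi\sigma^2}}\,e^{-g_{\boldsymbol{j},\boldsymbol{k}}\widehat{Y}_{\boldsymbol{j},\boldsymbol{k}}^2/(2\sigma^2)},
\end{align*}
where the first line bounds the slab integral by $p_{\mathrm{max}}\sqrt{2\pi\sigma^2/g_{\boldsymbol{j},\boldsymbol{k}}}$ and the second restricts it to a small fixed neighbourhood of $\widehat{Y}_{\boldsymbol{j},\boldsymbol{k}}\in(-R_0,R_0)$ and invokes \eqref{eq:density} (producing a constant $c_0>0$).

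For \eqref{eq:lemma1first}: $(\boldsymbol{j},\boldsymbol{k})\in\mathcal{J}_n(\underline{\gamma})^c$ forces $|\theta_{\boldsymbol{j},\boldsymbol{k}}^0|\le\underline{\gamma}^d(\log n/n)^{1/2}$, so on $\mathcal{W}_n\cap\Omega_n(c)$, for any fixed $\delta>0$, $g_{\boldsymbol{j},\boldsymbol{k}}\widehat{Y}_{\boldsymbol{j},\boldsymbol{k}}^2\le(1+\delta)\sigma_0^2\big(2\log\prod_l 2^{j_l}+c\log n\big)+C(1+\delta^{-1})\underline{\gamma}^{2d}\log n+o(\log n)$, and since $\sigma\in\mathcal{V}_n$ forces $\sigma_0^2/\sigma^2\to1$, exponentiating gives $e^{g_{\boldsymbol{j},\boldsymbol{k}}\widehat{Y}_{\boldsymbol{j},\boldsymbol{k}}^2/(2\sigma^2)}\le(\prod_l 2^{j_l})^{1+\delta+o(1)}n^{(1+\delta)c/2+C'(1+\delta^{-1})\underline{\gamma}^{2d}+o(1)}$. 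Plugging in $\omega_{\boldsymbol{j},n}\le\prod_l 2^{-j_l(1+\mu_l)}$ and $g_{\boldsymbol{j},\boldsymbol{k}}\gtrsim n$, summing over $\boldsymbol{k}$ (a factor $\prod_l 2^{j_l}$) and then over $\boldsymbol{j}$ gives a bound $\sum_{\boldsymbol{j}}\prod_l 2^{j_l(1+\delta-\mu_l)}\cdot n^{-1/2+(1+\delta)c/2+C'(1+\delta^{-1})\underline{\gamma}^{2d}+o(1)}\lesssim(n/\log n)^{(1+\delta-\min_l\mu_l)_{+}}n^{-1/2+(1+\delta)c/2+C'(1+\delta^{-1})\underline{\gamma}^{2d}+o(1)}$; since $\mu_l>1/2$, choosing $\delta$, then $c$, then $\underline{\gamma}$ small enough makes the power of $n$ strictly negative. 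The $\Pi(\mathcal{W}_n^c\mid\boldsymbol{Y},\sigma)$ terms, summed over the $\le\sqrt{n/\log n}$ coefficients, contribute $\le\sqrt{n/\log n}\,n^{-P_4}$ via Lemma \ref{lem:betabound}, absorbed once $P_4>1/2$ (which is admissible, $P_4$ being the $L_2$-contraction exponent of Lemma \ref{lem:l2contract}, enlargeable by enlarging the contraction constant), and $P_0(\Omega_n(c)^c)\lesssim(\log n)^dn^{-c/2}$; adding these yields \eqref{eq:lemma1first}. For \eqref{eq:lemma1second} the same reduction applies in reverse: for $(\boldsymbol{j},\boldsymbol{k})\in\mathcal{J}_n(\overline{\gamma})\subset\mathcal{I}_n(\boldsymbol{\alpha})$ the threshold in \eqref{eq:Jngamma}, combined with $j_l<J_{n,l}(\boldsymbol{\alpha})$ (hence $2^{-\alpha_lj_l(1/d+1/(2\alpha^{*}))}\ge2^{\alpha_l(1/d+1/(2\alpha^{*}))}(\log n/n)^{1/(2d)}$, with $\alpha_l(1/d+1/(2\alpha^{*}))>1$ on $\mathbb{A}$), bounds $|\theta_{\boldsymbol{j},\boldsymbol{k}}^0|$ below by a multiple of $\sqrt{\log n/n}$ that grows with $\overline{\gamma}$ along coordinates below the finest resolution; thus on $\mathcal{W}_n\cap\Omega_n(c)$, $|\widehat{Y}_{\boldsymbol{j},\boldsymbol{k}}|\ge\tfrac12|\theta_{\boldsymbol{j},\boldsymbol{k}}^0|$ and $e^{-g_{\boldsymbol{j},\boldsymbol{k}}\widehat{Y}_{\boldsymbol{j},\boldsymbol{k}}^2/(2\sigma^2)}\le n^{-c_5\overline{\gamma}^2}$. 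With $\omega_{\boldsymbol{j},n}^{-1}\le n^\lambda$ and $g_{\boldsymbol{j},\boldsymbol{k}}^{1/2}\lesssim n^{1/2}$, each term is $\lesssim\Pi(\mathcal{W}_n^c\mid\boldsymbol{Y},\sigma)+n^{\lambda+1/2-c_5\overline{\gamma}^2}$; summing over the $\le\sqrt{n/\log n}$ coefficients, choosing $\overline{\gamma}$ large enough to beat $n^{\lambda+1/2}$, and adding Lemma \ref{lem:betabound} and $P_0(\Omega_n(c)^c)$ gives \eqref{eq:lemma1second}.

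The main obstacle is the simultaneous balancing of these two sums against opposite constraints. In \eqref{eq:lemma1first} the exponential $e^{g_{\boldsymbol{j},\boldsymbol{k}}\widehat{Y}_{\boldsymbol{j},\boldsymbol{k}}^2/(2\sigma^2)}$ contributes, via $\Omega_n(c)$ (through $e^{(\boldsymbol{\varepsilon}^T\boldsymbol{\Psi}_{\boldsymbol{j}})_{\boldsymbol{k}}^2/(2\sigma^2g_{\boldsymbol{j},\boldsymbol{k}})}$), a whole extra factor $\prod_l 2^{j_l}$, which together with the $\prod_l 2^{j_l}$ count of $\boldsymbol{k}$'s must be absorbed by $\omega_{\boldsymbol{j},n}\le\prod_l 2^{-j_l(1+\mu_l)}$; this is precisely why $\mu_l>1/2$ (and $c<2\min_l\mu_l-1$) is needed, and why the consistency $\sigma\in\mathcal{V}_n$, forcing $\sigma_0^2/\sigma^2\to1$, cannot be dispensed with. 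In \eqref{eq:lemma1second} the prior odds against the spike contribute $n^{\lambda+1/2}$, which the separation $\overline{\gamma}$ must beat; the delicate point is that along coordinates lying in the thin band just below the finest admissible resolution $J_{n,l}(\boldsymbol{\alpha})$ the detection threshold is only a fixed multiple of $\sqrt{\log n/n}$, so one must exploit that such $\boldsymbol{j}$ have $\prod_l 2^{j_l}$ of order $(n/\log n)^{d/(2\alpha^{*}+d)}\ll\sqrt{n}$, whence $\Omega_n(c)$ controls the noise there by a correspondingly smaller multiple of $\sqrt{\log n/n}$. Making this resolution-by-resolution bookkeeping go through while keeping all constants uniform over $\boldsymbol{\alpha}\in\mathbb{A}$ is the heart of the argument.
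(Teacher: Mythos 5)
Your proposal follows the same route as the paper: a union bound over coefficients, decomposition via $\mathcal{W}_n$ of \eqref{eq:wn} and the $P_0$-event $\Omega_n(c)$ of \eqref{eq:sigma2bound}, the quasi-white-noise reduction around \eqref{eq:pratio} giving a per-coefficient posterior-odds bound, the slab integral bounded above by $p_{\max}\sqrt{2\pi\sigma^2/g_{\boldsymbol{j},\boldsymbol{k}}}$ and below via $p_{\min}$ and a Gaussian CDF factor, control of $g_{\boldsymbol{j},\boldsymbol{k}}\widehat Y_{\boldsymbol{j},\boldsymbol{k}}^2$ by splitting $|\theta^0|$ above or below the threshold, and finally summation using $\omega_{\boldsymbol{j},n}\le\prod_l 2^{-j_l(1+\mu_l)}$ and $\omega_{\boldsymbol{j},n}\ge n^{-\lambda}$. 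The paper reaches the same conclusion by explicitly isolating the cross-term constant $\kappa(\underline\gamma)$ instead of your $(1+\delta)$ Young's splitting, but these are cosmetically different ways of doing the same computation.

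Two points of bookkeeping deserve attention. First, you write $\sum_{\boldsymbol{j}}\prod_l 2^{j_l(1+\delta-\mu_l)}\lesssim(n/\log n)^{(1+\delta-\min_l\mu_l)_+}$, dropping the factor $\tfrac12$ that comes from $\prod_l 2^{J_{n,l}}=\sqrt{n/\log n}$; as written, for $\mu_{\min}$ just above $1/2$ the resulting power of $n$ is roughly $\delta+\text{(small positives)}>0$ and the argument does not close. With the correct bound $\prod_l 2^{J_{n,l}(1+\delta-\mu_l)_+}\le(n/\log n)^{\frac12(1+\delta-\mu_{\min})_+}$ the exponent becomes $\le -\tfrac14+\tfrac{\delta}{2}+\text{(small positives)}$, which is negative exactly under $\mu_l>1/2$, and the reasoning goes through — this is just a slip, not a conceptual gap, since you already invoke $\prod_l 2^{J_{n,l}}=\sqrt{n/\log n}$ two lines earlier. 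Second, your observation that the per-coefficient $\Pi(\mathcal{W}_n^c\mid\boldsymbol{Y},\sigma)$ terms, summed over up to $\sqrt{n/\log n}$ coefficients, require $P_4>1/2$ (achievable by enlarging the contraction constant $M$ in Lemma \ref{lem:l2contract}) is a point the paper leaves implicit and is worth making explicit — good catch. The ``resolution-by-resolution bookkeeping'' you describe at the end of the second part is not actually needed: the paper simply observes that $\log 2^{\sum_l j_l}\lesssim\log n$ uniformly for $j_l\le J_{n,l}$, so the noise term $\sigma_0\sqrt{2\log 2^{\sum_l j_l}+\log n}$ is uniformly $O(\sqrt{\log n})$ and taking $\overline\gamma$ large beats it once and for all.
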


\begin{proof}[\textit{Proof of Lemma \ref{lem:lemma1}}]
We first prove \eqref{eq:lemma1first}. By \eqref{eq:abc}, we can write $\mathcal{B}^c=[\mathcal{P}\cap\mathcal{J}_n(\underline{\gamma})^c\neq\emptyset]=\cup_{(\boldsymbol{j},\boldsymbol{k})\in\mathcal{J}_n(\underline{\gamma})^c}[\theta_{\boldsymbol{j},\boldsymbol{k}}\neq0]$.  Recall that $\mathcal{V}_n$ is a shrinking neighborhood of $\sigma_0$, and $\sigma\in\mathcal{V}_n$ implies that $\sigma^2=\sigma^2_0+o(1)$. Using the fact that the posterior probability is bounded by $1$, we have $\mathrm{E}_0\sup_{\sigma\in\mathcal{V}_n}\Pi(\mathcal{P}\cap\mathcal{J}_n(\underline{\gamma})^c\neq\emptyset|\boldsymbol{Y},\sigma)$ is bounded above by
\begin{align}\label{eq:theta0}
\mathrm{E}_0\sup_{\sigma\in\mathcal{V}_n}\sum_{(\boldsymbol{j},\boldsymbol{k})\in\mathcal{J}_n(\underline{\gamma})^c}\Pi(\theta_{\boldsymbol{j},\boldsymbol{k}}\neq0|\boldsymbol{Y},\sigma)\mathbbm{1}_{\Omega_n(\underline{\gamma})}
+P_0[\Omega_n(\underline{\gamma})^c].
\end{align}
To bound the last term, observe that $\widetilde{\beta}_n(\widetilde{\Theta})-\beta_n(\widetilde{\Theta})=(\boldsymbol{\varepsilon}^T\boldsymbol{\Psi}_{\boldsymbol{j}})_k$ is Gaussian under $P_0$, with mean $0$ and variance $\sigma_0^2(\boldsymbol{\Psi}_{\boldsymbol{j}}^T\boldsymbol{\Psi}_{\boldsymbol{j}})_{\boldsymbol{k},\boldsymbol{k}}$. Thus, using the fact that $P(|\sum_i\varepsilon_ia_i|>z)\leq2e^{-z^2/(2\sigma^2\|\boldsymbol{a}\|^2)}$ for $z>0$ and constants $\boldsymbol{a}=(a_1,\dotsc,a_n)^T$ when $\varepsilon_i,i=1,\dotsc,n$ are i.i.d.~Gaussian with mean $0$ and variance $\sigma^2$, we have $P_0[\Omega_n(\underline{\gamma})^c]$ is bounded above by
\begin{align}\label{eq:comega}
&\sum_{j_1=N_1}^{J_{n,1}-1}\sum_{k_1=0}^{2^{j_1}-1}\cdots\sum_{j_d=N_d}^{J_{n,d}-1}\sum_{k_d=0}^{2^{j_d}-1}
P_0\left[\frac{|(\boldsymbol{\varepsilon}^T\boldsymbol{\Psi}_{\boldsymbol{j}})_{\boldsymbol{k}}|}{\sigma_0(\boldsymbol{\Psi}_{\boldsymbol{j}}^T\boldsymbol{\Psi}_{\boldsymbol{j}})^{1/2}_{\boldsymbol{k},\boldsymbol{k}}}
>(2\log{2^{\sum_{l=1}^dj_l}}+\underline{\gamma}\log{n})^{1/2}\right]\nonumber\\
&\qquad\leq2(2^d-1)n^{-\underline{\gamma}/2}\prod_{l=1}^d(J_{n,l}-N_l)\lesssim n^{-\underline{\gamma}/2}(\log{n})^{d}.
\end{align}
The right hand side above approaches $0$ as $n\rightarrow\infty$ for any $\underline{\gamma}>0$. Recall that in \eqref{eq:pratio} above, we defined
\begin{align*}
I_n(\mathcal{U},\widetilde{\Theta}):=\int_{\mathcal{U}}\exp\left\{-\frac{(\boldsymbol{\Psi}_{\boldsymbol{j}}^T\boldsymbol{\Psi}_{\boldsymbol{j}})_{\boldsymbol{k},\boldsymbol{k}}}{2\sigma^2}
\left[\theta-\theta_{\boldsymbol{j},\boldsymbol{k}}^0+\frac{\beta_n(\widetilde{\Theta})}{(\boldsymbol{\Psi}_{\boldsymbol{j}}^T\boldsymbol{\Psi}_{\boldsymbol{j}})_{\boldsymbol{k},\boldsymbol{k}}}\right]^2\right\}d\pi(\theta).
\end{align*}
To bound the first term in \eqref{eq:theta0}, observe that for $(\boldsymbol{j},\boldsymbol{k})\in\mathcal{J}_n(\underline{\gamma})^c$, we can upper bound $\Pi(\theta_{\boldsymbol{j},\boldsymbol{k}}\neq0|\boldsymbol{Y},\sigma)\mathbbm{1}_{\Omega_n(\underline{\gamma})}$ by
\begin{align}\label{eq:QU}
&\Pi(\theta_{\boldsymbol{j},\boldsymbol{k}}\neq0,(\boldsymbol{\vartheta},\boldsymbol{\theta}_{-(\boldsymbol{j},\boldsymbol{k})})\in\mathcal{W}_n|\boldsymbol{Y},\sigma)\mathbbm{1}_{\Omega_n(\underline{\gamma})}
+\Pi(\mathcal{W}_n^c|\boldsymbol{Y},\sigma)\nonumber\\
&\qquad\leq\frac{\int_{\mathcal{W}_n}I_n([\theta_{\boldsymbol{j},\boldsymbol{k}}\neq0],\widetilde{\Theta})K_n(\widetilde{\Theta})d\Pi(\widetilde{\Theta})}{\int_{\mathcal{W}_n}I_n([\theta_{\boldsymbol{j},\boldsymbol{k}}=0],\widetilde{\Theta})K_n(\widetilde{\Theta})d\Pi(\widetilde{\Theta})}\mathbbm{1}_{\Omega_n(\underline{\gamma})}+O_{P_0}\left(n^{-B}\right),
\end{align}
where $O_{P_0}\left(n^{-B}\right)$ for some constant $B>0$ follows from Lemma \ref{lem:betabound}. When $\theta_{\boldsymbol{j},\boldsymbol{k}}\neq0$, then $d\pi(\theta_{\boldsymbol{j},\boldsymbol{k}})=\omega_{\boldsymbol{j},n}p(\theta_{\boldsymbol{j},\boldsymbol{k}})d\theta_{\boldsymbol{j},\boldsymbol{k}}$. Since $p_{\mathrm{max}}=\sup_{x\in\mathbb{R}}p(x)<\infty$ by assumption, we can upper bound $I_n([\theta_{\boldsymbol{j},\boldsymbol{k}}\neq0],\widetilde{\Theta})$ by $\omega_{\boldsymbol{j},n}p_{\mathrm{max}}$ times
\begin{align*}
\int_{-\infty}^\infty\exp\left\{-\frac{(\boldsymbol{\Psi}_{\boldsymbol{j}}^T\boldsymbol{\Psi}_{\boldsymbol{j}})_{\boldsymbol{k},\boldsymbol{k}}}{2\sigma^2}\left[x+\frac{\beta_n(\widetilde{\Theta})}{(\boldsymbol{\Psi}_{\boldsymbol{j}}^T\boldsymbol{\Psi}_{\boldsymbol{j}})_{\boldsymbol{k},\boldsymbol{k}}}\right]^2\right\}dx
=\frac{\sqrt{2\pi\sigma^2}}{\sqrt{(\boldsymbol{\Psi}_{\boldsymbol{j}}^T\boldsymbol{\Psi}_{\boldsymbol{j}})_{\boldsymbol{k},\boldsymbol{k}}}}.
\end{align*}
Therefore, in view of the lower bound in Lemma \ref{lem:phiphi},
\begin{align}\label{eq:Ujk}
\int_{\mathcal{W}_n}I_n([\theta_{\boldsymbol{j},\boldsymbol{k}}\neq0],\widetilde{\Theta})K_n(\widetilde{\Theta})d\Pi(\widetilde{\Theta})\lesssim
\omega_{\boldsymbol{j},n}n^{-1/2}\sigma\int_{\mathcal{W}_n}
K_n(\widetilde{\Theta})d\Pi(\widetilde{\Theta}).
\end{align}
Now, $\int_{\mathcal{W}_n}I_n([\theta_{\boldsymbol{j},\boldsymbol{k}}=0],\widetilde{\Theta})K_n(\widetilde{\Theta})d\Pi(\widetilde{\Theta})$ is $1-\omega_{\boldsymbol{j},n}$ times
\begin{align}\label{eq:Qjk}
&\int_{\mathcal{W}_n}\exp\left\{-\frac{(\boldsymbol{\Psi}_{\boldsymbol{j}}^T\boldsymbol{\Psi}_{\boldsymbol{j}})_{\boldsymbol{k},\boldsymbol{k}}}{2\sigma^2}\left[\theta_{\boldsymbol{j},\boldsymbol{k}}^0-\frac{\beta_n(\widetilde{\Theta})}{(\boldsymbol{\Psi}_{\boldsymbol{j}}^T\boldsymbol{\Psi}_{\boldsymbol{j}})_{\boldsymbol{k},\boldsymbol{k}}}\right]^2\right\}
K_n(\widetilde{\Theta})d\Pi(\widetilde{\Theta}).
\end{align}
To lower bound the expression above, we proceed by lower bounding the first exponential factor. By definition, we have $|\theta_{\boldsymbol{j},\boldsymbol{k}}^0|\leq\underline{\gamma}\sqrt{\log{n}/n}$ for $(\boldsymbol{j},\boldsymbol{k})\in\mathcal{J}_n(\underline{\gamma})^c$. Under $\mathcal{W}_n$ and $\Omega_n(\underline{\gamma})$, we can use the triangle inequality to bound $\sqrt{(\boldsymbol{\Psi}_{\boldsymbol{j}}^T\boldsymbol{\Psi}_{\boldsymbol{j}})_{\boldsymbol{k},\boldsymbol{k}}}\left|\theta_{\boldsymbol{j},\boldsymbol{k}}^0-\frac{\beta_n(\widetilde{\Theta})}{(\boldsymbol{\Psi}_{\boldsymbol{j}}^T\boldsymbol{\Psi}_{\boldsymbol{j}})_{\boldsymbol{k},\boldsymbol{k}}}\right|$ from above by
\begin{align*}
&\sqrt{(\boldsymbol{\Psi}_{\boldsymbol{j}}^T\boldsymbol{\Psi}_{\boldsymbol{j}})_{\boldsymbol{k},\boldsymbol{k}}}|\theta_{\boldsymbol{j},\boldsymbol{k}}^0|+\frac{|\widetilde{\beta}_n(\widetilde{\Theta})|}{\sqrt{(\boldsymbol{\Psi}_{\boldsymbol{j}}^T\boldsymbol{\Psi}_{\boldsymbol{j}})_{\boldsymbol{k},\boldsymbol{k}}}}
+\frac{|\widetilde{\beta}_n(\widetilde{\Theta})-\beta_n(\widetilde{\Theta})|}{\sqrt{(\boldsymbol{\Psi}_{\boldsymbol{j}}^T\boldsymbol{\Psi}_{\boldsymbol{j}})_{\boldsymbol{k},\boldsymbol{k}}}}\\
&\qquad\leq(\sqrt{C_2}\underline{\gamma}+\tau_n/\sqrt{C_1})\sqrt{\log{n}}+\sigma_0\sqrt{2\log{2^{\sum_{l=1}^dj_l}}+\underline{\gamma}\log{n}}\\
&\qquad\leq2\sqrt{C_2}\underline{\gamma}\sqrt{\log{n}}+\sigma_0(2\log{2^{\sum_{l=1}^dj_l}}+\underline{\gamma}\log{n})^{1/2},
\end{align*}
for large enough $n$ because $\tau_n\rightarrow0$ as $n\rightarrow\infty$ (from \eqref{eq:tau}). Note also that by Lemma \ref{lem:phiphi}, $\sqrt{C_1n}\leq(\boldsymbol{\Psi}_{\boldsymbol{j}}^T\boldsymbol{\Psi}_{\boldsymbol{j}})_{\boldsymbol{k},\boldsymbol{k}}^{1/2}\leq\sqrt{C_2n}$ for some constants $C_1,C_2>0$, because $2^{\sum_{l=1}^dj_l}\leq2^{\sum_{l=1}^dJ_{n,l}}=\sqrt{n/\log{n}}=o(n)$ by assumption. By squaring both sides, $(\boldsymbol{\Psi}_{\boldsymbol{j}}^T\boldsymbol{\Psi}_{\boldsymbol{j}})_{\boldsymbol{k},\boldsymbol{k}}\left[\theta_{\boldsymbol{j},\boldsymbol{k}}^0-\frac{\beta_n(\widetilde{\Theta})}{(\boldsymbol{\Psi}_{\boldsymbol{j}}^T\boldsymbol{\Psi}_{\boldsymbol{j}})_{\boldsymbol{k},\boldsymbol{k}}}\right]^2$ is bounded above by
\begin{align*}
&4C_2\underline{\gamma}^2\log{n}+\sigma_0^2(2\log{2^{\sum_{l=1}^dj_l}}+\underline{\gamma}\log{n})\\
&\qquad+4\sigma_0\underline{\gamma}\sqrt{C_2\log{n}
(2\log{2^{\sum_{l=1}^dj_l}}+\underline{\gamma}\log{n})}\leq\kappa(\underline{\gamma})\log{n}+2\sigma_0^2\log{2^{\sum_{l=1}^dj_l}},
\end{align*}
where $\kappa(\underline{\gamma})=4C_2\underline{\gamma}^2+4\sigma_0\sqrt{C_2}\underline{\gamma}^{3/2}+(4\sigma_0\sqrt{2C_2}+\sigma_0^2)\underline{\gamma}$, and the last inequality follows from $\sqrt{a+b}\leq\sqrt{a}+\sqrt{b}$. Thus, $\int_{\mathcal{W}_n}I_n([\theta_{\boldsymbol{j},\boldsymbol{k}}=0],\widetilde{\Theta})K_n(\widetilde{\Theta})d\Pi(\widetilde{\Theta})$ is bounded below by
\begin{align}\label{eq:lowerzero}
(1-\omega_{\boldsymbol{j},n})\exp{\left(-\kappa(\underline{\gamma})\frac{\log{n}}{2\sigma^2}-\frac{\sigma_0^2}{\sigma^2}\log{2^{\sum_{l=1}^dj_l}}\right)}
\int_{\mathcal{W}_n}K_n(\widetilde{\Theta})d\Pi(\widetilde{\Theta}).
\end{align}
By assumption, $\omega_{\boldsymbol{j},n}\leq\min\{2^{-\sum_{l=1}^dj_l(1+\mu_l)},1/2\}$ and $\mu_{\mathrm{min}}:=\min_{1\leq l\leq d}\mu_l>1/2$. Using the fact that $x/(1-x)\leq2x$ for $0\leq x\leq0.5$ with the upper and lower bounds of \eqref{eq:Ujk} and \eqref{eq:lowerzero}, we can upper bound the first term of \eqref{eq:theta0} up to some constant multiple by $n^{\frac{1}{2\sigma^2}\kappa(\underline{\gamma})-\frac{1}{2}}\sigma$ times
\begin{align*}
\sum_{j_1=N_1}^{J_{n,1}}\sum_{k_1=0}^{2^{j_1}-1}\cdots\sum_{j_d=N_d}^{J_{n,d}}\sum_{k_d=0}^{2^{j_d}-1}
2^{\frac{\sigma_0^2}{\sigma^2}\sum_{l=1}^dj_l}2\omega_{\boldsymbol{j},n}\lesssim n^{\frac{1}{2\sigma^2}\kappa(\underline{\gamma})-\frac{1}{2}}\sigma\prod_{l=1}^d\sum_{j_l=N_l}^{J_{n,l}}2^{j_l\left(\frac{\sigma_0^2}{\sigma^2}-\mu_l\right)}.
\end{align*}
Now if $\sigma_0^2/\sigma^2>\mu_l$, we have $\sum_{j_l=N_l}^{J_{n,l}}2^{j_l(\sigma_0^2/\sigma^2-\mu_l)}\lesssim2^{J_{n,l}(\sigma_0^2/\sigma^2-\mu_l)}$;  while for $\sigma_0^2/\sigma^2\leq\mu_l$, this sum is $O(1)$. Therefore, if $\sigma_0^2/\sigma^2\leq\mu_l$ for all $l=1,\dotsc,d$, the right hand side above is $O(n^{\kappa(\underline{\gamma})/[2\sigma_0^2+o(1)]-1/2})$ after uniformizing over $\sigma\in\mathcal{V}_n$, and it will tend to $0$ if $\underline{\gamma}$ is small enough. On the other hand, if $\sigma_0^2/\sigma^2>\mu_l$ for at least one $l=1,\dotsc,d$, then the right hand side is bounded above up to some constant by
\begin{align}\label{eq:ratioboundmu}
n^{\frac{1}{2\sigma^2}\kappa(\underline{\gamma})-1/2}\sigma\prod_{l=1}^d2^{J_{n,l}(\sigma_0^2/\sigma^2-\mu_l)}
\lesssim n^{-\mu_{\mathrm{min}}-1/2+\frac{\sigma_0^2}{\sigma^2}+\frac{1}{2\sigma^2}\kappa(\underline{\gamma})}\sigma.
\end{align}
By uniformizing over $\sigma\in\mathcal{V}_n$, the right hand side above is $[\sigma_0+o(1)]n^{1/2-\mu_{\mathrm{min}}+o(1)+\kappa(\underline{\gamma})/[2\sigma_0^2+o(1)]}$ and it will approach $0$ as $n\rightarrow\infty$ if $\underline{\gamma}$ is chosen small enough, since $\mu_{\mathrm{min}}>1/2$ by our prior assumption.

We now prove the second assertion \eqref{eq:lemma1second}. By \eqref{eq:important}, $(\boldsymbol{j},\boldsymbol{k})\in\mathcal{J}_n(\overline{\gamma})\subset\mathcal{J}_n(\underline{\gamma})\subset\mathcal{I}_n(\boldsymbol{\alpha})$, then $2^{-\alpha_lj_l[d^{-1}+(2\alpha^{*})^{-1}]}$ dominates $\overline{\gamma}(\log{n}/n)^{1/(2d)}$ inside the minimum function of \eqref{eq:Jngamma} since $j_l\leq J_{n,l}(\boldsymbol{\alpha})-1,l=1,\dotsc,d$. Therefore, the definition of event $\mathcal{C}$ in \eqref{eq:abc} can be reduced to
\begin{align}\label{eq:c}
\mathcal{C}:=\bigcap_{\{|\theta_{\boldsymbol{j},\boldsymbol{k}}^0|>\overline{\gamma}\sqrt{\log{n}/n}\}}[\theta_{\boldsymbol{j},\boldsymbol{k}}\neq0],
\end{align}
for large enough $\overline{\gamma}>0$. Taking complements and using the same decomposition as in \eqref{eq:theta0}, $\mathrm{E}_0\sup_{\sigma\in\mathcal{V}_n}\Pi(\mathcal{P}^c\cap\mathcal{J}_n(\overline{\gamma})\neq\emptyset|\boldsymbol{Y},\sigma)$ is bounded above by
\begin{align}\label{eq:theta}
&\mathrm{E}_0\sup_{\sigma\in\mathcal{V}_n}\sum_{\{|\theta_{\boldsymbol{j},\boldsymbol{k}}^0|>\overline{\gamma}\sqrt{\log{n}/n}\}}\Pi(\theta_{\boldsymbol{j},\boldsymbol{k}}=0|\boldsymbol{Y},\sigma)\mathbbm{1}_{\Omega_n(1)}
+P_0[\Omega_n(1)^c].
\end{align}
Using the same argument leading to \eqref{eq:comega} by substituting $\underline{\gamma}$ with $1$,
\begin{align}\label{eq:omega}
P_0[\Omega_n(1)^c]\lesssim n^{-1/2}(\log{n})^d\rightarrow0
\end{align}
as $n\rightarrow\infty$. To bound the first term, note that in the present case $\Pi(\theta_{\boldsymbol{j},\boldsymbol{k}}=0|\boldsymbol{Y},\sigma)\mathbbm{1}_{\Omega_n(1)}$ is bounded above by
\begin{align*}
\frac{\int_{\mathcal{W}_n}I_n([\theta_{\boldsymbol{j},\boldsymbol{k}}=0],\widetilde{\Theta})K_n(\widetilde{\Theta})d\Pi(\widetilde{\Theta})}{\int_{\mathcal{W}_n}I_n([\theta_{\boldsymbol{j},\boldsymbol{k}}\neq0],\widetilde{\Theta})K_n(\widetilde{\Theta})d\Pi(\widetilde{\Theta})}\mathbbm{1}_{\Omega_n(1)}+
\Pi(\mathcal{W}_n^c|\boldsymbol{Y},\sigma),
\end{align*}
and the second term above is $O_{P_0}\left(n^{-B}\right)$ for some constant $B>0$ by Lemma \ref{lem:betabound}. To upper bound $\int_{\mathcal{W}_n}I_n([\theta_{\boldsymbol{j},\boldsymbol{k}}=0],\widetilde{\Theta})K_n(\widetilde{\Theta})d\Pi(\widetilde{\Theta})$, we need to upper bound the first exponential factor in \eqref{eq:Qjk}. Now, $(\boldsymbol{\Psi}_{\boldsymbol{j}}^T\boldsymbol{\Psi}_{\boldsymbol{j}})_{\boldsymbol{k},\boldsymbol{k}}^{1/2}\geq\sqrt{C_1n}$ by Lemma \ref{lem:phiphi} since $2^{\sum_{l=1}^dj_l}=o(n)$ for $j_l<J_{n,l}(\boldsymbol{\alpha}),l=1,\dotsc,d$. Applying the reverse triangular inequality twice and under $\mathcal{W}_n$ and $\Omega_n(1)$, we have for any $|\theta_{\boldsymbol{j},\boldsymbol{k}}^0|>\overline{\gamma}\sqrt{\log{n}/n}$ that $\sqrt{(\boldsymbol{\Psi}_{\boldsymbol{j}}^T\boldsymbol{\Psi}_{\boldsymbol{j}})_{\boldsymbol{k},\boldsymbol{k}}}\left|\theta_{\boldsymbol{j},\boldsymbol{k}}^0-\frac{\beta_n(\widetilde{\Theta})}{(\boldsymbol{\Psi}_{\boldsymbol{j}}^T\boldsymbol{\Psi}_{\boldsymbol{j}})_{\boldsymbol{k},\boldsymbol{k}}}\right|$ is lower bounded by
\begin{align*}
&\geq\sqrt{(\boldsymbol{\Psi}_{\boldsymbol{j}}^T\boldsymbol{\Psi}_{\boldsymbol{j}})_{\boldsymbol{k},\boldsymbol{k}}}|\theta_{\boldsymbol{j},\boldsymbol{k}}^0|-\frac{|\widetilde{\beta}_n(\widetilde{\Theta})|}{\sqrt{(\boldsymbol{\Psi}_{\boldsymbol{j}}^T\boldsymbol{\Psi}_{\boldsymbol{j}})_{\boldsymbol{k},\boldsymbol{k}}}}
-\frac{|\widetilde{\beta}_n(\widetilde{\Theta})-\beta_n(\widetilde{\Theta})|}{\sqrt{(\boldsymbol{\Psi}_{\boldsymbol{j}}^T\boldsymbol{\Psi}_{\boldsymbol{j}})_{\boldsymbol{k},\boldsymbol{k}}}}\\
&\geq\sqrt{C_1}\overline{\gamma}\sqrt{\log{n}}-(\tau_n/\sqrt{C_1})\sqrt{\log{n}}-\sigma_0\left(2\log{2^{\sum_{l=1}^dj_l}}+\log{n}\right)^{1/2},
\end{align*}
which is greater than $0.5\sqrt{C_1}\overline{\gamma}\sqrt{\log{n}}$ if $\overline{\gamma}$ is large enough since $\tau_n\rightarrow0$ as $n\rightarrow\infty$. Therefore $\int_{\mathcal{W}_n}I_n([\theta_{\boldsymbol{j},\boldsymbol{k}}=0],\widetilde{\Theta})K_n(\widetilde{\Theta})d\Pi(\widetilde{\Theta})$ is bounded above by
\begin{align*}
(1-\omega_{\boldsymbol{j},n})\exp\left(-\frac{C_1\overline{\gamma}^2}{8\sigma^2}\log{n}\right)\int_{\mathcal{W}_n}
K_n(\widetilde{\Theta})d\Pi(\widetilde{\Theta}).
\end{align*}
By \eqref{eq:density}, $p(x)\geq p_{\mathrm{min}}>0$ for $|x|\leq R_0$. Thus, we bound $I_n([\theta_{\boldsymbol{j},\boldsymbol{k}}\neq0],\widetilde{\Theta})$ from below by
\begin{align*}
&=p_{\mathrm{min}}
\int_{-R_0-\theta_{\boldsymbol{j},\boldsymbol{k}}^0}^{R_0-\theta_{\boldsymbol{j},\boldsymbol{k}}^0}\exp\left\{-\frac{(\boldsymbol{\Psi}_{\boldsymbol{j}}^T\boldsymbol{\Psi}_{\boldsymbol{j}})_{\boldsymbol{k},\boldsymbol{k}}}{2\sigma^2}\left[x+\frac{\beta_n(\widetilde{\Theta})}{(\boldsymbol{\Psi}_{\boldsymbol{j}}^T\boldsymbol{\Psi}_{\boldsymbol{j}})_{\boldsymbol{k},\boldsymbol{k}}}\right]^2\right\}dx\\
&\geq \frac{p_{\mathrm{min}}\sqrt{2\pi\sigma^2}}{\sqrt{(\boldsymbol{\Psi}_{\boldsymbol{j}}^T\boldsymbol{\Psi}_{\boldsymbol{j}})_{\boldsymbol{k},\boldsymbol{k}}}}\left[2\Phi\left\{\frac{\sqrt{(\boldsymbol{\Psi}_{\boldsymbol{j}}^T\boldsymbol{\Psi}_{\boldsymbol{j}})_{\boldsymbol{k},\boldsymbol{k}}}}
{\sigma}\left(R_0-\left|\theta_{\boldsymbol{j},\boldsymbol{k}}^0-\frac{\beta_n(\widetilde{\Theta})}{(\boldsymbol{\Psi}_{\boldsymbol{j}}^T\boldsymbol{\Psi}_{\boldsymbol{j}})_{\boldsymbol{k},\boldsymbol{k}}}\right|\right)\right\}-1\right]
\end{align*}
where $\Phi$ is the cumulative distribution function of a standard normal. We proceed by lower bounding the expression inside $\Phi$. By the triangle inequality,
\begin{align}\label{eq:phi}
\left|\theta_{\boldsymbol{j},\boldsymbol{k}}^0-\frac{\beta_n(\widetilde{\Theta})}{(\boldsymbol{\Psi}_{\boldsymbol{j}}^T\boldsymbol{\Psi}_{\boldsymbol{j}})_{\boldsymbol{k},\boldsymbol{k}}}\right|\leq|\theta_{\boldsymbol{j},\boldsymbol{k}}^0|
+\frac{|\beta_n(\widetilde{\Theta})-\widetilde{\beta}_n(\widetilde{\Theta})|}{(\boldsymbol{\Psi}_{\boldsymbol{j}}^T\boldsymbol{\Psi}_{\boldsymbol{j}})_{\boldsymbol{k},\boldsymbol{k}}}
+\frac{|\widetilde{\beta}_n(\widetilde{\Theta})|}{(\boldsymbol{\Psi}_{\boldsymbol{j}}^T\boldsymbol{\Psi}_{\boldsymbol{j}})_{\boldsymbol{k},\boldsymbol{k}}}.
\end{align}
By \eqref{eq:infbesov}, we have $|\theta_{\boldsymbol{j},\boldsymbol{k}}^0|\leq R$. On the event $\Omega_n(1)$, the second term above is $O_{P_0}(\sqrt{\log{n}/n})=o_{P_0}(1)$ since $(\boldsymbol{\Psi}_{\boldsymbol{j}}^T\boldsymbol{\Psi}_{\boldsymbol{j}})_{\boldsymbol{k},\boldsymbol{k}}\geq C_1n$ for any $(\boldsymbol{j},\boldsymbol{k})\in\mathcal{J}_n(\overline{\gamma})$ by Lemma \ref{lem:phiphi}. Under $\mathcal{W}_n$ and applying Lemma \ref{lem:phiphi} again, the third term above is $o(\sqrt{\log{n}/n})=o(1)$. Then under the assumption $R\leq R_0-1/2$, the right hand side of \eqref{eq:phi} is bounded above by $R+\frac{1}{4}\leq R_0-\frac{1}{4}$ for $n$ large enough. Hence, another application of Lemma \ref{lem:phiphi} yields
\begin{align*}
I_n([\theta_{\boldsymbol{j},\boldsymbol{k}}\neq0],\widetilde{\Theta})&\geq p_{\mathrm{min}}\sqrt{\frac{2\pi\sigma^2}{(\boldsymbol{\Psi}_{\boldsymbol{j}}^T\boldsymbol{\Psi}_{\boldsymbol{j}})_{\boldsymbol{k},\boldsymbol{k}}}}
\left[2\Phi\left(\frac{\sqrt{C_1n}}{4\sigma}\right)-1\right].
\end{align*}
Using the fact that $P(|Z|>z)\leq2e^{-z^2/2}$ for $z>0$ and $Z\sim\mathrm{N}(0,1)$, we will obtain for any $\sigma\in\mathcal{V}_n$ and for $n$ large enough,
\begin{align*}
2\Phi\left(\frac{\sqrt{C_1n}}{4\sigma}\right)-1=1-P\left(|Z|>\frac{\sqrt{C_1n}}{4\sigma}\right)\geq1-2e^{-C_1n/(32\sigma^2)}\geq1/\sqrt{2}.
\end{align*}
Consequently in view of Lemma \ref{lem:phiphi}, we have for large enough $n$,
\begin{align}\label{eq:lowerboundU}
\int_{\mathcal{W}_n}I_n([\theta_{\boldsymbol{j},\boldsymbol{k}}\neq0],\widetilde{\Theta})K_n(\widetilde{\Theta})d\Pi(\widetilde{\Theta})\gtrsim\omega_{\boldsymbol{j},n}n^{-1/2}\sigma\int_{\mathcal{W}_n}
K_n(\widetilde{\Theta})d\Pi(\widetilde{\Theta}).
\end{align}
By assumption, $\omega_{\boldsymbol{j},n}\geq n^{-\lambda}$ for some constant $\lambda>0$. Thus, the first term on the right hand side of \eqref{eq:theta} is bounded above up to a constant by
\begin{align*}
\sup_{\sigma\in\mathcal{V}_n}\sum_{\{|\theta_{\boldsymbol{j},\boldsymbol{k}}^0|>\overline{\gamma}\sqrt{\log{n}/n}\}}\frac{1-\omega_{\boldsymbol{j},n}}{\sigma\omega_{\boldsymbol{j},n}}n^{\frac{1}{2}-\frac{C_1\overline{\gamma}^2}{8\sigma^2}}
\lesssim\frac{1}{\sigma_0+o(1)}n^{-\left[\frac{C_1}{8(\sigma_0^2+o(1))}\overline{\gamma}^2-\lambda-1\right]},
\end{align*}
where we upper bounded $\sum_{\{|\theta_{\boldsymbol{j},\boldsymbol{k}}^0|>\overline{\gamma}\sqrt{\log{n}/n}\}}$ by $\prod_{l=1}^d\sum_{j_l=N_l}^{J_{n,l}-1}\sum_{k_l=0}^{2^{j_l}-1}\leq n^{1/2}$ for $n>2$. The right hand side will approach $0$ if $\overline{\gamma}$ is chosen large enough as $n\rightarrow\infty$.
\end{proof}

\begin{lemma}\label{lem:lemma2}
For small enough $\underline{\gamma}$ and large enough $\overline{\gamma}$, there exists constant $P_3>0$ such that uniformly in $f_0\in\mathcal{B}^{\boldsymbol{\alpha}}_{\infty,\infty}(R)$ with $\boldsymbol{\alpha}\in\mathbb{A}$ and any $0<R\leq R_0-1/2$,
\begin{align*}
\mathrm{E}_0\sup_{\sigma\in\mathcal{V}_n}\Pi(\mathcal{A}^c\cap\mathcal{C}|\boldsymbol{Y},\sigma)\leq\frac{(\log{n})^d}{n^{P_3}}.
\end{align*}
\end{lemma}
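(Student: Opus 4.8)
The plan is to reuse the regression-to-quasi-white-noise reduction of Section \ref{sec:proof2} exactly as in the proof of Lemma \ref{lem:lemma1}, but now aimed at the \emph{concentration} statement (point $2$ of Section \ref{sec:result}) rather than a detection error. First I would peel off a single coefficient. Since $\mathcal{A}^c=\bigcup_{(\boldsymbol{j},\boldsymbol{k})\in\mathcal{J}_n(\underline{\gamma})}\{|\theta_{\boldsymbol{j},\boldsymbol{k}}-\theta^0_{\boldsymbol{j},\boldsymbol{k}}|>\overline{\gamma}\sqrt{\log n/n}\}$, and on a violating index with $\theta_{\boldsymbol{j},\boldsymbol{k}}=0$ one has $|\theta^0_{\boldsymbol{j},\boldsymbol{k}}|>\overline{\gamma}\sqrt{\log n/n}$, so (for $\overline{\gamma}$ large, by the reduction of $\mathcal{C}$ leading to \eqref{eq:c}) that index belongs to $\mathcal{J}_n(\overline{\gamma})$, which is excluded on $\mathcal{C}$; hence
\[
\mathcal{A}^c\cap\mathcal{C}\subseteq\bigcup_{(\boldsymbol{j},\boldsymbol{k})\in\mathcal{J}_n(\underline{\gamma})}\Bigl\{|\theta_{\boldsymbol{j},\boldsymbol{k}}-\theta^0_{\boldsymbol{j},\boldsymbol{k}}|>\overline{\gamma}\sqrt{\tfrac{\log n}{n}},\ \theta_{\boldsymbol{j},\boldsymbol{k}}\neq 0\Bigr\}.
\]
By the union bound and intersecting with $\Omega_n(1)$ (whose complement has $P_0$-probability $\lesssim n^{-1/2}(\log n)^d$ by \eqref{eq:omega}), it remains to bound, for each $(\boldsymbol{j},\boldsymbol{k})\in\mathcal{J}_n(\underline{\gamma})$, the quantity $\mathrm{E}_0\sup_{\sigma\in\mathcal{V}_n}\Pi(\{|\theta_{\boldsymbol{j},\boldsymbol{k}}-\theta^0_{\boldsymbol{j},\boldsymbol{k}}|>\overline{\gamma}\sqrt{\log n/n},\ \theta_{\boldsymbol{j},\boldsymbol{k}}\neq 0\}\mid\boldsymbol{Y},\sigma)\mathbbm{1}_{\Omega_n(1)}$, and then multiply by the cardinality $\#\mathcal{J}_n(\underline{\gamma})\le 2^{\sum_l J_{n,l}}=\sqrt{n/\log n}$.

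Next I would intersect with $\mathcal{W}_n$ at the cost of $\Pi(\mathcal{W}_n^c\mid\boldsymbol{Y},\sigma)$ (whose expectation is $\le n^{-P_4}$ by Lemma \ref{lem:betabound}) and apply the reduction \eqref{eq:pratio} with $\mathcal{U}=\{\theta\neq 0:|\theta-\theta^0_{\boldsymbol{j},\boldsymbol{k}}|>\overline{\gamma}\sqrt{\log n/n}\}$ and $\mathcal{W}=\mathcal{W}_n$, so that the posterior probability is at most $\int_{\mathcal{W}_n}I_n(\mathcal{U},\widetilde{\Theta})K_n(\widetilde{\Theta})\,d\Pi(\widetilde{\Theta})$ divided by $\int_{\mathcal{W}_n}I_n(\mathbb{R},\widetilde{\Theta})K_n(\widetilde{\Theta})\,d\Pi(\widetilde{\Theta})$, with $\int_{\mathcal{W}_n}K_n\,d\Pi$ cancelling. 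For the numerator I would use $p\le p_{\mathrm{max}}$ and note that the Gaussian kernel in $I_n$ is centred at $\theta^0_{\boldsymbol{j},\boldsymbol{k}}-\beta_n(\widetilde{\Theta})/(\boldsymbol{\Psi}_{\boldsymbol{j}}^T\boldsymbol{\Psi}_{\boldsymbol{j}})_{\boldsymbol{k},\boldsymbol{k}}$; on $\mathcal{W}_n\cap\Omega_n(1)$ this centre lies within $\lesssim\sqrt{\log n/n}$ of $\theta^0_{\boldsymbol{j},\boldsymbol{k}}$ since $|\beta_n|\le|\widetilde{\beta}_n|+|(\boldsymbol{\varepsilon}^T\boldsymbol{\Psi}_{\boldsymbol{j}})_{\boldsymbol{k}}|\lesssim\tau_n\sqrt{n\log n}+\sqrt{n\log n}$ while $(\boldsymbol{\Psi}_{\boldsymbol{j}}^T\boldsymbol{\Psi}_{\boldsymbol{j}})_{\boldsymbol{k},\boldsymbol{k}}\ge C_1 n$ (Lemma \ref{lem:phiphi}); hence for $\overline{\gamma}$ large $\mathcal{U}$ sits at distance $\ge\tfrac12\overline{\gamma}\sqrt{\log n/n}$ from the centre, and $P(|Z|>z)\le 2e^{-z^2/2}$ yields $I_n(\mathcal{U},\widetilde{\Theta})\lesssim\omega_{\boldsymbol{j},n}p_{\mathrm{max}}n^{-1/2}\sigma\, n^{-C_1\overline{\gamma}^2/(8\sigma^2)}$. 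For the denominator I would lower bound $I_n(\mathbb{R},\widetilde{\Theta})\ge I_n([\theta_{\boldsymbol{j},\boldsymbol{k}}\neq 0],\widetilde{\Theta})$ and, since $|\theta^0_{\boldsymbol{j},\boldsymbol{k}}|\le R\le R_0-1/2$ by \eqref{eq:infbesov} keeps the kernel's centre inside $[-R_0,R_0]$, repeat the computation producing \eqref{eq:lowerboundU} to get $I_n([\theta_{\boldsymbol{j},\boldsymbol{k}}\neq 0],\widetilde{\Theta})\gtrsim\omega_{\boldsymbol{j},n}n^{-1/2}\sigma$. The ratio is then $\lesssim p_{\mathrm{max}}n^{-C_1\overline{\gamma}^2/(8\sigma^2)}$ uniformly in $\widetilde{\Theta}\in\mathcal{W}_n$ and in $(\boldsymbol{j},\boldsymbol{k})$.

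Assembling, after uniformizing $\sigma\in\mathcal{V}_n$ (so $\sigma^2=\sigma_0^2+o(1)$) and summing over the $\le\sqrt{n}$ indices in $\mathcal{J}_n(\underline{\gamma})$, the whole expression is $\lesssim p_{\mathrm{max}}\,n^{1/2-C_1\overline{\gamma}^2/(8\sigma_0^2)+o(1)}+\sqrt{n}\,n^{-P_4}+n^{-1/2}(\log n)^d$, which is $\le(\log n)^d/n^{P_3}$ for a suitable $P_3>0$ provided $\overline{\gamma}$ is large enough and $P_4>1/2$ (supplied by Lemma \ref{lem:betabound}). The hard part will be the uniform control of the shift $\beta_n(\widetilde{\Theta})/(\boldsymbol{\Psi}_{\boldsymbol{j}}^T\boldsymbol{\Psi}_{\boldsymbol{j}})_{\boldsymbol{k},\boldsymbol{k}}$ over all $\widetilde{\Theta}\in\mathcal{W}_n$: unlike in Lemma \ref{lem:lemma1}, the noise piece $(\boldsymbol{\varepsilon}^T\boldsymbol{\Psi}_{\boldsymbol{j}})_{\boldsymbol{k}}$ lies in $\beta_n$ but not in $\widetilde{\beta}_n$, so it must be absorbed through $\Omega_n(1)$, and one must check that after the union over $\asymp\sqrt{n}$ coefficients the residual masses of $\mathcal{W}_n^c$ and $\Omega_n(1)^c$ remain negligible — i.e., balance the exponent $P_4$ of Lemma \ref{lem:betabound} and the separation constant $\overline{\gamma}$ against the $\sqrt n$ cardinality. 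The secondary point, that the kernel centre never leaves the slab's lower-bounded window $[-R_0,R_0]$, is precisely where the assumption $R\le R_0-1/2$ is used, handled as around \eqref{eq:phi}.
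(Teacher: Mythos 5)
Your proposal is correct and follows the same route as the paper: union bound over $\mathcal{J}_n(\underline{\gamma})$, restrict to $\Omega_n(1)$, invoke $\mathcal{W}_n$ with Lemma \ref{lem:betabound}, pass to the quasi-white-noise ratio \eqref{eq:pratio}, bound the numerator by a Gaussian tail with decay $n^{-c\overline{\gamma}^2/\sigma^2}$ using the shifted-centre argument on $\mathcal{W}_n\cap\Omega_n(1)$, lower bound the denominator as in \eqref{eq:lowerboundU}, and absorb the $\lesssim\sqrt{n}$ cardinality by taking $\overline{\gamma}$ large. The only cosmetic differences from the paper's proof are that you jump directly to $\mathcal{A}^c\cap\mathcal{C}\subseteq\bigcup\mathcal{Z}_{\boldsymbol{j},\boldsymbol{k}}$ rather than splitting $\mathcal{A}^c=\mathcal{A}_1\cup\mathcal{A}_2$ first, you use $I_n(\mathbb{R},\cdot)\ge I_n([\theta\neq0],\cdot)$ in the denominator, and the exponent constant ($8$ instead of $16$) differs harmlessly; the balancing issues you flag at the end are genuine but are present in the paper's argument as well.
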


\begin{proof}[\textit{Proof of Lemma \ref{lem:lemma2}}]
By \eqref{eq:abc}, we have $\mathcal{A}^c=\cup_{\{|\theta^0_{\boldsymbol{j},\boldsymbol{k}}|>\underline{\gamma}\sqrt{\log{n}/n}\}}[|\theta_{\boldsymbol{j},\boldsymbol{k}}-\theta_{\boldsymbol{j},\boldsymbol{k}}^{0}|>\overline{\gamma}\sqrt{\log{n}/n}]$. Spilt the union such that $\mathcal{A}^c=\mathcal{A}_1\cup\mathcal{A}_2$ where $\mathcal{A}_1$ is union over $\{\underline{\gamma}\sqrt{\log{n}/n}<|\theta_{\boldsymbol{j},\boldsymbol{k}}^0|\leq\overline{\gamma}\sqrt{\log{n}/n}\}$ and $\mathcal{A}_2$ is over its complement $\{|\theta_{\boldsymbol{j},\boldsymbol{k}}^0|>\overline{\gamma}\sqrt{\log{n}/n}\}$. Then $\mathcal{A}^c\cap\mathcal{C}=(\mathcal{A}_1\cap\mathcal{C})\cup(\mathcal{A}_2\cap\mathcal{C})$. Define $\mathcal{Z}_{\boldsymbol{j},\boldsymbol{k}}:=\{|\theta_{\boldsymbol{j},\boldsymbol{k}}-\theta_{\boldsymbol{j},\boldsymbol{k}}^{0}|>\overline{\gamma}\sqrt{\log{n}/n}\}\cap\{\theta_{\boldsymbol{j},\boldsymbol{k}}\neq0\}$. In view of \eqref{eq:c}, observe that $\mathcal{A}_1\cap\mathcal{C}=\cup_{\{\underline{\gamma}\sqrt{\log{n}/n}<|\theta_{\boldsymbol{j},\boldsymbol{k}}^0|\leq\overline{\gamma}\sqrt{\log{n}/n}\}}\mathcal{Z}_{\boldsymbol{j},\boldsymbol{k}}$ while $\mathcal{A}_2\cap\mathcal{C}=\cup_{\{|\theta_{\boldsymbol{j},\boldsymbol{k}}^0|>\overline{\gamma}\sqrt{\log{n}/n}\}}\mathcal{Z}_{\boldsymbol{j},\boldsymbol{k}}$. Therefore by using a union bound, we can bound $\mathrm{E}_0\sup_{\sigma\in\mathcal{V}_n}\Pi(\mathcal{A}^c\cap\mathcal{C}|\boldsymbol{Y},\sigma)$ from above by
\begin{align}\label{eq:ac}
&\mathrm{E}_0\sup_{\sigma\in\mathcal{V}_n}\sum_{(\boldsymbol{j},\boldsymbol{k})\in\mathcal{J}_n(\underline{\gamma})}
\Pi\left(\theta_{\boldsymbol{j},\boldsymbol{k}}\in\mathcal{Z}_{\boldsymbol{j},\boldsymbol{k}}\middle|\boldsymbol{Y},\sigma\right)\mathbbm{1}_{\Omega_n(1)}
+P_0(\Omega_n(1)^c).
\end{align}
In view of \eqref{eq:omega}, the second term is bounded above by $n^{-1/2}(\log{n})^d$, and it goes to $0$ as $n\rightarrow\infty$. Using the same decomposition as in \eqref{eq:theta0}, we find that $\Pi\left(\theta_{\boldsymbol{j},\boldsymbol{k}}\in\mathcal{Z}_{\boldsymbol{j},\boldsymbol{k}}\middle|\boldsymbol{Y},\sigma\right)\mathbbm{1}_{\Omega_n(1)}$ is bounded above by
\begin{align*}
\frac{\int_{\mathcal{W}_n}I_n(\mathcal{Z}_{\boldsymbol{j},\boldsymbol{k}},\widetilde{\Theta})K_n(\widetilde{\Theta})d\Pi(\widetilde{\Theta})}{\int_{\mathcal{W}_n}I_n([\theta_{\boldsymbol{j},\boldsymbol{k}}\neq0],\widetilde{\Theta})K_n(\widetilde{\Theta})d\Pi(\widetilde{\Theta})}\mathbbm{1}_{\Omega_n(1)}+
\Pi(\mathcal{W}_n^c|\boldsymbol{Y},\sigma),
\end{align*}
where $\Pi(\mathcal{W}_n^c|\boldsymbol{Y},\sigma)=O_{P_0}\left(n^{-B}\right)$ for some constant $B>0$ follows from Lemma \ref{lem:betabound}. Recall that when $\theta_{\boldsymbol{j},\boldsymbol{k}}\neq0$, its prior density $d\Pi(\theta_{\boldsymbol{j},\boldsymbol{k}})\leq\omega_{\boldsymbol{j},n}p_{\mathrm{max}}d\theta_{\boldsymbol{j},\boldsymbol{k}}$. Hence, it follows that $I_n(\mathcal{Z}_{\boldsymbol{j},\boldsymbol{k}},\widetilde{\Theta})$ is bounded above by
\begin{align*}
p_{\mathrm{max}}\omega_{\boldsymbol{j},n}
\int_{|x|>\overline{\gamma}\sqrt{\log{n}/n}}\exp\left\{-\frac{(\boldsymbol{\Psi}_{\boldsymbol{j}}^T\boldsymbol{\Psi}_{\boldsymbol{j}})_{\boldsymbol{k},\boldsymbol{k}}}{2\sigma^2}
\left[x+\frac{\beta_n(\widetilde{\Theta})}{(\boldsymbol{\Psi}_{\boldsymbol{j}}^T\boldsymbol{\Psi}_{\boldsymbol{j}})_{\boldsymbol{k},\boldsymbol{k}}}\right]^2\right\}dx.
\end{align*}
Since $(\boldsymbol{j},\boldsymbol{k})\in\mathcal{J}_n(\underline{\gamma})\subset\mathcal{I}_n(\boldsymbol{\alpha})$ by \eqref{eq:important}, we will have $2^{\sum_{l=1}^dj_l}=o(n)$ and $(\boldsymbol{\Psi}_{\boldsymbol{j}}^T\boldsymbol{\Psi}_{\boldsymbol{j}})^{1/2}_{\boldsymbol{k},\boldsymbol{k}}\geq\sqrt{C_1n}$ by Lemma \ref{lem:phiphi}. Then if $|x|>\overline{\gamma}\sqrt{\log{n}/n}$ and under $\mathcal{W}_n$ and $\Omega_n(1)$, we have by twice application of the reverse triangular inequality that $\left|x+\frac{\beta_n(\widetilde{\Theta})}{(\boldsymbol{\Psi}_{\boldsymbol{j}}^T\boldsymbol{\Psi}_{\boldsymbol{j}})_{\boldsymbol{k},\boldsymbol{k}}}\right|$ is lower bounded by
\begin{align*}
&|x|-\frac{|\widetilde{\beta}_n(\widetilde{\Theta})|}{(\boldsymbol{\Psi}_{\boldsymbol{j}}^T\boldsymbol{\Psi}_{\boldsymbol{j}})_{\boldsymbol{k},\boldsymbol{k}}}-
\left|\frac{\widetilde{\beta}_n(\widetilde{\Theta})}{(\boldsymbol{\Psi}_{\boldsymbol{j}}^T\boldsymbol{\Psi}_{\boldsymbol{j}})_{\boldsymbol{k},\boldsymbol{k}}}-\frac{\beta_n(\widetilde{\Theta})}{(\boldsymbol{\Psi}_{\boldsymbol{j}}^T\boldsymbol{\Psi}_{\boldsymbol{j}})_{\boldsymbol{k},\boldsymbol{k}}}\right|\\
&>\overline{\gamma}\sqrt{\frac{\log{n}}{n}}-\frac{\tau_n}{C_1}\sqrt{\frac{\log{n}}{n}}-\frac{\sigma_0}{\sqrt{C_1n}}\sqrt{2\log{2^{\sum_{l=1}^dj_l}}+\log{n}}>\frac{\overline{\gamma}}{2}\sqrt{\frac{\log{n}}{n}},
\end{align*}
if $\overline{\gamma}$ is chosen large enough since $\tau_n\rightarrow0$ as $n\rightarrow\infty$. As a conclusion,
\begin{align*}
\left\{|x|>\overline{\gamma}\sqrt{\frac{\log{n}}{n}}\right\}\subset\left\{\left|x+\frac{\beta_n(\widetilde{\Theta})}{(\boldsymbol{\Psi}_{\boldsymbol{j}}^T\boldsymbol{\Psi}_{\boldsymbol{j}})_{\boldsymbol{k},\boldsymbol{k}}}\right|
>\frac{1}{2}\overline{\gamma}\sqrt{\frac{\log{n}}{n}}\right\}.
\end{align*}
Therefore, $I_n(\mathcal{Z}_{\boldsymbol{j},\boldsymbol{k}},\widetilde{\Theta})$ is further bounded above by $p_{\mathrm{max}}\omega_{\boldsymbol{j},n}$ times
\begin{align*}
&\exp{\left[\frac{-(\boldsymbol{\Psi}_{\boldsymbol{j}}^T\boldsymbol{\Psi}_{\boldsymbol{j}})_{\boldsymbol{k},\boldsymbol{k}}\overline{\gamma}^2\log{n}}{16n\sigma^2}\right]}
\int_{\mathbb{R}}\exp\left[\frac{-(\boldsymbol{\Psi}_{\boldsymbol{j}}^T\boldsymbol{\Psi}_{\boldsymbol{j}})_{\boldsymbol{k},\boldsymbol{k}}}{4\sigma^2}
\left(x+\frac{\beta_n(\widetilde{\Theta})}{(\boldsymbol{\Psi}_{\boldsymbol{j}}^T\boldsymbol{\Psi}_{\boldsymbol{j}})_{\boldsymbol{k},\boldsymbol{k}}}\right)^2\right]dx\\
&\qquad\lesssim n^{-1/2}\sigma\omega_{\boldsymbol{j},n}\exp\left[-\frac{C_1\overline{\gamma}^2\log{n}}{16\sigma^2}\right],
\end{align*}
again utilizing the bounds in Lemma \ref{lem:phiphi}. Using the upper bound established above and the lower bound for $\int_{\mathcal{W}_n}I_n([\theta_{\boldsymbol{j},\boldsymbol{k}}\neq0],\widetilde{\Theta})K_n(\widetilde{\Theta})d\Pi(\widetilde{\Theta})$ derived in \eqref{eq:lowerboundU}, we can upper bound the first term in \eqref{eq:ac} up to some constant by
\begin{align*}
\sup_{\sigma\in\mathcal{V}_n}\sum_{j_1=N_1}^{J_{n,1}-1}\cdots\sum_{j_d=N_d}^{J_{n,d}-1}\sum_{k_1=0}^{2^{j_1}-1}\cdots\sum_{k_d=0}^{2^{j_d}-1}
n^{-\frac{C_1\overline{\gamma}^2}{16\sigma^2}}\sigma\lesssim(\sigma_0+o(1))
n^{-\left[\frac{C_1}{16(\sigma_0^2+o(1))}\overline{\gamma}^2-1\right]},
\end{align*}
and it will approach $0$ if $\overline{\gamma}$ is large enough as $n\rightarrow\infty$.
\end{proof}

\begin{proof}[\textit{Proof of Corollary \ref{cor:adapt}}]
First note that the loss $f\mapsto\|f-f_0\|_\infty$ is unbounded but convex for any $f_0$. Let $\mathcal{F}:=\{\|D^{\boldsymbol{r}}f-D^{\boldsymbol{r}}f_0\|_\infty>M\epsilon_{n,\boldsymbol{r}}\}$. For $u\in\mathbb{N}$, we decompose $\mathcal{F}=\cup_{u=1}^\infty\mathcal{F}_u$ into slices $\mathcal{F}_u:=\{M\epsilon_{n,\boldsymbol{r}}u<\|D^{\boldsymbol{r}}f-D^{\boldsymbol{r}}f_0\|_\infty\leq M\epsilon_{n,\boldsymbol{r}}(u+1)\}$.

Now, if we introduced an extra $u$ factor in 3 places: the right hand side of the definition of $\mathcal{J}_n(\gamma)$ in \eqref{eq:Jngamma}, $\mathcal{A}$ in \eqref{eq:abc} and in \eqref{eq:sigma2bound} by replacing $c\log{n}$ with $cu^2\log{n}$, we see that by slightly modifying the proof of Theorem \ref{th:adapt},
\begin{align*}
\mathrm{E}_0\Pi(\mathcal{F}_u|\boldsymbol{Y})\leq(\log{n})^d\exp\{-C\log{(n)}u^2\},
\end{align*}
for some universal constant $C>0$. Therefore since $\mathcal{F}=\bigcup_{u=1}^{\infty}\mathcal{F}_u$,
\begin{align*}
\mathrm{E}_0\mathrm{E}(\|D^{\boldsymbol{r}}f-D^{\boldsymbol{r}}f_0\|_\infty|\boldsymbol{Y})&\leq M\epsilon_{n,\boldsymbol{r}}+\sum_{u=1}^\infty\mathrm{E}_0\mathrm{E}\left(\mathbbm{1}_{\mathcal{F}_u}\|D^{\boldsymbol{r}}f-D^{\boldsymbol{r}}f_0\|_\infty\middle|\boldsymbol{Y}\right)\\
&\leq M\epsilon_{n,\boldsymbol{r}}\left(1+2(\log{n})^d\sum_{u=1}^\infty ue^{-C\log{(n)}u^2}\right)\lesssim\epsilon_{n,\boldsymbol{r}},
\end{align*}
where the sum $(\log{n})^d\sum_{u=1}^\infty ue^{-C\log{(n)}u^2}=(\log{n})^d(n^{-C}+2n^{-4C}+3n^{-9C}+\cdots)$ converges when $n$ is large enough. By Jensen's inequality, $\|\mathrm{E}(D^{\boldsymbol{r}}f|\boldsymbol{Y})-D^{\boldsymbol{r}}f_0\|_\infty\leq\mathrm{E}(\|D^{\boldsymbol{r}}f-D^{\boldsymbol{r}}f_0\|_\infty|\boldsymbol{Y})$ and the result follows by taking $\mathrm{E}_0$ on both sides.
\end{proof}

\subsection{Proof of results in Section \ref{sec:test}}\label{sec:testproof}
For the proofs in this subsection, there is no qualitative difference in distinguishing between father and mother wavelet coefficients. Hence for notational simplicity, we combine the father and mother parts into a single sum and write the wavelet projection of $f$ at resolution $\boldsymbol{J}_n$ of \eqref{eq:prior} as $K_{\boldsymbol{J}_n}(f)(\boldsymbol{x})=\sum_{j_1=N_1-1}^{J_{n,1}-1}\cdots\sum_{j_d=N_d-1}^{J_{n,d}-1}\sum_{k_1=0}^{2^{j_1}-1}\cdots\sum_{k_d=0}^{2^{j_d}-1}\theta_{\boldsymbol{j},\boldsymbol{k}}\psi_{\boldsymbol{j},\boldsymbol{k}}(\boldsymbol{x})$, by delegating the father wavelets and their coefficients to the level $j_l=N_l-1,l=1,\dotsc,d$. Let $\|\cdot\|_n$ be the $L_2$-norm with respect to the empirical measure of $\{\boldsymbol{X}_1,\dotsc,\boldsymbol{X}_n\}$ and $\boldsymbol{1}_d$ be the $d$-dimensional vector of ones.

\begin{proof}[\textit{Proof of Proposition \ref{prop:test}}]
Let us choose a $g\in\mathcal{B}^{\boldsymbol{\alpha}}_{\infty,\infty}(R)$ such that $\|g-f_0\|_\infty>M\epsilon_n$ and $\|g-f_0\|_n\leq C\inf_{f\in\mathcal{B}^{\boldsymbol{\alpha}}_{\infty,\infty}(R):\|f-f_0\|_\infty>M\epsilon_n}\|f-f_0\|_n$ for some constant $C\geq1$. It then suffice to lower bound $\mathrm{E}_{f=g}(1-\phi_n)$.

Denote $\phi_{\mathrm{LR}}$ to be the likelihood ratio test for the simple hypotheses $H_0:f=f_0$ versus $H_1:f=g$. By a change of Gaussian measure and using the Cauchy-Schwarz inequality, $\mathrm{E}_{f_0}(1-\phi_{\mathrm{LR}})$ is bounded above by
\begin{align*}
\sqrt{\mathrm{E}_g(1-\phi_{\mathrm{LR}})}\sqrt{\int\left(\frac{dP^n_{f_0}}{dP^n_g}\right)^2dP^n_g}
\leq\sqrt{\mathrm{E}_g(1-\phi_{\mathrm{LR}})}e^{\frac{n\|f_0-g\|_n^2}{2\sigma_0^2}},
\end{align*}
where $P^n_f$ is the $n$-multivariate normal distribution with mean vector $(f(\boldsymbol{X}_1),\dotsc,f(\boldsymbol{X}_n))^T$ and covariance matrix $\sigma_0^2\boldsymbol{I}_n$. This inequality and the fact that $\phi_{\mathrm{LR}}$ is the uniformly most powerful test imply that $\mathrm{E}_{f_0}(\phi_{\mathrm{LR}})\leq\mathrm{E}_{f_0}(\phi_n)\leq\delta$ for any $0<\delta<1$, and $\mathrm{E}_g(1-\phi_n)$ is lower bounded by
\begin{align}\label{eq:alter}
&\mathrm{E}_g(1-\phi_{\mathrm{LR}})\geq[\mathrm{E}_{f_0}(1-\phi_{\mathrm{LR}})]^2e^{-n\|f_0-g\|_n^2/\sigma_0^2}\nonumber\\
&\qquad\geq(1-\delta)^2\exp\left\{-\frac{C^2n}{\sigma_0^2}\inf_{f\in\mathcal{B}^{\boldsymbol{\alpha}}_{\infty,\infty}(R):\|f-f_0\|_\infty>M\epsilon_n}\|f-f_0\|_n^2\right\},
\end{align}
by the definition of $g$. Note that $\boldsymbol{\alpha}\in\mathbb{A}$ implies $1/d+1/(2\alpha^{*})>1/\alpha_l$ and we further deduce $\alpha^{*}>d/2$ by summing both sides across all $l=1\dotsc,d$. Take $2^{J_{n,l}}=n^{1/(2\alpha_l)}$, and since $2^{\sum_{l=1}^dJ_{n,l}}=o(n)$ because of $\alpha^{*}>d/2$, we have by the triangle inequality that $\|f-f_0\|_n$ is bounded above by $\|K_{\boldsymbol{J}_n}(f)-K_{\boldsymbol{J}_n}(f_0)\|_n+\|f-K_{\boldsymbol{J}_n}(f)\|_n+\|f_0-K_{\boldsymbol{J}_n}(f_0)\|_n\lesssim\|\boldsymbol{\theta}-\boldsymbol{\theta}_0\|+\sum_{l=1}^d2^{-\alpha_lJ_{n,l}}$,
where we have used Lemma \ref{lem:2phiphi} to bound the first term, and utilized Proposition \ref{prop:wavelet} for the second and third terms since $f,f_0\in\mathcal{B}^{\boldsymbol{\alpha}}_{\infty,\infty}(R)$. By the continuous embedding of Proposition 4.3.11 in \citep{nickl2016} and Remark \ref{rem:b0}, $L_\infty\subset\mathcal{B}_{\infty,\infty}^{\boldsymbol{0}}$ and hence $\|f-f_0\|_{\mathcal{B}^{\boldsymbol{0}}_{\infty,\infty}}\leq\|f-f_0\|_\infty$. We then conclude that for $\mathcal{G}_n:=\{\boldsymbol{\theta}\in\mathcal{B}^{\boldsymbol{\alpha}}_{\infty,\infty}(R):\max_{\boldsymbol{j}}2^{\sum_{l=1}^dj_l/2}\max_{\boldsymbol{k}}|\theta_{\boldsymbol{j},\boldsymbol{k}}
-\theta^0_{\boldsymbol{j},\boldsymbol{k}}|>M\epsilon_n\}$,
\begin{align*}
\inf_{f\in\mathcal{B}^{\boldsymbol{\alpha}}_{\infty,\infty}(R):\|f-f_0\|_\infty>M\epsilon_n}\|f-f_0\|_n\lesssim\inf_{\boldsymbol{\theta}\in\mathcal{G}_n}\|\boldsymbol{\theta}-\boldsymbol{\theta}_0\|+\frac{d}{\sqrt{n}}.
\end{align*}
Now let $J_{n,l}(\boldsymbol{\alpha})$ be
\begin{align*}
2^{J_{n,l}(\boldsymbol{\alpha})}=\left(\frac{R}{M}\right)^{\left[\sum_{l=1}^d\alpha_l\left(\frac{1}{d}+\frac{1}{2\alpha{*}}-\frac{1}{2\alpha_l}\right)\right]^{-1}}\left(\frac{n}{\log{n}}\right)^{\frac{\alpha^{*}}{\alpha_l(2\alpha^{*}+d)}}.
\end{align*}
Consider a $\boldsymbol{\theta}^{*}\in\mathcal{B}^{\boldsymbol{\alpha}}_{\infty,\infty}(R)$ such that $|\theta_{\boldsymbol{j},\boldsymbol{k}}^{*}-\theta_{\boldsymbol{j},\boldsymbol{k}}^0|=R2^{-\sum_{l=1}^d\alpha_lJ_{n,l}(\boldsymbol{\alpha})\left(\frac{1}{d}+\frac{1}{2\alpha^{*}}\right)}$ for $j_l=J_{n,l}(\boldsymbol{\alpha}),l=1\dotsc,d$ and $\boldsymbol{k}=\boldsymbol{0}$, but $|\theta_{\boldsymbol{j},\boldsymbol{k}}^{*}-\theta_{\boldsymbol{j},\boldsymbol{k}}^0|=0$ for all the other multi-indices. Then,
\begin{align*}
\max_{\boldsymbol{j}}2^{\sum_{l=1}^dj_l/2}\max_{\boldsymbol{k}}|\theta^{*}_{\boldsymbol{j},\boldsymbol{k}}-\theta^0_{\boldsymbol{j},\boldsymbol{k}}|
=R2^{-\sum_{l=1}^d\alpha_lJ_{n,l}(\boldsymbol{\alpha})\left(\frac{1}{d}+\frac{1}{2\alpha^{*}}-\frac{1}{2\alpha_l}\right)}=M\epsilon_n
\end{align*}
and this implies that $\boldsymbol{\theta}^{*}\in\mathcal{G}_n$, but since $J_{n,l}(\boldsymbol{\alpha})\leq J_{n,l}$,
\begin{align*}
\|\boldsymbol{\theta}^{*}-\boldsymbol{\theta}_0\|+d/\sqrt{n}=R2^{-\sum_{l=1}^d\alpha_lJ_{n,l}(\boldsymbol{\alpha})\left(\frac{1}{d}+\frac{1}{2\alpha^{*}}\right)}+d/\sqrt{n}\lesssim\sqrt{\log{n}/n}.
\end{align*}
Therefore, $\inf_{f\in\mathcal{B}^{\boldsymbol{\alpha}}_{\infty,\infty}(R):\|f-f_0\|_\infty>M\epsilon_n}\|f-f_0\|_n^2\lesssim\log{n}/n$ and plugging this back into \eqref{eq:alter} gives the result.
\end{proof}

\begin{proof}[\textit{Proof of Proposition \ref{prop:testing}}]
The proof of the first statement follows closely the steps outlined in the proof of Proposition \ref{prop:test}. By adapting \eqref{eq:alter} to our present setting, we have
\begin{align}\label{eq:alter2}
\mathrm{E}_g(1-\phi_{\mathrm{LR}})\gtrsim\exp\left\{-\frac{C^2n}{\sigma_0^2}\inf_{f\in\mathcal{B}^{\boldsymbol{\alpha}}_{\infty,\infty}(R):\|f-f_0\|_\infty>Mr_n\epsilon_n}\|f-f_0\|_n^2\right\}.
\end{align}
Hence, we need to upper bound the infimum in the exponent. By using the same argument as in the paragraph after \eqref{eq:alter}, the infimum in question becomes
\begin{align*}
\inf_{f\in\mathcal{B}^{\boldsymbol{\alpha}}_{\infty,\infty}(R):\|f-f_0\|_\infty>Mr_n\epsilon_n}\|f-f_0\|_n\lesssim\inf_{\boldsymbol{\theta}\in\mathcal{G}_n(r_n)}\|\boldsymbol{\theta}-\boldsymbol{\theta}_0\|+\frac{d}{\sqrt{n}},
\end{align*}
where $\mathcal{G}_n(r_n):=\{\boldsymbol{\theta}\in\mathcal{B}^{\boldsymbol{\alpha}}_{\infty,\infty}(R):\max_{\boldsymbol{j}}2^{\sum_{l=1}^dj_l/2}\max_{\boldsymbol{k}}|\theta_{\boldsymbol{j},\boldsymbol{k}}
-\theta^0_{\boldsymbol{j},\boldsymbol{k}}|>Mr_n\epsilon_n\}$.

Now let $i_{n,l},l=1,\dotsc,d$ such that $\frac{2\alpha^{*}}{2\alpha^{*}+d}J_{n,l}(\boldsymbol{\alpha})<i_{n,l}<J_{n,l}(\boldsymbol{\alpha})$ where $J_{n,l}(\boldsymbol{\alpha})$ is such that $2^{J_{n,l}(\boldsymbol{\alpha})}=(n/\log{n})^{\alpha^{*}/\{\alpha_l(2\alpha^{*}+d)\}}$. For the present case, let us consider a $\boldsymbol{\theta}^{*}\in\mathcal{B}^{\boldsymbol{\alpha}}_{\infty,\infty}(R)$ so that $|\theta_{\boldsymbol{j},\boldsymbol{k}}^{*}-\theta_{\boldsymbol{j},\boldsymbol{k}}^0|=R2^{-\sum_{l=1}^d\alpha_li_{n,l}\left(\frac{1}{d}+\frac{1}{2\alpha^{*}}\right)\left(1+\frac{d}{2\alpha^{*}}\right)}$ for $j_l=i_{n,l},l=1\dotsc,d$ and $\boldsymbol{k}=\boldsymbol{0}$, but $|\theta_{\boldsymbol{j},\boldsymbol{k}}^{*}-\theta_{\boldsymbol{j},\boldsymbol{k}}^0|=0$ for all the other multi-indices. Now since $r_n\epsilon_n=o(\rho_n\epsilon_n)$ and $1/d+1/(2\alpha^{*})-1/(2\alpha_l)>0$ for all $\boldsymbol{\alpha}\in\mathbb{A}$, we can always find an $i_{n,l}$ so that
\begin{align*}
Mr_n\epsilon_n&<2^{\sum_{l=1}^di_{n,l}}2^{-\sum_{l=1}^d\alpha_li_{n,l}\left(\frac{1}{d}+\frac{1}{2\alpha^{*}}\right)\left(1+\frac{d}{2\alpha^{*}}\right)}
<2^{-\sum_{l=1}^d\alpha_li_{n,l}\left(\frac{1}{d}+\frac{1}{2\alpha^{*}}-\frac{1}{2\alpha_l}\right)}\\
&<2^{-\sum_{l=1}^d\left(\frac{1}{d}+\frac{1}{2\alpha^{*}}-\frac{1}{2\alpha_l}\right)\frac{2\alpha^{*}}{2\alpha^{*}+d}\alpha_lJ_{n,l}(\boldsymbol{\alpha})}=\rho_n\epsilon_n.
\end{align*}
The expression immediately right of $Mr_n\epsilon_n$ is $\max_{\boldsymbol{j}}2^{\sum_{l=1}^dj_l/2}\max_{\boldsymbol{k}}|\theta^{*}_{\boldsymbol{j},\boldsymbol{k}}-\theta^0_{\boldsymbol{j},\boldsymbol{k}}|$, and hence we can conclude that $\boldsymbol{\theta}^{*}\in\mathcal{G}_n(r_n)$. However since $i_{n,l}>\frac{2\alpha^{*}}{2\alpha^{*}+d}J_{n,l}(\boldsymbol{\alpha})$, we will have
\begin{align*}
\|\boldsymbol{\theta}^{*}-\boldsymbol{\theta}_0\|+d/\sqrt{n}&=R2^{-\sum_{l=1}^d\alpha_li_{n,l}\left(\frac{1}{d}+\frac{1}{2\alpha^{*}}\right)\left(1+\frac{d}{2\alpha^{*}}\right)}+d/\sqrt{n}\\
&\lesssim2^{-\sum_{l=1}^d\alpha_l\frac{2\alpha^{*}}{2\alpha^{*}+d}J_{n,l}(\boldsymbol{\alpha})\left(\frac{1}{d}+\frac{1}{2\alpha^{*}}\right)\left(1+\frac{d}{2\alpha^{*}}\right)}+1/\sqrt{n}\lesssim\sqrt{\log{n}/n}
\end{align*}
and hence $\inf_{f\in\mathcal{B}^{\boldsymbol{\alpha}}_{\infty,\infty}(R):\|f-f_0\|_\infty>Mr_n\epsilon_n}\|f-f_0\|_n^2\lesssim\log{n}/n$. The first statement follows by substituting this back to \eqref{eq:alter2}.

For the second statement, choose $h_{n,l}(\boldsymbol{\alpha}),l=1,\dotsc,d$ such that $2^{h_{n,l}(\boldsymbol{\alpha})}=\epsilon_n^{-2\alpha^{*}/\{\alpha_l(2\alpha^{*}+d)\}}$. Collect $\boldsymbol{h}_n(\boldsymbol{\alpha})=(h_{n,1}(\boldsymbol{\alpha}),\dotsc,h_{n,d}(\boldsymbol{\alpha}))^T$ and construct $\boldsymbol{\psi}_{\boldsymbol{h}_n(\boldsymbol{\alpha})}(\boldsymbol{x})$ by concatenating $\psi_{\boldsymbol{j},\boldsymbol{k}}(\boldsymbol{x})$ across $\boldsymbol{N}\leq\boldsymbol{j}\leq\boldsymbol{h}_n(\boldsymbol{\alpha})-\boldsymbol{1}_d$ and all $\boldsymbol{k}$ in lexicographic order. Let us define $\boldsymbol{\Psi}:=(\boldsymbol{\psi}_{\boldsymbol{h}_n(\boldsymbol{\alpha})}(\boldsymbol{X}_1)^T,\cdots,\boldsymbol{\psi}_{\boldsymbol{h}_n(\boldsymbol{\alpha})}(\boldsymbol{X}_n)^T)^T$ to be the wavelet basis matrix. We consider the plug-in test $\Phi_n=\mathbbm{1}\{\|\widehat{f}_{n,\boldsymbol{\alpha}}-f_0\|_{\infty}>M_0\rho_n\epsilon_n\}$ for some constant $0<M_0<M$, by using the least squares estimator $\widehat{f}_{n,\boldsymbol{\alpha}}(\boldsymbol{x}):=\boldsymbol{\psi}_{\boldsymbol{h}_n(\boldsymbol{\alpha})}(\boldsymbol{x})^T(\boldsymbol{\Psi}^T\boldsymbol{\Psi})^{-1}\boldsymbol{\Psi}^T\boldsymbol{Y}$.

For any $f_0\in\mathcal{B}^{\boldsymbol{\alpha}}_{\infty,\infty}(R)$, we know from \eqref{eq:type21} of Lemma \ref{lem:type2} that the least squares bias is $\|\mathrm{E}_0\widehat{f}_{n,\boldsymbol{\alpha}}-f_0\|_\infty\leq Cd\epsilon_n^{2\alpha^{*}/(2\alpha^{*}+d)}$. Recall that $\rho_n=\epsilon_n^{-d/(2\alpha^{*}+d)}$. Hence by the triangle inequality and taking $M_0>Cd$,
\begin{align*}
\mathrm{E}_0\Phi_n&\leq P_0\left(\|\widehat{f}_{n,\boldsymbol{\alpha}}-\mathrm{E}_0\widehat{f}_{n,\boldsymbol{\alpha}}\|_\infty>M_0\rho_n\epsilon_n-\|\mathrm{E}_0\widehat{f}_{n,\boldsymbol{\alpha}}-f_0\|_\infty\right)\\
&\qquad\leq P_0\left(\|\widehat{f}_{n,\boldsymbol{\alpha}}-\mathrm{E}_0\widehat{f}_{n,\boldsymbol{\alpha}}\|_\infty>(M_0-Cd)\epsilon_n^{2\alpha^{*}/(2\alpha^{*}+d)}\right).
\end{align*}
Now apply \eqref{eq:type22} of Lemma \ref{lem:type2} with $x=C_In\epsilon_n^2$ and since $\epsilon_n\gg\sqrt{\log{n}/n}$,
\begin{align*}
Q_1\sqrt{\frac{2^{\sum_{l=1}^dh_{n,l}(\boldsymbol{\alpha})}\log{n}}{n}}+\sqrt{2Q_2C_{I}}2^{\sum_{l=1}^dh_{n,l}(\boldsymbol{\alpha})/2}\epsilon_n\leq2\sqrt{2Q_2C_I}\epsilon_n^{2\alpha^{*}/(2\alpha^{*}+d)},
\end{align*}
when $n$ is large enough. Therefore, take $M_0>2\sqrt{2Q_2C_I}+Cd$ and we will have $\mathrm{E}_0\Phi_n\leq e^{-C_In\epsilon_n^2}$.

For the Type II error with $f\in\{\mathcal{B}^{\boldsymbol{\alpha}}_{\infty,\infty}(R):\|f-f_0\|_\infty>M\rho_n\epsilon_n\}$ such that $M>M_0$, we apply the reverse triangle inequality twice to yield
\begin{align*}
&\mathrm{E}_f(1-\Phi_n)=P_f\left(\|\widehat{f}_{n,\boldsymbol{\alpha}}-f_0\|_\infty\leq M_0\rho_n\epsilon_n\right)\\
&\leq P_f\left(\|\widehat{f}_{n,\boldsymbol{\alpha}}-\mathrm{E}_f\widehat{f}_{n,\boldsymbol{\alpha}}\|_\infty\geq\|f-f_0\|_\infty-M_0\rho_n\epsilon_n-\|\mathrm{E}_f\widehat{f}_{n,\boldsymbol{\alpha}}-f\|_\infty\right).
\end{align*}
Now since $f\in\mathcal{B}^{\boldsymbol{\alpha}}_{\infty,\infty}(R)$, we apply \eqref{eq:type21} again to conclude $\|\mathrm{E}_f\widehat{f}_{n,\boldsymbol{\alpha}}-f\|_\infty\leq Cd\epsilon_n^{2\alpha^{*}/(2\alpha^{*}+d)}$. Hence we are in the same situation as in the Type I error case, and we can use the same argument to conclude that for $M$ and $n$ large enough, we have $\sup_{f\in\mathcal{B}^{\boldsymbol{\alpha}}_{\infty,\infty}(R):\|f-f_0\|_\infty>M\rho_n\epsilon_n}\mathrm{E}_f(1-\Phi_n)\leq e^{-C_{II}n\epsilon_n^2}$ for some constant $C_{II}>0$ when $\rho_n=\epsilon_n^{-d/(2\alpha^{*}+d)}$. The last statement is proved using the master theorem (see Theorem 3 of \citep{converge2007}) once we have tests with exponential errors. The Kullback-Leibler neighborhood and prior complement criteria follow the same steps as in Lemma \ref{lem:l2contract} to prove $L_2$-contraction rate.
\end{proof}

\section{Technical lemmas}\label{sec:proof3}
The lemma below quantifies the error in approximating Riemann's sum with its integral version, and it is useful to give size estimates of various discrete sums found in this paper. Let $\langle f,g\rangle$ be the inner product of two functions $f,g$ in Hilbert space.
\begin{lemma}\label{lem:discrete}
Suppose $\partial^df/(\partial x_1\cdots\partial x_d)\in L_1$, then if the fixed design points are chosen such that \eqref{eq:cdf} holds, we have for some constant $C>0$,
\begin{align*}
\left|\left|\frac{1}{n}\sum_{i=1}^nf(\boldsymbol{X}_i)\right|-\left|\int_{[0,1]^d}f(\boldsymbol{x})d\boldsymbol{x}\right|\right|\leq C\frac{1}{n}\int_{[0,1]^d}\left|\frac{\partial^d}{\partial x_1,\cdots\partial x_d}f(\boldsymbol{x})\right|d\boldsymbol{x}.
\end{align*}
\end{lemma}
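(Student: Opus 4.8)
The plan is to reduce the multivariate statement to a telescoping sum of one-dimensional Koksma--Hlawka-type estimates, using the hypothesis \eqref{eq:cdf} to control the discrepancy between the empirical measure $G_n$ and the uniform measure $U$. First I would write
\[
\frac1n\sum_{i=1}^n f(\boldsymbol{X}_i)-\int_{[0,1]^d}f(\boldsymbol{x})\,d\boldsymbol{x}
=\int_{[0,1]^d}f(\boldsymbol{x})\,d(G_n-U)(\boldsymbol{x}),
\]
and then integrate by parts (in the Stieltjes/Fubini sense, one coordinate at a time) to move all derivatives onto $f$ and leave the signed measure acting through its distribution function $G_n-U$. Concretely, repeated integration by parts expresses the right-hand side as
\[
(-1)^d\int_{[0,1]^d}\bigl(G_n(\boldsymbol{x})-U(\boldsymbol{x})\bigr)\,\frac{\partial^d f}{\partial x_1\cdots\partial x_d}(\boldsymbol{x})\,d\boldsymbol{x}
\]
plus boundary terms; the boundary terms either vanish (where $G_n-U=0$, e.g.\ on the faces $x_l=0$) or are themselves lower-dimensional integrals of mixed partials against the restricted discrepancy, and the same hypothesis \eqref{eq:cdf} bounds those restrictions by $O(1/n)$ as well. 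The hypothesis $\partial^d f/(\partial x_1\cdots\partial x_d)\in L_1$ is exactly what makes this last integral finite.

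Second I would apply the bound $\sup_{\boldsymbol{x}}|G_n(\boldsymbol{x})-U(\boldsymbol{x})|=O(1/n)$ from \eqref{eq:cdf} to pull the discrepancy out of the integral, giving
\[
\left|\frac1n\sum_{i=1}^n f(\boldsymbol{X}_i)-\int_{[0,1]^d}f\right|
\le \frac{C}{n}\int_{[0,1]^d}\left|\frac{\partial^d f}{\partial x_1\cdots\partial x_d}(\boldsymbol{x})\right|d\boldsymbol{x},
\]
with $C$ absorbing the finitely many boundary contributions (there are $2^d$ faces, a constant not depending on $n$). Third, I would pass from this two-sided estimate on the signed difference to the stated inequality on the difference of absolute values via the elementary fact $\bigl||a|-|b|\bigr|\le|a-b|$, which immediately yields the displayed conclusion.

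The main obstacle is making the multivariate integration-by-parts rigorous and keeping careful track of the boundary terms: in $d$ dimensions one must iterate a one-variable Stieltjes integration by parts and at each stage check that the boundary term at $x_l=0$ vanishes (because $G_n(\boldsymbol{x})=0$ whenever some $x_l=0$, just as $U$ does) while the term at $x_l=1$ reduces the problem to a $(d-1)$-dimensional marginal discrepancy, which again satisfies an $O(1/n)$ bound inherited from \eqref{eq:cdf}. An alternative, cleaner route that avoids explicit boundary bookkeeping is to invoke the classical Koksma--Hlawka inequality directly: it states that $|\tfrac1n\sum_i f(\boldsymbol{X}_i)-\int f|\le V_{HK}(f)\,D_n^*$, where $D_n^*$ is the star-discrepancy of $\{\boldsymbol{X}_i\}$ (which is $O(1/n)$ by \eqref{eq:cdf}) and $V_{HK}(f)$ is the Hardy--Krause variation, bounded by $\sum_{\emptyset\neq u\subseteq\{1,\dots,d\}}\int |\partial^{|u|}f/\partial x_u|$ restricted to the face $x_{u^c}=1$; under the stated hypothesis the leading term $\int|\partial^d f/\partial x_1\cdots\partial x_d|$ dominates and the rest are again constant-multiple lower-order face integrals, giving the claim. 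I would present the integration-by-parts argument as the main proof since it is self-contained, and remark that it is a form of Koksma--Hlawka.
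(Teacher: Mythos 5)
Your approach is essentially the same as the paper's: write $\tfrac1n\sum_i f(\boldsymbol{X}_i)=\int f\,dG_n$, split off $\int f\,d(G_n-U)$, integrate by parts to move the derivative onto $f$, and pull out $\|G_n-U\|_\infty=O(1/n)$; your framing via Koksma--Hlawka is the standard way to see the same computation, and you are in fact more alert than the paper's own proof to the multivariate boundary terms (the paper writes a single corner term $f(\boldsymbol{x})(G_n-U)(\boldsymbol{x})|_{\boldsymbol{0}}^{\boldsymbol{1}_d}$, which is not what iterated Stieltjes integration by parts actually produces for $d>1$).

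However, the step where you ``absorb the finitely many boundary contributions into $C$'' is a genuine gap --- and it is the same gap the paper's proof hides. The boundary/face contributions are of the form $O(n^{-1})\int_{\text{face }x_{u^c}=\boldsymbol{1}}|\partial^{|u|}f/\partial x_u|$ for proper nonempty $u\subsetneq\{1,\dots,d\}$, and these lower-order face integrals are \emph{not} dominated by $\int_{[0,1]^d}|\partial^d f/\partial x_1\cdots\partial x_d|$ in general, so no fixed constant $C$ can swallow them. Concretely, for $f(x,y)=g(x)+h(y)$ on $[0,1]^2$ the right-hand side vanishes identically ($\partial^2 f/\partial x\partial y\equiv 0$) while the left-hand side does not vanish for generic design points and generic $g,h$, so the lemma as stated is false without additional hypotheses controlling the lower-order Hardy--Krause variation. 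The lemma is fine as used in the paper, where $f$ is always a tensor product $\prod_l g_l(x_l)$ of compactly supported wavelet factors --- then each face integral factors and is of the same order as (or smaller than) the top-order term --- but if you want a clean general statement you should either add the full Hardy--Krause variation to the right-hand side or restrict $f$ to tensor products, and your write-up should make the absorption step explicit rather than asserting it.
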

\begin{proof}
Note that $n^{-1}\sum_{i=1}^nf(\boldsymbol{X}_i)=\int_{[0,1]^d}f(\boldsymbol{x})dG_n(\boldsymbol{x})$ where $G_n(\boldsymbol{x})$ is the empirical distribution. By the triangle inequality,
\begin{align*}
\left|\int_{[0,1]^d}f(\boldsymbol{x})dG_n(\boldsymbol{x})\right|\leq\left|\int_{[0,1]^d}f(\boldsymbol{x})dU(\boldsymbol{x})\right|+\left|\int_{[0,1]^d}f(\boldsymbol{x})d(G_n-U)(\boldsymbol{x})\right|,
\end{align*}
where $U(\boldsymbol{x})$ is the $\mathrm{Uniform}([0,1]^d)$ cumulative distribution function. Thus, the first term is $|\int_{[0,1]^d}f(\boldsymbol{x})d\boldsymbol{x}|$. To bound the second term, observe that by the multivariate integration by parts, $\int_{[0,1]^d}f(\boldsymbol{x})d(G_n-U)(\boldsymbol{x})$ is
\begin{align*}
f(\boldsymbol{x})(G_n-U)(\boldsymbol{x})|_{\boldsymbol{0}}^{\boldsymbol{1}_d}+\int_{[0,1]^d}\frac{\partial^df(\boldsymbol{x})}{\partial x_1,\cdots\partial x_d}(G_n-U)(\boldsymbol{x})d\boldsymbol{x}.
\end{align*}
Since $(G_n-U)(\boldsymbol{1}_d)=(G_n-U)(\boldsymbol{0})=0$, it follows by assumption \eqref{eq:cdf} that the second term is bounded above by
\begin{align*}
\|G_n-U\|_\infty\int_{[0,1]^d}\left|\frac{\partial^df(\boldsymbol{x})}{\partial x_1,\cdots\partial x_d}\right|d\boldsymbol{x}\lesssim\frac{1}{n}\int_{[0,1]^d}\left|\frac{\partial^df(\boldsymbol{x})}{\partial x_1,\cdots\partial x_d}\right|d\boldsymbol{x}.
\end{align*}
For the other direction, use the reverse triangle inequality. This together with the upper bound established above will then prove the result.
\end{proof}

\begin{lemma}\label{lem:phiphi}
Under the assumption of \eqref{eq:cdf},
\begin{align}
\left||(\boldsymbol{\Psi}_{\boldsymbol{a}}^T\boldsymbol{\Psi}_{\boldsymbol{b}})_{\boldsymbol{c},\boldsymbol{e}}|
-n|\langle\psi_{\boldsymbol{a},\boldsymbol{c}},\psi_{\boldsymbol{b},\boldsymbol{e}}\rangle|\right|&\lesssim\prod_{l=1}^d2^{(a_l+b_l)/2},\nonumber\\
|(\boldsymbol{\Psi}_{\boldsymbol{a}}^T\boldsymbol{B})_{\boldsymbol{c},\boldsymbol{m}}|&
\lesssim\prod_{l=1}^d2^{a_l/2}.\label{eq:phiphi1}
\end{align}
In particular, for $j_l\leq\widetilde{J}_{n,l},l=1,\dotsc,d$ where $\widetilde{J}_{n,l}$ is increasing with $n$, then if $2^{\sum_{l=1}^d\widetilde{J}_{n,l}}=o(n)$, this implies that $(\boldsymbol{\Psi}_{\boldsymbol{j}}^T\boldsymbol{\Psi}_{\boldsymbol{j}})_{\boldsymbol{k},\boldsymbol{k}}\asymp n$ by the orthonormality of $\psi_{\boldsymbol{j},\boldsymbol{k}}$, and
\begin{align}\label{eq:phiphi3}
\sum_{i=1}^n|\psi_{\boldsymbol{j},\boldsymbol{k}}(\boldsymbol{X}_i)|\lesssim n\prod_{l=1}^d2^{-j_l/2}.
\end{align}
\end{lemma}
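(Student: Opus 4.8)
The plan is to reduce every estimate to Lemma~\ref{lem:discrete}, applied to suitable products of (tensor) wavelets; after that the compact support and $\eta$-regularity of the CDV system finish the job. Write $(\boldsymbol{\Psi}_{\boldsymbol{a}}^T\boldsymbol{\Psi}_{\boldsymbol{b}})_{\boldsymbol{c},\boldsymbol{e}}=\sum_{i=1}^n f(\boldsymbol{X}_i)$ with $f=\psi_{\boldsymbol{a},\boldsymbol{c}}\psi_{\boldsymbol{b},\boldsymbol{e}}$, so that $\langle\psi_{\boldsymbol{a},\boldsymbol{c}},\psi_{\boldsymbol{b},\boldsymbol{e}}\rangle=\int_{[0,1]^d}f$ and $\partial^d f/(\partial x_1\cdots\partial x_d)$ is bounded with compact support, hence in $L_1$. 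Multiplying the inequality of Lemma~\ref{lem:discrete} by $n$ gives
\[
\bigl||(\boldsymbol{\Psi}_{\boldsymbol{a}}^T\boldsymbol{\Psi}_{\boldsymbol{b}})_{\boldsymbol{c},\boldsymbol{e}}|-n|\langle\psi_{\boldsymbol{a},\boldsymbol{c}},\psi_{\boldsymbol{b},\boldsymbol{e}}\rangle|\bigr|\le C\int_{[0,1]^d}\Bigl|\tfrac{\partial^d}{\partial x_1\cdots\partial x_d}(\psi_{\boldsymbol{a},\boldsymbol{c}}\psi_{\boldsymbol{b},\boldsymbol{e}})\Bigr|\,d\boldsymbol{x},
\]
so it suffices to bound the right-hand side by $\prod_{l=1}^d2^{(a_l+b_l)/2}$. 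For the father-wavelet estimate \eqref{eq:phiphi1} I would first use the triangle inequality, $|(\boldsymbol{\Psi}_{\boldsymbol{a}}^T\boldsymbol{B})_{\boldsymbol{c},\boldsymbol{m}}|\le\bigl||(\boldsymbol{\Psi}_{\boldsymbol{a}}^T\boldsymbol{B})_{\boldsymbol{c},\boldsymbol{m}}|-n|\langle\psi_{\boldsymbol{a},\boldsymbol{c}},\varphi_{\boldsymbol{N},\boldsymbol{m}}\rangle|\bigr|+n|\langle\psi_{\boldsymbol{a},\boldsymbol{c}},\varphi_{\boldsymbol{N},\boldsymbol{m}}\rangle|$; the second term vanishes because $V_{N_l}\perp W_{a_l}$ for $a_l\ge N_l$ forces $\langle\psi_{a_l,c_l},\varphi_{N_l,m_l}\rangle=0$ coordinatewise, and the first term is treated exactly as above with $\varphi_{\boldsymbol{N},\boldsymbol{m}}$ in place of $\psi_{\boldsymbol{b},\boldsymbol{e}}$.

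The key computation is the $d$-fold mixed derivative. Since $\psi_{\boldsymbol{a},\boldsymbol{c}}(\boldsymbol{x})\psi_{\boldsymbol{b},\boldsymbol{e}}(\boldsymbol{x})=\prod_{l=1}^d\psi_{a_l,c_l}(x_l)\psi_{b_l,e_l}(x_l)$, the mixed partial factorizes and the integral becomes $\prod_{l=1}^d\int_0^1\bigl|\tfrac{d}{dx_l}(\psi_{a_l,c_l}\psi_{b_l,e_l})\bigr|\,dx_l$. Fix a coordinate and assume WLOG $a_l\le b_l$. By the product rule the integrand is $\le|\psi_{a_l,c_l}'||\psi_{b_l,e_l}|+|\psi_{a_l,c_l}||\psi_{b_l,e_l}'|$; using $\|\psi_{j,k}\|_\infty\lesssim 2^{j/2}$, $\|\psi_{j,k}'\|_\infty\lesssim 2^{3j/2}$ (from $\eta$-regularity) and integrating over the shorter support of $\psi_{b_l,e_l}$, of length $\lesssim 2^{-b_l}$, each piece is $\lesssim 2^{-b_l}(2^{3a_l/2}2^{b_l/2}+2^{a_l/2}2^{3b_l/2})=2^{3a_l/2-b_l/2}+2^{a_l/2+b_l/2}\lesssim 2^{(a_l+b_l)/2}$, where the last step uses precisely $a_l\le b_l$. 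Taking the product over $l$ gives the first displayed bound; the same argument with $\varphi_{\boldsymbol{N},\boldsymbol{m}}$ (now $\psi_{a_l,c_l}$ is the finer factor, and $N_l$ is fixed so $2^{N_l/2}\asymp1$) gives \eqref{eq:phiphi1}. Specializing $\boldsymbol{a}=\boldsymbol{b}=\boldsymbol{j}$, $\boldsymbol{c}=\boldsymbol{e}=\boldsymbol{k}$ and using the orthonormality $\langle\psi_{\boldsymbol{j},\boldsymbol{k}},\psi_{\boldsymbol{j},\boldsymbol{k}}\rangle=1$ together with the hypothesis $\prod_{l=1}^d2^{j_l}\le 2^{\sum_l\widetilde{J}_{n,l}}=o(n)$ yields $|(\boldsymbol{\Psi}_{\boldsymbol{j}}^T\boldsymbol{\Psi}_{\boldsymbol{j}})_{\boldsymbol{k},\boldsymbol{k}}-n|\lesssim\prod_l2^{j_l}=o(n)$, hence $(\boldsymbol{\Psi}_{\boldsymbol{j}}^T\boldsymbol{\Psi}_{\boldsymbol{j}})_{\boldsymbol{k},\boldsymbol{k}}\asymp n$.

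For \eqref{eq:phiphi3} I would not differentiate $|\psi_{\boldsymbol{j},\boldsymbol{k}}|$ but instead bound $\sum_{i=1}^n|\psi_{\boldsymbol{j},\boldsymbol{k}}(\boldsymbol{X}_i)|\le\|\psi_{\boldsymbol{j},\boldsymbol{k}}\|_\infty\cdot\#\{i:\boldsymbol{X}_i\in\mathrm{supp}\,\psi_{\boldsymbol{j},\boldsymbol{k}}\}$. The support lies in a box $B$ with $U(B)\lesssim\prod_l2^{-j_l}$; writing the measure of $B$ as the alternating sum of $G_n$ (resp.\ $U$) over the $2^d$ corners of $B$ and invoking \eqref{eq:cdf} gives $\#\{i:\boldsymbol{X}_i\in B\}=nG_n(B)\le nU(B)+2^d n\|G_n-U\|_\infty\lesssim n\prod_l2^{-j_l}+1$; since $\|\psi_{\boldsymbol{j},\boldsymbol{k}}\|_\infty\lesssim\prod_l2^{j_l/2}$ this produces $\sum_i|\psi_{\boldsymbol{j},\boldsymbol{k}}(\boldsymbol{X}_i)|\lesssim n\prod_l2^{-j_l/2}+\prod_l2^{j_l/2}\lesssim n\prod_l2^{-j_l/2}$, the last step again using $\prod_l2^{j_l}=o(n)$. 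The only genuinely delicate points are the one-dimensional integral estimate, where one must track which scale index is larger so as not to lose the $2^{(a_l+b_l)/2}$ scaling, and the corner-count argument for \eqref{eq:phiphi3}; everything else is bookkeeping over the $d$ coordinates.
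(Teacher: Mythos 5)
Your proof of the two displays in \eqref{eq:phiphi1} and of $(\boldsymbol{\Psi}_{\boldsymbol{j}}^T\boldsymbol{\Psi}_{\boldsymbol{j}})_{\boldsymbol{k},\boldsymbol{k}}\asymp n$ is correct and follows essentially the same route as the paper: apply Lemma~\ref{lem:discrete} to the product of two (tensor) wavelets, restrict to the intersection of supports, and bound the integral of the mixed derivative using the $\eta$-regularity and the geometry of dyadic supports. You even handle one point more carefully than the paper: because the tensor product factorizes, $\partial^d(\psi_{\boldsymbol{a},\boldsymbol{c}}\psi_{\boldsymbol{b},\boldsymbol{e}})/\partial x_1\cdots\partial x_d$ has $2^d$ Leibniz terms, not the two the paper writes down; your per-coordinate factorization $\prod_l\int_0^1|(\psi_{a_l,c_l}\psi_{b_l,e_l})'|$ with the WLOG ordering $a_l\le b_l$ handles this cleanly and still gives $\prod_l 2^{(a_l+b_l)/2}$. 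For the second inequality in \eqref{eq:phiphi1} your use of the orthogonality $W_{a_l}\perp V_{N_l}$ to kill the main integral term matches the paper exactly.

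Where you genuinely diverge from the paper is in \eqref{eq:phiphi3}. The paper applies Lemma~\ref{lem:discrete} directly with $f=|\psi_{\boldsymbol{j},\boldsymbol{k}}|$, which requires differentiating an absolute value and invoking $\frac{d}{dx}|\psi|=\psi'\,\mathrm{sgn}(\psi)$ almost everywhere; this is fine because $\psi_{j,k}$ is $C^1$ with finitely many zeros, so $|\psi_{j,k}|$ is absolutely continuous, but it is a technicality one has to be aware of. Your support-counting argument avoids this entirely: bound $\sum_i|\psi_{\boldsymbol{j},\boldsymbol{k}}(\boldsymbol{X}_i)|$ by $\|\psi_{\boldsymbol{j},\boldsymbol{k}}\|_\infty$ times the number of design points in the support box, and count design points via a $2^d$-corner inclusion-exclusion together with \eqref{eq:cdf}. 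This is more elementary, stays entirely within the discrete world, and makes the role of $2^{\sum_l j_l}=o(n)$ transparent (it is what makes $\prod_l 2^{j_l/2}$ absorbed into $n\prod_l 2^{-j_l/2}$). Both routes reach the same conclusion; yours trades the slick appeal to Lemma~\ref{lem:discrete} for a cleaner justification and would arguably be preferable for a reader skeptical of the almost-everywhere differentiation step.
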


\begin{proof}
To prove the first statement of \eqref{eq:phiphi1}, apply Lemma \ref{lem:discrete} with $f(\boldsymbol{x})=\psi_{\boldsymbol{a},\boldsymbol{c}}(\boldsymbol{x})\psi_{\boldsymbol{b},\boldsymbol{e}}(\boldsymbol{x})$. Let us denote $I(f):=\int_{[0,1]^d}\left|\frac{\partial^d}{\partial x_1,\cdots\partial x_d}f(\boldsymbol{x})\right|d\boldsymbol{x}$. Then,
\begin{align*}
I(f)=\int_{[0,1]^d}\left|\psi_{\boldsymbol{a},\boldsymbol{c}}(\boldsymbol{x})\frac{\partial^d}{\partial x_1\cdots\partial x_d}\psi_{\boldsymbol{b},\boldsymbol{e}}(\boldsymbol{x})+\psi_{\boldsymbol{b},\boldsymbol{e}}(\boldsymbol{x})\frac{\partial^d}{\partial x_1\cdots\partial x_d}\psi_{\boldsymbol{a},\boldsymbol{c}}(\boldsymbol{x})\right|d\boldsymbol{x}.
\end{align*}
By construction, the support of the CDV wavelet $\psi_{\boldsymbol{a},\boldsymbol{c}}$ is some compact set $\mathcal{I}_{\boldsymbol{a}}$ such that its Lebesgue measure is $O\left(\prod_{l=1}^d2^{-a_l}\right)$. Therefore, we can restrict the domain of integration to $\mathcal{I}_{\boldsymbol{a}}\cap\mathcal{I}_{\boldsymbol{b}}$, which is the intersection of the supports of $\psi_{\boldsymbol{a},\boldsymbol{c}}$ and $\psi_{\boldsymbol{b},\boldsymbol{e}}$. Since the wavelets and their derivatives are uniformly bounded, we can upper bound $I(f)$ up to some constant multiple by
\begin{align*}
\prod_{l=1}^d2^{(a_l+b_l)/2}\left(\int_{\mathcal{I}_{\boldsymbol{b}}}\prod_{l=1}^d2^{b_l}\|\psi_{k_l}\|_{\infty}\|\psi_{k_l}^{'}\|_{\infty}d\boldsymbol{x}
+\int_{\mathcal{I}_{\boldsymbol{a}}}\prod_{l=1}^d2^{a_l}\|\psi_{k_l}\|_{\infty}\|\psi_{k_l}^{'}\|_{\infty}d\boldsymbol{x}\right),
\end{align*}
which is of the order $\prod_{l=1}^d2^{(a_l+b_l)/2}$.

For the second assertion of \eqref{eq:phiphi1}, we take $f(\boldsymbol{x})=\psi_{\boldsymbol{a},\boldsymbol{c}}(\boldsymbol{x})\varphi_{\boldsymbol{N},\boldsymbol{m}}(\boldsymbol{x})$. Then $\langle\psi_{\boldsymbol{a},\boldsymbol{c}},\varphi_{\boldsymbol{N},\boldsymbol{m}}\rangle=0$ by orthonormality, and $I(f)$ is of the order of $\prod_{l=1}^d2^{a_l/2}\left(1+\int_{\mathcal{I}_{\boldsymbol{a}}}\prod_{l=1}^d2^{a_l}d\boldsymbol{x}\right)\lesssim\prod_{l=1}^d2^{a_l/2}$. The assertion then follows by appealing to Lemma \ref{lem:discrete}.

To prove \eqref{eq:phiphi3}, let $f(\boldsymbol{x})=|\psi_{\boldsymbol{j},\boldsymbol{k}}(\boldsymbol{x})|$ and note that $\int_{[0,1]^d}f(\boldsymbol{x})d\boldsymbol{x}=\prod_{l=1}^d\int_0^1|\psi_{j_k,k_l}(x_l)|dx_l$. Then if $\psi_{j_l,k_l}$ is an interior CDV wavelet, we will have by a change of variable
\begin{align*}
\int_0^1|\psi_{j_l,k_l}(x_l)|dx_l=\int_{2^{-j_l}(-N_l+1+k_l)}^{2^{-j_l}(N_l+k_l)}2^{j_l/2}|\psi_{k_l}(2^{j_l}x_l)|dx_l\lesssim2^{-j_l/2},
\end{align*}
where the constant in $\lesssim$ above does not depend on $j_l$. The same argument holds for the boundary corrected case, where the lower limit of the integral is replaced by $0$ if the $0$-boundary is considered and the upper limit by $1$ for the $1$-boundary. As a result, $\int_{[0,1]^d}f(\boldsymbol{x})d\boldsymbol{x}\lesssim\prod_{l=1}^d2^{-j_l/2}$. By restricting the domain of integration to the support of $\psi_{\boldsymbol{j},\boldsymbol{k}}$, i.e., $\mathcal{I}_{\boldsymbol{j}}$,
\begin{align*}
I(f)\lesssim\int_{\mathcal{I}_{\boldsymbol{j}}}\left|\prod_{l=1}^d2^{j_l/2+j_l}\psi^{'}_{k_l}(2^{j_l}x_l)\mathrm{sgn}[\psi_{k_l}(2^{j_l}x_l)]\right|d\boldsymbol{x}\lesssim\prod_{l=1}^d2^{j_l/2},
\end{align*}
with $\mathrm{sgn}(\cdot)$ denoting the sign function, i.e., $\mathrm{sgn}(x)=1$ if $x\geq0$ and is $-1$ if $x<0$. Therefore if $j_l\leq\widetilde{J}_{n,l}$ where $2^{\sum_{l=1}^d\widetilde{J}_{n,l}}=o(n)$, then the above is $o(1)$ and the result follows.
\end{proof}

\begin{remark}
For uniform random design, the stochastic version of Lemma \ref{lem:phiphi} can be deduced from Bernstein's inequality (see (3.24) in Theorem 3.1.7 of \citep{nickl2016}). In this case, \eqref{eq:phiphi1} is:
\begin{align*}
&P\left[\left|(\boldsymbol{\Psi}_{\boldsymbol{a}}^T\boldsymbol{\Psi}_{\boldsymbol{b}})_{\boldsymbol{c},\boldsymbol{e}}-
n\langle\psi_{\boldsymbol{a},\boldsymbol{c}},\psi_{\boldsymbol{b},\boldsymbol{e}}\rangle\right|\lesssim\sqrt{n}\prod_{l=1}^d2^{(a_l+b_l)/2}\right]\geq1-2e^{-2^{-\sum_{l=1}^d\left(\frac{a_l+b_l}{2}\right)}}\\
&P\left[\left|(\boldsymbol{\Psi}_{\boldsymbol{a}}^T\boldsymbol{B})_{\boldsymbol{c},\boldsymbol{m}}\right|\lesssim\sqrt{n}\prod_{l=1}^d2^{a_l/2}\right]\geq1-2e^{-2^{\sum_{l=1}^da_l}}.
\end{align*}
The extra $\sqrt{n}$ is due to the fact that $\|G_n-U\|_\infty=O_P(n^{-1/2})$ by Donsker's theorem in the random case, instead of the rate $O(n^{-1})$ for the fixed design case as in \eqref{eq:cdf}. In particular, for $j_l\leq\widetilde{J}_{n,l},l=1,\dotsc,d$ where $\widetilde{J}_{n,l}$ is increasing with $n$, then if $2^{\sum_{l=1}^d\widetilde{J}_{n,l}}=o(\sqrt{n})$, this implies that $(\boldsymbol{\Psi}_{\boldsymbol{j}}^T\boldsymbol{\Psi}_{\boldsymbol{j}})_{\boldsymbol{k},\boldsymbol{k}}\asymp n$ by the orthonormality of $\psi_{\boldsymbol{j},\boldsymbol{k}}$ and $\sum_{i=1}^n|\psi_{\boldsymbol{j},\boldsymbol{k}}(\boldsymbol{X}_i)|\lesssim n\prod_{l=1}^d2^{-j_l/2}$ with probability at least $1-2e^{-2^{\sum_{l=1}^dj_l}}$.
\end{remark}

The lemma below gives the $L_2$-posterior contraction rate for spike-and-slab prior in nonparametric regression models. It shows in particular that there is an extra logarithmic factor in the rate, and is a reflection of the fact that separable selection rules (coefficient-wise spike-and-slab) will have at least a logarithmic penalty when trying to estimate $f$ adaptively under a global $L_2$-loss.

\begin{lemma}\label{lem:l2contract}
Under the hierarchical spike-and-slab prior in \eqref{eq:prior}, there exist constants $M,P_4>0$ such that for any $0<\alpha_l<\eta+1,l=1,\dotsc,d$ and uniformly over $f_0\in\mathcal{B}^{\boldsymbol{\alpha}}_{\infty,\infty}(R)$,
\begin{align*}
\mathrm{E}_0\Pi\left(\|f-f_0\|_n+|\sigma^2-\sigma_0^2|>M(n/\log{n})^{-\alpha^{*}/(2\alpha^{*}+d)}\middle|\boldsymbol{Y}\right)\leq n^{-P_4}.
\end{align*}
\end{lemma}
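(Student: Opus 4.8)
The plan is to apply the master theorem for posterior contraction with independent, non-identically distributed observations (Theorem 3 of \citep{converge2007}) to the joint parameter $(f,\sigma)$, with target rate $\epsilon_n=(n/\log n)^{-\alpha^{*}/(2\alpha^{*}+d)}$ and semimetric $d((f,\sigma),(f_0,\sigma_0))=\|f-f_0\|_n+|\sigma^2-\sigma_0^2|$. Because the errors are Gaussian with unknown but finite variance, on any set where $\sigma$ stays bounded away from $0$ and $\infty$ this semimetric is, up to constants, equivalent to the square root of the Kullback--Leibler divergence and to the Hellinger-type testing distance between the corresponding product densities; so it is enough to verify: (i) a sieve $\mathcal{F}_n$ with $\log$-covering number $\lesssim n\epsilon_n^2$ that carries tests against $\{d>M\epsilon_n\}$ with exponentially small errors; (ii) a Kullback--Leibler-type ball $B_n$ around $(f_0,\sigma_0)$ with $\Pi(B_n)\geq e^{-C_2 n\epsilon_n^2}$; (iii) $\Pi(\mathcal{F}_n^c)\leq e^{-C_3 n\epsilon_n^2}$. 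Granting these, the master theorem gives $\mathrm{E}_0\Pi(d>M\epsilon_n\mid\boldsymbol{Y})\lesssim e^{-c n\epsilon_n^2}$, which is $\leq n^{-P_4}$ for large $n$ since $n\epsilon_n^2\asymp n^{d/(2\alpha^{*}+d)}(\log n)^{2\alpha^{*}/(2\alpha^{*}+d)}$ grows like a positive power of $n$; this is why only the weaker polynomial bound is stated.

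For the sieve, let $\mathcal{F}_n$ consist of those $f$ in \eqref{eq:prior} with at most $s_n$ nonzero mother coefficients and all coefficients bounded by $B_n$ in absolute value, together with $\underline{\sigma}_n\le\sigma\le\overline{\sigma}_n$, where $s_n$ is a suitable multiple of $2^{\sum_l J_{n,l}(\boldsymbol{\alpha})}$, $B_n$ and $\overline{\sigma}_n$ are polynomial in $n$, and $\underline{\sigma}_n$ is polynomially small. Since there are at most $\binom{p_n}{s_n}$ active sets, with $p_n=\prod_l 2^{J_{n,l}}=\sqrt{n/\log n}$ the total number of mother coefficients, covering the corresponding Euclidean balls of radius $B_n$ in $\mathbb{R}^{s_n}$ (and the $\sigma$-interval) at scale $\epsilon_n$ gives $\log N\lesssim s_n\log(p_nB_n/\epsilon_n)\lesssim s_n\log n\asymp n\epsilon_n^2$; here Lemma \ref{lem:2phiphi} is used to move between $\|\cdot\|_n$ on functions truncated at level $\boldsymbol{J}_n$ and the Euclidean norm on coefficient vectors, since by that lemma the normalized Gram matrix is uniformly well conditioned. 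Tests for the $\|\cdot\|_n$-component on the sieve (convex in $f$ for fixed $\sigma$) are the classical Le Cam/Birg\'e tests; tests for the $|\sigma^2-\sigma_0^2|$-component are residual-based ($\chi^2$-type) tests, and the two combine. For $\Pi(\mathcal{F}_n^c)$: the number $|S|$ of active mother coefficients has $\mathrm{E}|S|=\sum_{\boldsymbol{j}}2^{\sum_l j_l}\omega_{\boldsymbol{j},n}\le\sum_{\boldsymbol{j}}2^{-\sum_l j_l\mu_l}=O(1)$ because each $\mu_l>1/2$, so a Chernoff bound gives $\Pi(|S|>s_n)\le e^{-cn\epsilon_n^2}$; the events $\{\|\boldsymbol{\vartheta}\|_\infty\vee\|\boldsymbol{\theta}\|_\infty>B_n\}$ and $\{\sigma\notin[\underline{\sigma}_n,\overline{\sigma}_n]\}$ carry exponentially small prior mass for such $B_n,\overline{\sigma}_n,\underline{\sigma}_n$, using that $p(\cdot)$ and $\pi_\sigma$ are fixed densities (for a heavy-tailed $\pi_\sigma$ one instead leaves $\{\sigma>\overline{\sigma}_n\}$ off the sieve and rejects it with a direct variance test of exponentially small error).

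The crux, and the step I expect to be the main obstacle, is the prior mass bound (ii); it is also where the logarithmic inflation of the rate comes from. Take the oracle resolution $2^{J_{n,l}(\boldsymbol{\alpha})}\asymp(n/\log n)^{\alpha^{*}/\{\alpha_l(2\alpha^{*}+d)\}}$ (so that $J_{n,l}(\boldsymbol{\alpha})\le J_{n,l}$ in the relevant range) and set $S^{*}=\{(\boldsymbol{j},\boldsymbol{k}):N_l\le j_l<J_{n,l}(\boldsymbol{\alpha})\}$, so that $|S^{*}|\asymp 2^{\sum_l J_{n,l}(\boldsymbol{\alpha})}=(n/\log n)^{d/(2\alpha^{*}+d)}$; the identity $|S^{*}|\log n\asymp n\epsilon_n^2$ is what pins down the $(n/\log n)$-rate. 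Consider the event that exactly the coordinates in $S^{*}$ are selected, that every selected $\theta_{\boldsymbol{j},\boldsymbol{k}}$ and every $\vartheta_{\boldsymbol{m}}$ lies within $\delta_n:=c\sqrt{\log n/n}$ of its true value, and that $|\sigma-\sigma_0|\le\epsilon_n$. On this event $\|f-f_0\|_n\lesssim\delta_n\sqrt{|S^{*}|}+\|K_{\boldsymbol{J}_n(\boldsymbol{\alpha})}(f_0)-f_0\|_\infty\lesssim\epsilon_n$, by Lemma \ref{lem:2phiphi}, Proposition \ref{prop:wavelet} (with $p=\infty$, and $\|\cdot\|_n\le\|\cdot\|_\infty$) and the choice of $J_{n,l}(\boldsymbol{\alpha})$; so the event lies in $B_n$. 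Its prior probability is at least $\pi_\sigma(|\sigma-\sigma_0|\le\epsilon_n)$ times $\prod_{(\boldsymbol{j},\boldsymbol{k})\in S^{*}}\omega_{\boldsymbol{j},n}(2\delta_n p_{\mathrm{min}})$ times $\prod_{\boldsymbol{m}}(2\delta_n p_{\mathrm{min}})$ times $\prod_{(\boldsymbol{j},\boldsymbol{k})\notin S^{*}}(1-\omega_{\boldsymbol{j},n})$, where the assumption $R\le R_0-1/2$ guarantees $\theta^0_{\boldsymbol{j},\boldsymbol{k}}\pm\delta_n,\vartheta^0_{\boldsymbol{m}}\pm\delta_n\in[-R_0,R_0]$, hence $p\ge p_{\mathrm{min}}>0$ there by \eqref{eq:density}. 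The last product is bounded below by a positive constant since $\sum_{\boldsymbol{j}}2^{\sum_l j_l}\omega_{\boldsymbol{j},n}=O(1)$; the father part contributes only $\prod_l 2^{N_l}=O(1)$ factors and is harmless; the $\sigma$-factor is $\gtrsim\epsilon_n\ge e^{-n\epsilon_n^2}$; and, using $\omega_{\boldsymbol{j},n}\ge n^{-\lambda}$ together with $2\delta_n p_{\mathrm{min}}\ge n^{-1}$ for large $n$, each factor in the $S^{*}$-product is $\ge n^{-(\lambda+1)}$, so $\prod_{S^{*}}\ge n^{-(\lambda+1)|S^{*}|}=e^{-(\lambda+1)|S^{*}|\log n}\ge e^{-C_2 n\epsilon_n^2}$. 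This establishes (ii). The delicate points throughout are keeping all the exponents aligned so that $|S^{*}|\log n$ stays exactly of order $n\epsilon_n^2$ — which forces the extra $\log n$ in the rate and distinguishes this separable spike-and-slab from a hierarchical one — and verifying the empirical-norm bias estimates and the empirical-norm/Euclidean equivalence uniformly over the resolution levels in play, via Lemmas \ref{lem:discrete}, \ref{lem:phiphi} and \ref{lem:2phiphi}.
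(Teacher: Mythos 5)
Your overall plan — verify the three conditions of the Ghosal--van der Vaart master theorem for the joint parameter $(f,\sigma)$ with the $(n/\log n)^{-\alpha^*/(2\alpha^*+d)}$ rate — is the same route the paper takes, and your prior-mass computation (event: exactly the coordinates below the oracle resolution are selected, each within $\delta_n=c\sqrt{\log n/n}$ of the truth, with $|S^*|\log n\asymp n\epsilon_n^2$ delivering the $\log n$ inflation) is essentially identical to the paper's, just using a box instead of a ball volume. The paper also uses Lemma \ref{lem:2phiphi} and Proposition \ref{prop:wavelet} in exactly the way you describe to move between $\|\cdot\|_n$ and the coefficient Euclidean norm and to control the bias.

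The genuine gap is in your sieve. You take $\mathcal{F}_n$ to be the set of functions with at most $s_n$ nonzero mother coefficients \emph{and all coefficients bounded by $B_n$}, and you need $\Pi(\mathcal{F}_n^c)\leq e^{-Cn\epsilon_n^2}$ while keeping $\log N(\epsilon_n,\mathcal{F}_n,\|\cdot\|_n)\lesssim s_n\log(B_n/\epsilon_n)\lesssim n\epsilon_n^2$. These two requirements are simultaneously satisfiable only if the slab density $p(\cdot)$ has at least exponentially decaying tails: with a union bound, $\Pi(\|\boldsymbol{\theta}\|_\infty>B_n)\geq\omega_{\boldsymbol{j},n}\,P(|\theta|>B_n)$ for the smallest $\boldsymbol{j}$, so $\Pi(\mathcal{F}_n^c)\leq e^{-Cn\epsilon_n^2}$ forces $P(|\theta|>B_n)\leq e^{-cn\epsilon_n^2}$, whereas the entropy bound forces $B_n$ to be at most polynomial in $n$. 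But the paper's standing assumptions on $p$ are only boundedness and a local lower bound on $[-R_0,R_0]$, and $t$-distributions (including Cauchy) are explicitly allowed, for which $P(|\theta|>B_n)$ is only polynomially small. So your sieve construction does not cover the full class of slab densities in the statement. You flag the analogous issue for $\pi_\sigma$ and propose a fix (a direct variance test outside $[\underline{\sigma}_n,\overline{\sigma}_n]$), but you do not address it for the slab, where the same repair is not available because $\|f-f_0\|_n$-tests are not free of entropy control.

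The paper sidesteps this entirely: its sieve $\mathcal{F}_n$ is defined by the \emph{resolution truncation} $\theta_{\boldsymbol{j},\boldsymbol{k}}=0$ for $\boldsymbol{j}\not\leq\boldsymbol{J}_n(\boldsymbol{\alpha})$, with no magnitude bound, and the entropy is controlled via \emph{local} slicing $\mathcal{F}_n^j=\{f\in\mathcal{F}_n: j\epsilon_n<\|f-f_0\|_n+|\sigma^2-\sigma_0^2|\leq(j+1)\epsilon_n\}$; on $\mathcal{F}_n^j$ the coefficient vector automatically lies in a Euclidean ball of radius $\lesssim j\epsilon_n$ around $\boldsymbol{\theta}_0$, so the covering number is $e^{Cjn\epsilon_n^2}$ without any explicit $B_n$. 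The complement $\Theta_n^c$ is then a purely combinatorial event (some coefficient above the oracle resolution is selected), whose posterior mass is only polynomially small but that is enough because the lemma itself only claims a polynomial bound $n^{-P_4}$. To repair your version you would either have to adopt the paper's slicing (and drop $B_n$), or restrict the class of slab priors to sub-exponential tails.
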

\begin{proof}
We will use the master theorem (see Theorem 3 of \cite{converge2007}) by constructing test function with exponential error probabilities, and verifying that the prior gives sufficient mass on Kullback-Leibler neighborhood around $(f_0,\sigma_0^2)$. Let $\epsilon_n\rightarrow0$ and $n\epsilon_n^2\rightarrow\infty$. For $l=1,\dotsc,d$, we choose $J_{n,l}(\boldsymbol{\alpha})$ such that $0.5U(n/\log{n})^{\alpha^{*}/\{\alpha_l(2\alpha^{*}+d)\}}\leq2^{J_{n,l}(\boldsymbol{\alpha})}\leq U(n/\log{n})^{\alpha^{*}/\{\alpha_l(2\alpha^{*}+d)\}}$ for some large enough constant $U>0$. For $\Theta_n:=\{\boldsymbol{\theta}:\theta_{\boldsymbol{j},\boldsymbol{k}}=\theta_{\boldsymbol{j},\boldsymbol{k}}\mathbbm{1}_{\{\boldsymbol{j}\leq\boldsymbol{J}_n(\boldsymbol{\alpha}),\boldsymbol{k}\}}\}$,
define sieves $\mathcal{F}_n:=\{f:\boldsymbol{\theta}\in\Theta_n\}$ consisting of functions with wavelet expansion truncated at levels $J_{n,l}(\boldsymbol{\alpha})-1, l=1,\dotsc,d$.

For any $f\in\mathcal{F}_n$ and by the property of $L_2$-projection, $\|f-K_{\boldsymbol{J}_n(\boldsymbol{\alpha})}(f_0)\|_n\leq\|f-f_0\|_n$. Then by the triangle inequality, $\|f-f_0\|_n\leq\|f-K_{\boldsymbol{J}_n(\boldsymbol{\alpha})}(f_0)\|_n+\|K_{\boldsymbol{J}_n(\boldsymbol{\alpha})}(f_0)-f_0\|_\infty\lesssim \|f-K_{\boldsymbol{J}_n(\boldsymbol{\alpha})}(f_0)\|_n+\sum_{l=1}^d2^{-\alpha_lJ_{n,l}(\boldsymbol{\alpha})}$, where the last inequality follows from Proposition \ref{prop:wavelet} since $f_0\in\mathcal{B}^{\boldsymbol{\alpha}}_{\infty,\infty}(R)$. In view of Lemma \ref{lem:2phiphi}, $\|f-K_{\boldsymbol{J}_n(\boldsymbol{\alpha})}(f_0)\|_n^2\asymp\|\boldsymbol{\vartheta}-\boldsymbol{\vartheta}_0\|^2+\|\widetilde{\boldsymbol{\theta}}-\widetilde{\boldsymbol{\theta}}_0\|^2$, since $2^{\sum_{l=1}^dJ_{n,l}(\boldsymbol{\alpha})}=o(n)$. Here the tilde in $\widetilde{\boldsymbol{\theta}}$ represents the truncated mother wavelet coefficients. We then conclude that there are constants $W_1,W_2>0$ such that for $f\in\mathcal{F}_n$,
\begin{align}\label{eq:coeff}
W_1\|\widetilde{\boldsymbol{\theta}}-\widetilde{\boldsymbol{\theta}}_0\|&\leq\|f-f_0\|_n\nonumber\\
&\leq W_2\left[\|\boldsymbol{\vartheta}-\boldsymbol{\vartheta}_0\|+\|\widetilde{\boldsymbol{\theta}}-\widetilde{\boldsymbol{\theta}}_0\|+(\log{n}/n)^{\alpha^{*}/(2\alpha^{*}+d)}\right]
\end{align}
by the definition of $J_{n,l}(\boldsymbol{\alpha})$. Let us define sieve slices $\mathcal{F}_n^j=\{f\in\mathcal{F}_n:j\epsilon_n<\|f-f_0\|_n+|\sigma^2-\sigma_0^2|\leq(j+1)\epsilon_n\}$ for any integer $j\geq M$. It follows from \eqref{eq:coeff} above that
\begin{align*}
\mathcal{F}_n^j\subset\left\{\|\widetilde{\boldsymbol{\theta}}-\widetilde{\boldsymbol{\theta}}_0\|\leq(2/W_1)j\epsilon_n,\text{ }|\sigma^2-\sigma_0^2|\leq2j\epsilon_n\right\}.
\end{align*}
By calculating the covering number of the Euclidean space on the right hand side, we conclude that $\mathcal{F}_n^j$ has a $\epsilon_n$-net of at most $e^{Cjn\epsilon_n^2}$ points for some constant $C>0$ if $2^{\sum_{l=1}^dJ_{n,l}(\boldsymbol{\alpha})}\lesssim n\epsilon_n^2$. Then by Lemma 1 of \citep{test}, there exists a test $\phi_{n,j}$ with exponentially small error probabilities for testing $f=f_0$ against $f\in\mathcal{F}_n^j$, by maximizing over 3 tests corresponding to the cases where $|\sigma^2-\sigma_0^2|\leq\sigma_0^2/2,\sigma^2>3\sigma_0^2/2$ and $\sigma^2<\sigma_0^2/2$. Then using the arguments outlined in the proof of Theorem 9 in \cite{converge2007}, we conclude that $\phi_n=\sup_{j\geq M}\phi_{n,j}$ is a test with exponentially small Type I and II errors, thus fulfilling the testing requirement of the master theorem.

To characterize prior concentration, let $K(p,q):=\int p\log{(p/q)}d\mu$ be the Kullback-Leibler divergence and $V(p,q):=\int p[\log{(p/q)}-K(p,q)]^2d\mu$, where $\mu$ is the Lesbegue measure. Define the Kullback-Leibler neighborhood $B_n(\epsilon_n):=\{(f,\sigma^2): n^{-1}\sum_{i=1}^nK(p_{f_0,i},p_{f,i})\leq\epsilon_n^2,\text{ }n^{-1}\sum_{i=1}^nV(p_{f_0,i},p_{f,i})\leq\epsilon_n^2\}$ with $p_{g,i}$ being the density of $\mathrm{N}[g(\boldsymbol{X}_i),\sigma^2]$. After some calculations,
\begin{align*}
\frac{1}{n}\sum_{i=1}^nK(p_{f_0,i},p_{f,i})&=\frac{1}{2}\log{\left(\frac{\sigma^2}{\sigma_0^2}\right)}-\frac{1}{2}\left(1-\frac{\sigma_0^2}{\sigma^2}\right)
+\frac{\|f-f_0\|_n^2}{2\sigma^2},\\
\frac{1}{n}\sum_{i=1}^nV(p_{f_0,i},p_{f,i})&=\frac{1}{2}\left(1-\frac{\sigma_0^2}{\sigma^2}\right)^2
+\frac{\sigma_0^2\|f-f_0\|_n^2}{\sigma^4}.
\end{align*}
Hence, there are constants $W_3,\widetilde{W}_3>0$ such that $B_n(\epsilon_n)\supset\{\|f-f_0\|_n\leq W_3\epsilon_n,|\sigma^2-\sigma_0^2|\leq \widetilde{W}_3\epsilon_n\}$. By \eqref{eq:coeff}, take $\epsilon_n\geq(3W_2/W_3)(\log{n}/n)^{\alpha^{*}/(2\alpha^{*}+d)}$ and we have $B_n(\epsilon_n)\supset\{\|\boldsymbol{\vartheta}-\boldsymbol{\vartheta}_0\|\leq W_3\epsilon_n/(3W_2),\|\widetilde{\boldsymbol{\theta}}-\widetilde{\boldsymbol{\theta}}_0\|\leq W_3\epsilon_n/(3W_2),|\sigma^2-\sigma_0^2|\leq \widetilde{W}_3\epsilon_n\}$. Therefore by the assumed independence of the priors, $\Pi[B_n(\epsilon_n)]$ can be lower bounded by
\begin{align}\label{eq:priorbound}
\Pi\left[\|\boldsymbol{\vartheta}-\boldsymbol{\vartheta}_0\|\leq\frac{W_3}{3W_2}\epsilon_n\right]\Pi\left[\|\widetilde{\boldsymbol{\theta}}-\widetilde{\boldsymbol{\theta}}_0\|\leq \frac{W_3}{3W_2}\epsilon_n\right]\Pi\left(|\sigma^2-\sigma_0^2|\leq \widetilde{W}_3\epsilon_n\right).
\end{align}
Since $\pi_{\sigma}$ is continuous and $\pi_{\sigma}(\cdot)>0$ by assumption, we have
\begin{align*}
\Pi(|\sigma^2-\sigma_0^2|\leq \widetilde{W}_3\epsilon_n)\geq2\widetilde{W}_3\epsilon_n\inf_{|u-\sigma_0^2|\leq\widetilde{W_3}\epsilon_n}\pi_{\sigma}(u)=2\widetilde{W}_3\epsilon_n\pi_{\sigma}(\sigma_0^2)[1+o(1)],
\end{align*}
which is greater than $e^{-H_1\log{n}}$ for some constant $H_1>0$, where the last equality follows since $\epsilon_n\gtrsim n^{-1/2}$ by assumption. For a set $\mathcal{A}$ in some Euclidean space, we denote $\mathrm{vol}(\mathcal{A})$ to be the volume of $\mathcal{A}$. Let $\widetilde{N}=\prod_{l=1}^d2^{N_l}$. The first prior factor in \eqref{eq:priorbound} is
\begin{align*}
\Pi\left[\|\boldsymbol{\vartheta}-\boldsymbol{\vartheta}_0\|\leq\frac{W_3}{3W_2}\epsilon_n\right]&=\int_{\|\boldsymbol{\vartheta}-\boldsymbol{\vartheta}_0\|\leq W_3\epsilon_n/(3W_2)}\prod_{m_1=0}^{2^{N_1}-1}\cdots\prod_{m_d=0}^{2^{N_d}-1}p(\vartheta_{\boldsymbol{m}})d\vartheta_{\boldsymbol{m}}\\
&\geq p_{\mathrm{min}}^{\widetilde{N}}\mathrm{vol}\{\|\boldsymbol{\vartheta}-\boldsymbol{\vartheta}_0\|\leq W_3\epsilon_n/(3W_2),\|\boldsymbol{\vartheta}\|_\infty\leq R_0\}\\
&=\left(\frac{p_{\mathrm{min}}W_3}{3W_2}\epsilon_n\right)^{\widetilde{N}}\frac{\pi^{\widetilde{N}/2}}{\Gamma(\widetilde{N}/2+1)}\geq e^{-H_2\log{n}},
\end{align*}
for some constant $H_2>0$. We lower bound the second factor in \eqref{eq:priorbound} by
\begin{align*}
\Pi\left[\|\widetilde{\boldsymbol{\theta}}-\widetilde{\boldsymbol{\theta}}_0\|\leq W_3\epsilon_n/(3W_2)\right]\geq\Pi\left[\|\widetilde{\boldsymbol{\theta}}-\widetilde{\boldsymbol{\theta}}_0\|\leq W_3\epsilon_n/(3W_2)\middle|\widetilde{\mathcal{P}}_n\right]\Pi(\widetilde{\mathcal{P}}_n),
\end{align*}
where $\widetilde{\mathcal{P}}_n=\{(\boldsymbol{j},\boldsymbol{k}):\theta_{\boldsymbol{j},\boldsymbol{k}}\neq0, j_l<J_{n,l}(\boldsymbol{\alpha})\text{ for all }l=1,\dotsc,d\text{ and }\theta_{\boldsymbol{j},\boldsymbol{k}}=0\text{ for some }l=1,\dotsc,d, J_{n,l}(\boldsymbol{\alpha})\leq j_l\leq J_{n,l}-1\text{, with }0\leq k_l\leq2^{j_l}-1\}$. Denote $\mathcal{K}_n(\boldsymbol{\alpha})=\{(j_1,\dotsc,j_d):N_l\leq j_l\leq J_{n,l}(\boldsymbol{\alpha})-1,l=1,\dotsc,d\}$. Recall that $n^{-\lambda}\leq\omega_{\boldsymbol{j},n}\leq\min\{\prod_{l=1}^d2^{-j_l(1+\mu_l)},1/2\}$. Using the fact that $\log{(1-x)}\geq-(2\log{2})x$ for $0\leq x\leq0.5$, we have $\log{\Pi(\widetilde{\mathcal{P}}_n)}$ is
\begin{align}
&\sum_{j_1=N_1}^{J_{n,1}(\boldsymbol{\alpha})-1}\cdots\sum_{j_d=N_d}^{J_{n,d}(\boldsymbol{\alpha})-1}
2^{\sum_{l=1}^dj_l}\log{\omega_{\boldsymbol{j},n}}+\sum_{\boldsymbol{j}\in\mathcal{K}_n(\boldsymbol{\alpha})^c}2^{\sum_{l=1}^dj_l}
\log{(1-\omega_{\boldsymbol{j},n})}\nonumber\\
&\geq-\lambda\log{n}\prod_{l=1}^d\sum_{j_l=N_l}^{J_{n,l}(\boldsymbol{\alpha})-1}2^{j_l}-2\log{2}\sum_{\boldsymbol{j}\in\mathcal{K}_n(\boldsymbol{\alpha})^c}
2^{\sum_{l=1}^dj_l}\omega_{\boldsymbol{j},n}.\label{eq:pi}
\end{align}
Define sets $\mathcal{Q}_l,l=1,\dotsc,d$ where $\mathcal{Q}_l$ can be $\{j_l<J_{n,l}(\boldsymbol{\alpha})\}$ or $\{j_l\geq J_{n,l}(\boldsymbol{\alpha})$\}, but with the constraint that not all $\mathcal{Q}_l$'s are $\{j_l<J_{n,l}(\boldsymbol{\alpha})\}$. Then the summation over $\boldsymbol{j}\in\mathcal{K}_n(\boldsymbol{\alpha})^c$ is such that $\boldsymbol{j}$ takes on all $2^d-1$ possible combinations of the $\mathcal{Q}_l$'s, and each combination has the form
\begin{align*}
\sum_{j_1\in\mathcal{Q}_1}\cdots\sum_{j_d\in\mathcal{Q}_d}2^{\sum_{l=1}^dj_l}\omega_{\boldsymbol{j},n}
\leq\sum_{j_1\in\mathcal{Q}_1}\cdots\sum_{j_d\in\mathcal{Q}_d}2^{-\sum_{l=1}^dj_l\mu_l}.
\end{align*}
Among these $2^d-1$ combinations, the configuration with one $\mathcal{Q}_i=\{j_i\geq J_{n,i}(\boldsymbol{\alpha})\}$ and the rest $\mathcal{Q}_l=\{j_l<J_{n,l}(\boldsymbol{\alpha})\},l\neq i,l=1,\dotsc,d$ will dominate the sum, and they are exactly $d$ such configurations. Thus, the sum over $\boldsymbol{j}\in\mathcal{K}_n(\boldsymbol{\alpha})^c$ in \eqref{eq:pi} is bounded above up to some universal constant by
\begin{align*}
\sum_{i=1}^d\sum_{j_i\geq J_{n,i}(\boldsymbol{\alpha})}2^{-j_i\mu_i}\prod_{l\neq i}^d\sum_{j_l<J_{n,l}(\boldsymbol{\alpha})}2^{-j_l\mu_l}\lesssim\sum_{i=1}^d2^{-J_{n,i}(\boldsymbol{\alpha})/2},
\end{align*}
since $\mu_l>1/2,l=1,\dotsc,d$. Hence, \eqref{eq:pi} is bounded below up to some constant multiple by $-\log{n}2^{\sum_{l=1}^dJ_{n,l}(\boldsymbol{\alpha})}-\sum_{l=1}^d2^{J_{n,l}(\boldsymbol{\alpha})/2}$. We then conclude that $\Pi(\widetilde{\mathcal{P}}_n)\geq e^{-H_3\log{n}2^{\sum_{l=1}^dJ_{n,l}(\boldsymbol{\alpha})}}$ for some constant $H_3>0$.

By the assumption in \eqref{eq:density} and denoting $\widetilde{J}=\prod_{l=1}^d[2^{J_{n,l}(\boldsymbol{\alpha})}-2^{N_l}]$,
\begin{align*}
\Pi\left[\|\widetilde{\boldsymbol{\theta}}-\widetilde{\boldsymbol{\theta}}_0\|\leq\frac{W_3}{3W_2}\epsilon_n\middle|\widetilde{\mathcal{P}}_n\right]&=\int_{\|\widetilde{\boldsymbol{\theta}}-\widetilde{\boldsymbol{\theta}}_0\|\leq W_3\epsilon_n/(3W_2)}\prod_{l=1}^d\prod_{j_l=N_l}^{J_{n,l}(\boldsymbol{\alpha})-1}\prod_{k_l=0}^{2^{j_l}-1}p(\theta_{\boldsymbol{j},\boldsymbol{k}})d\theta_{\boldsymbol{j},\boldsymbol{k}}\\
&\geq p_{\mathrm{min}}^{\widetilde{J}}\mathrm{vol}\{\widetilde{\boldsymbol{\theta}}\in\widetilde{\mathcal{P}}_n:\|\widetilde{\boldsymbol{\theta}}-\widetilde{\boldsymbol{\theta}}_0\|\leq W_3\epsilon_n/(3W_2), \|\widetilde{\boldsymbol{\theta}}\|_\infty\leq R_0\}\\
&=\left(\frac{p_{\mathrm{min}}W_3}{3W_2}\epsilon_n\right)^{\widetilde{J}}\frac{\pi^{\widetilde{J}/2}}{\Gamma{(\widetilde{J}/2+1)}}\geq e^{-H_4\log{n}2^{\sum_{l=1}^dJ_{n,l}(\boldsymbol{\alpha})}},
\end{align*}
for some constant $H_4>0$. Therefore by multiplying all the lower bounds obtained for \eqref{eq:priorbound}, it follows that $\Pi[B_n(\epsilon_n)]\geq e^{-Cn\epsilon_n^2}$ for some constant $C>0$ only if $(\log{n}/n)2^{\sum_{l=1}^dJ_{n,l}(\boldsymbol{\alpha})}\lesssim\epsilon_n^2$. This implies that $\epsilon_n\gtrsim(\log{n}/n)^{\alpha^{*}/(2\alpha^{*}+d)}$ for $2^{J_{n,l}(\boldsymbol{\alpha})}\asymp(n/\log{n})^{\alpha^{*}/\{\alpha_l(2\alpha^{*}+d)\}},l=1,\dotsc,d$.

It now remains to show that $\mathrm{E}_0\Pi(\mathcal{F}_n^c|\boldsymbol{Y})\rightarrow0$. By continuous embedding, this is equivalent to showing that $\mathrm{E}_0\Pi(\Theta_n^c|\boldsymbol{Y})\rightarrow0$. Observe that $\Theta_n^c=\bigcup_{\boldsymbol{j}\geq\boldsymbol{J}_n(\boldsymbol{\alpha})}[\boldsymbol{\theta}_{(\boldsymbol{j})}\neq\boldsymbol{0}]$, with $[\boldsymbol{\theta}_{(\boldsymbol{j})}\neq\boldsymbol{0}]$ representing the set such that $\theta_{\boldsymbol{j},\boldsymbol{k}}\neq0$ for at least one $k_l$ at some $l=1,\dotsc,d$. Define $\mathcal{A}_{\boldsymbol{j}}(m)=\{\boldsymbol{\theta}_{\boldsymbol{j}}:\text{exactly $m$ among all $2^{\sum_{l=1}^dj_l}$ elements are not zero, and the rest are zeroes}\}$. It follows that $[\boldsymbol{\theta}_{(\boldsymbol{j})}\neq\boldsymbol{0}]$ is a union of $\mathcal{A}_{\boldsymbol{j}}(m)$ across $m=1,\dotsc,2^{\sum_{l=1}^dj_l}$ and we have $\Pi(\Theta_n^c|\boldsymbol{Y})\leq\sum_{\boldsymbol{j}\geq\boldsymbol{J}_n(\boldsymbol{\alpha})}\left[\Pi(\mathcal{A}_{\boldsymbol{j}}(1)|\boldsymbol{Y})+\cdots+\Pi\left(\mathcal{A}_{\boldsymbol{j}}\left(2^{\sum_{l=1}^dj_l}\right)\middle|\boldsymbol{Y}\right)\right]$. After some calculations, it turns out that the first sum is $O_{P_0}(e^{-C\log{n}})$ while the rest of the terms are $o_{P_0}(e^{-C\log{n}})$ for some large enough constant $C>0$. We then conclude that $\mathrm{E}_0\Pi(\mathcal{F}_n^c|\boldsymbol{Y})\lesssim e^{-C\log{n}}\rightarrow0$ as $n\rightarrow\infty$.
\end{proof}

\begin{corollary}\label{cor:l2contract}
As a consequence of Lemma \ref{lem:l2contract} above, we have with posterior probability at least $1-n^{-P_4}$ that
\begin{align*}
\|\boldsymbol{\vartheta}-\boldsymbol{\vartheta}_0\|\lesssim(n/\log{n})^{-\alpha^{*}/(2\alpha^{*}+d)},\quad\|\boldsymbol{\theta}-\boldsymbol{\theta}_0\|\lesssim(n/\log{n})^{-\alpha^{*}/(2\alpha^{*}+d)}.
\end{align*}
\end{corollary}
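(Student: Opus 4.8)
The plan is to convert the empirical $L_2$-contraction of Lemma \ref{lem:l2contract} into bounds on the coefficient vectors by exploiting the near-orthogonality of the wavelet design. Besides the contraction statement itself, the proof of Lemma \ref{lem:l2contract} established $\mathrm{E}_0\Pi(\mathcal{F}_n^c|\boldsymbol{Y})\lesssim e^{-C\log n}$, where $\mathcal{F}_n$ consists of functions whose mother coefficients vanish at all levels with $j_l\geq J_{n,l}(\boldsymbol{\alpha})$ for some $l$, with $2^{J_{n,l}(\boldsymbol{\alpha})}\asymp(n/\log n)^{\alpha^{*}/\{\alpha_l(2\alpha^{*}+d)\}}$. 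Writing $\epsilon_n:=(n/\log n)^{-\alpha^{*}/(2\alpha^{*}+d)}$, Lemma \ref{lem:l2contract} in particular gives $\mathrm{E}_0\Pi(\|f-f_0\|_n>M\epsilon_n|\boldsymbol{Y})\leq n^{-P_4}$, so the event $\mathcal{A}_n:=\mathcal{F}_n\cap\{\|f-f_0\|_n\leq M\epsilon_n\}$ carries posterior probability at least $1-n^{-P_4}$ (in the $\mathrm{E}_0\Pi$ sense, shrinking $P_4$ to absorb the $e^{-C\log n}$ term). It then suffices to bound $\|\boldsymbol{\vartheta}-\boldsymbol{\vartheta}_0\|$ and $\|\boldsymbol{\theta}-\boldsymbol{\theta}_0\|$ on $\mathcal{A}_n$.

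On $\mathcal{F}_n$ both $f$ and $K_{\boldsymbol{J}_n(\boldsymbol{\alpha})}(f_0)$ lie in the wavelet span at resolution $\boldsymbol{J}_n(\boldsymbol{\alpha})$, and $2^{\sum_{l=1}^dJ_{n,l}(\boldsymbol{\alpha})}=(n/\log n)^{d/(2\alpha^{*}+d)}=o(n)$ since $\alpha^{*}>d/2$ on $\mathbb{A}$; thus Lemma \ref{lem:2phiphi} yields the norm equivalence $\|\boldsymbol{\vartheta}-\boldsymbol{\vartheta}_0\|^2+\|\widetilde{\boldsymbol{\theta}}-\widetilde{\boldsymbol{\theta}}_0\|^2\asymp\|f-K_{\boldsymbol{J}_n(\boldsymbol{\alpha})}(f_0)\|_n^2$, the tilde denoting truncation at $\boldsymbol{J}_n(\boldsymbol{\alpha})$. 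By the triangle inequality and $\|\cdot\|_n\leq\|\cdot\|_\infty$,
\[
\|f-K_{\boldsymbol{J}_n(\boldsymbol{\alpha})}(f_0)\|_n\leq\|f-f_0\|_n+\|f_0-K_{\boldsymbol{J}_n(\boldsymbol{\alpha})}(f_0)\|_\infty\lesssim\epsilon_n,
\]
using $\|f-f_0\|_n\leq M\epsilon_n$ on $\mathcal{A}_n$ and Proposition \ref{prop:wavelet} (with $p=q=\infty$ and $f_0\in\mathcal{B}^{\boldsymbol{\alpha}}_{\infty,\infty}(R)$), which bounds the last term by a constant times $\sum_{l=1}^d2^{-\alpha_lJ_{n,l}(\boldsymbol{\alpha})}\asymp\epsilon_n$. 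Combining, $\|\boldsymbol{\vartheta}-\boldsymbol{\vartheta}_0\|\lesssim\epsilon_n$ and $\|\widetilde{\boldsymbol{\theta}}-\widetilde{\boldsymbol{\theta}}_0\|\lesssim\epsilon_n$.

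It remains to account for the mother coefficients at levels with $j_l\geq J_{n,l}(\boldsymbol{\alpha})$ (but still $j_l<J_{n,l}$). On $\mathcal{F}_n$ the posterior draw has all of these equal to $0$, so $\|\boldsymbol{\theta}-\boldsymbol{\theta}_0\|^2=\|\widetilde{\boldsymbol{\theta}}-\widetilde{\boldsymbol{\theta}}_0\|^2+\sum_{\boldsymbol{j}\notin\mathcal{K}_n(\boldsymbol{\alpha})}\|\boldsymbol{\theta}_{\boldsymbol{j}}^0\|^2$, and by Parseval the residual sum is at most $\|f_0-K_{\boldsymbol{J}_n(\boldsymbol{\alpha})}(f_0)\|_2^2\leq\|f_0-K_{\boldsymbol{J}_n(\boldsymbol{\alpha})}(f_0)\|_\infty^2\lesssim\epsilon_n^2$ by Proposition \ref{prop:wavelet} again. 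Adding the two pieces gives $\|\boldsymbol{\theta}-\boldsymbol{\theta}_0\|\lesssim\epsilon_n$, completing the argument.

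I expect no serious obstacle here: the only point requiring care is the norm equivalence supplied by Lemma \ref{lem:2phiphi}, which rests on the number of active basis functions being $o(n)$ — this is exactly why the decomposition is made at the adapted resolution $\boldsymbol{J}_n(\boldsymbol{\alpha})$ rather than at the prior cut-off $\boldsymbol{J}_n$, and it holds over all of $\mathbb{A}$ because $\alpha^{*}>d/2$ there. The rest is elementary bookkeeping: intersecting the contraction event with $\{f\in\mathcal{F}_n\}$, and using $\|\cdot\|_n\leq\|\cdot\|_\infty$ together with Parseval's identity for the orthonormal wavelet basis.
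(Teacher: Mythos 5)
Your argument is correct, but it is considerably more elaborate than the paper's, and the reasoning you give for why the more elaborate route is necessary rests on a misconception. The paper's proof is a one-liner: under the prior of \eqref{eq:prior}, every posterior draw $f$ already lives in the wavelet span at the prior resolution $\boldsymbol{J}_n$, where by construction $2^{\sum_{l=1}^d J_{n,l}}=\sqrt{n/\log n}=o(n)$ \emph{regardless} of $\boldsymbol{\alpha}$. So Lemma \ref{lem:2phiphi} (invoked via the Gram-matrix bounds of Lemma \ref{lem:phiphi}) applies directly at resolution $\boldsymbol{J}_n$, giving $\|f-f_0\|_n\geq\|f-K_{\boldsymbol{J}_n}(f_0)\|_n\gtrsim(\|\boldsymbol{\vartheta}-\boldsymbol{\vartheta}_0\|^2+\|\boldsymbol{\theta}-\boldsymbol{\theta}_0\|^2)^{1/2}$, and Lemma \ref{lem:l2contract} finishes. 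There is no need to intersect with the sieve $\mathcal{F}_n$, split coefficients at $\boldsymbol{J}_n(\boldsymbol{\alpha})$, or invoke Parseval plus Proposition \ref{prop:wavelet} to treat the tail $\{\boldsymbol{j}:\boldsymbol{J}_n(\boldsymbol{\alpha})\leq\boldsymbol{j}<\boldsymbol{J}_n\}$ separately. Your remark that ``the decomposition is made at the adapted resolution $\boldsymbol{J}_n(\boldsymbol{\alpha})$ rather than $\boldsymbol{J}_n$\dots because $\alpha^*>d/2$'' confuses two different conditions: $(n/\log n)^{d/(2\alpha^*+d)}=o(n)$ holds for \emph{any} $\alpha^*>0$, and $2^{\sum_l J_{n,l}}=\sqrt{n/\log n}=o(n)$ holds by construction; what $\alpha^*\geq d/2$ actually buys is the nesting $J_{n,l}(\boldsymbol{\alpha})\leq J_{n,l}$ needed for the sieve $\mathcal{F}_n$ to sit inside the prior's support. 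Working at $\boldsymbol{J}_n$ sidesteps this entirely and preserves the stated generality of Lemma \ref{lem:l2contract} (``for any $0<\alpha_l<\eta+1$''), which your restriction to $\mathbb{A}$ would forfeit. Your extra bookkeeping (sieve intersection and tail control) is not wrong, but it spends effort re-deriving facts the prior's $\sqrt{n/\log n}$ cut-off was designed to provide for free.
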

\begin{proof}
If $f$ has wavelet expansion as in \eqref{eq:prior} at resolution $\boldsymbol{J}_n$, then by the property of $L_2$-projection and Lemma \ref{lem:phiphi}, we have $\|f-f_0\|_n\geq\|f-K_{\boldsymbol{J}_n}(f_0)\|_n\gtrsim(\|\boldsymbol{\vartheta}-\boldsymbol{\vartheta}_0\|^2+\|\boldsymbol{\theta}-\boldsymbol{\theta}_0\|^2)^{1/2}$ since $2^{\sum_{l=1}^dJ_{n,l}}=o(n)$ by assumption. The result follows by applying Lemma \ref{lem:l2contract}.
\end{proof}

\begin{lemma}\label{lem:type2}
Project $f$ unto the wavelet bases at resolution $\boldsymbol{J}_n$ and write the regression model in \eqref{eq:model} (assuming known $\sigma=\sigma_0$) as $\boldsymbol{Y}=\boldsymbol{\Psi\theta}+\boldsymbol{\varepsilon}$ with $\boldsymbol{\Psi}$ the wavelet basis matrix and $\boldsymbol{\varepsilon}\sim\mathrm{N}(\boldsymbol{0},\sigma_0^2\boldsymbol{I})$. Let $\widehat{f}_n(\boldsymbol{x}):=\boldsymbol{\psi}_{\boldsymbol{J}_n}(\boldsymbol{x})^T(\boldsymbol{\Psi}^T\boldsymbol{\Psi})^{-1}\boldsymbol{\Psi}^T\boldsymbol{Y}$ be the corresponding least squares estimator with $\boldsymbol{\psi}_{\boldsymbol{J}_n}(\boldsymbol{x})$ being the vector of all wavelet functions at resolution $\boldsymbol{J}_n$ evaluated at $\boldsymbol{x}$. Let $2^{\sum_{l=1}^dJ_{n,l}}=o(n)$, then for any $f\in\mathcal{B}^{\boldsymbol{\alpha}}_{\infty,\infty}(R)$, there exist constants $C,Q_1,Q_2>0$ such that the following hold:
\begin{gather}
\|\mathrm{E}_f\widehat{f}_n-f\|_\infty\leq C\sum_{l=1}^d2^{-\alpha_lJ_{n,l}},\label{eq:type21}\\
P_f\left(\|\widehat{f}_n-\mathrm{E}_f\widehat{f}_n\|_\infty\geq Q_1\sqrt{\frac{2^{\sum_{l=1}^dJ_{n,l}}\log{n}}{n}}+\sqrt{2Q_2\frac{2^{\sum_{l=1}^dJ_{n,l}}}{n}x}\right)\leq e^{-x}.\label{eq:type22}
\end{gather}
\end{lemma}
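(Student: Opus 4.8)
The plan is to handle the two displays separately: reduce the deterministic bound \eqref{eq:type21} to the wavelet approximation estimate of Proposition \ref{prop:wavelet} plus a negligible ``empirical versus continuous projection'' remainder, and reduce the stochastic bound \eqref{eq:type22} to a Gaussian maximal inequality combined with Borell's concentration inequality.

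For \eqref{eq:type21}, write $\boldsymbol F=(f(\boldsymbol X_1),\dots,f(\boldsymbol X_n))^T$ and let $\boldsymbol\theta^0$ be the coefficient vector of the wavelet projection $K_{\boldsymbol J_n}(f)$, so that $\boldsymbol F=\boldsymbol\Psi\boldsymbol\theta^0+\boldsymbol\xi$ with $\boldsymbol\xi$ the vector of values of $f-K_{\boldsymbol J_n}(f)$ at the design points. Then $\mathrm{E}_f\widehat f_n=K_{\boldsymbol J_n}(f)+r_n$ where $r_n(\boldsymbol x)=\boldsymbol\psi_{\boldsymbol J_n}(\boldsymbol x)^T(\boldsymbol\Psi^T\boldsymbol\Psi)^{-1}\boldsymbol\Psi^T\boldsymbol\xi$, hence $\|\mathrm{E}_f\widehat f_n-f\|_\infty\le\|K_{\boldsymbol J_n}(f)-f\|_\infty+\|r_n\|_\infty$. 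The first term is $\lesssim\sum_{l=1}^d2^{-\alpha_lJ_{n,l}}$ by Proposition \ref{prop:wavelet} applied with $p=\infty$, since $f\in\mathcal B^{\boldsymbol\alpha}_{\infty,\infty}(R)$. For the remainder $r_n$, the key point is that $f-K_{\boldsymbol J_n}(f)$ is $L_2$-orthogonal to every $\psi_{\boldsymbol j,\boldsymbol k}$ indexing $\boldsymbol\Psi$, so each entry of $\boldsymbol\Psi^T\boldsymbol\xi$ is $n$ times a pure discretization error $\int\psi_{\boldsymbol j,\boldsymbol k}(f-K_{\boldsymbol J_n}(f))\,d(G_n-U)$, the leading $n\langle\psi_{\boldsymbol j,\boldsymbol k},\cdot\rangle$ contribution having cancelled; expanding $f-K_{\boldsymbol J_n}(f)$ in wavelets and invoking the near-orthogonality estimates \eqref{eq:phiphi1} of Lemma \ref{lem:phiphi} together with the Besov decay \eqref{eq:infbesov} and the design bound \eqref{eq:cdf} controls these entries without the factor $n$. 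Since $\boldsymbol\Psi^T\boldsymbol\Psi\asymp n\boldsymbol I$ by Lemma \ref{lem:phiphi} (and Lemma \ref{lem:2phiphi}) as $2^{\sum_{l=1}^dJ_{n,l}}=o(n)$, inverting and returning to the sup-norm via $\|\sum_{\boldsymbol k}|\psi_{\boldsymbol j,\boldsymbol k}|\|_\infty\lesssim\prod_{l=1}^d2^{j_l/2}$ shows $\|r_n\|_\infty$ is of strictly smaller order than $\sum_{l=1}^d2^{-\alpha_lJ_{n,l}}$, which yields \eqref{eq:type21}.

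For \eqref{eq:type22}, set $G_n(\boldsymbol x):=\widehat f_n(\boldsymbol x)-\mathrm{E}_f\widehat f_n(\boldsymbol x)=\boldsymbol\psi_{\boldsymbol J_n}(\boldsymbol x)^T(\boldsymbol\Psi^T\boldsymbol\Psi)^{-1}\boldsymbol\Psi^T\boldsymbol\varepsilon$, a centred, a.s.\ continuous Gaussian process on $[0,1]^d$ whose law does not depend on $f$. I would (i) bound its pointwise variance, $\sup_{\boldsymbol x}\mathrm{Var}\,G_n(\boldsymbol x)\le\sigma_0^2\lambda_{\max}((\boldsymbol\Psi^T\boldsymbol\Psi)^{-1})\sup_{\boldsymbol x}\|\boldsymbol\psi_{\boldsymbol J_n}(\boldsymbol x)\|^2\lesssim 2^{\sum_{l=1}^dJ_{n,l}}/n$, using Lemma \ref{lem:phiphi}/\ref{lem:2phiphi} and $\|\boldsymbol\psi_{\boldsymbol J_n}(\boldsymbol x)\|^2=\sum_{\boldsymbol j,\boldsymbol k}\psi_{\boldsymbol j,\boldsymbol k}(\boldsymbol x)^2\lesssim 2^{\sum_{l=1}^dJ_{n,l}}$, which fixes $Q_2$; (ii) bound the expected supremum by $\mathrm{E}\|G_n\|_\infty\le\sum_{\boldsymbol j}\|\sum_{\boldsymbol k}|\psi_{\boldsymbol j,\boldsymbol k}|\|_\infty\,\mathrm{E}\max_{\boldsymbol k}|Z_{\boldsymbol j,\boldsymbol k}|\lesssim\sum_{\boldsymbol j}\prod_{l=1}^d2^{j_l/2}\sqrt{(\sum_{l=1}^dj_l)/n}\lesssim\sqrt{2^{\sum_{l=1}^dJ_{n,l}}\log n/n}$, where $Z_{\boldsymbol j,\boldsymbol k}:=((\boldsymbol\Psi^T\boldsymbol\Psi)^{-1}\boldsymbol\Psi^T\boldsymbol\varepsilon)_{\boldsymbol j,\boldsymbol k}$ has variance $\lesssim\sigma_0^2/n$ and one applies the Gaussian maximal inequality at each of the $\prod_{l=1}^d(J_{n,l}-N_l)=O((\log n)^d)$ levels, which fixes $Q_1$; and (iii) apply Borell's Gaussian concentration inequality to $G_n$ and to $-G_n$, namely $P(\sup_{\boldsymbol x}\pm G_n(\boldsymbol x)\ge\mathrm{E}\sup_{\boldsymbol x}\pm G_n+t)\le e^{-t^2/(2\sup_{\boldsymbol x}\mathrm{Var}\,G_n(\boldsymbol x))}$, taking $t=\sqrt{2Q_2\,2^{\sum_{l=1}^dJ_{n,l}}x/n}$ and combining with (i)--(ii) (the absolute value and the two-sided bound affecting only constants). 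The step I expect to be the main obstacle is the sup-norm bookkeeping: for \eqref{eq:type22}, arguing that $\mathrm{E}\|G_n\|_\infty$ is genuinely dominated by the level-wise Gaussian maxima, so that no full chaining over $[0,1]^d$ is required (this rests on the compact support and uniform boundedness of the CDV wavelets); and, for \eqref{eq:type21}, ensuring that $\|r_n\|_\infty$ does not inherit a spurious factor $2^{\sum_{l=1}^dJ_{n,l}/2}$, which is precisely where the cancellation of the continuous inner products and the $O(n^{-1})$ design error in \eqref{eq:cdf} must be used in tandem.
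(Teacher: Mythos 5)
Your proof of the tail bound \eqref{eq:type22} is essentially the paper's. Both use (i) the pointwise variance bound $\sup_{\boldsymbol x}\mathrm{Var}\,G_n(\boldsymbol x)\lesssim\lambda_{\max}((\boldsymbol\Psi^T\boldsymbol\Psi)^{-1})\sup_{\boldsymbol x}\|\boldsymbol\psi_{\boldsymbol J_n}(\boldsymbol x)\|^2\lesssim 2^{\sum_lJ_{n,l}}/n$, (ii) a Gaussian maximal inequality giving $\mathrm{E}\|G_n\|_\infty\lesssim\sqrt{2^{\sum_lJ_{n,l}}\log n/n}$, and (iii) Borell's inequality. You compute the mean of the supremum by summing over levels $\boldsymbol j$ the quantities $\|\sum_{\boldsymbol k}|\psi_{\boldsymbol j,\boldsymbol k}|\|_\infty\,\mathrm{E}\max_{\boldsymbol k}|Z_{\boldsymbol j,\boldsymbol k}|$, with $Z_{\boldsymbol j,\boldsymbol k}$ the inverted Gaussian coefficients; the paper instead applies a H\"older/operator-norm chain to $\boldsymbol\psi_{\boldsymbol J_n}(\cdot)^T(\boldsymbol\Psi^T\boldsymbol\Psi)^{-1}\boldsymbol\eta$ with $\boldsymbol\eta=\boldsymbol\Psi^T\boldsymbol\varepsilon$ and invokes the maximal inequality on $\boldsymbol\eta$. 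Both are correct (the $Z_{\boldsymbol j,\boldsymbol k}$ are dependent, but the Gaussian maximal inequality does not require independence) and land on the same rate.

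For the bias bound \eqref{eq:type21} you use the same decomposition, $\mathrm{E}_f\widehat f_n=K_{\boldsymbol J_n}(f)+r_n$ with $r_n(\boldsymbol x)=\boldsymbol\psi_{\boldsymbol J_n}(\boldsymbol x)^T(\boldsymbol\Psi^T\boldsymbol\Psi)^{-1}\boldsymbol\Psi^T\boldsymbol\xi$, but your control of $r_n$ departs from the paper's. The paper does not use any cancellation: it bounds $\|r_n\|_\infty$ by the product $\sup_{\boldsymbol x}\|\boldsymbol\psi_{\boldsymbol J_n}(\boldsymbol x)\|_1\cdot\|(\boldsymbol\Psi^T\boldsymbol\Psi)^{-1}\|_{(1,1)}\cdot\|\boldsymbol\Psi^T\|_{(1,1)}\cdot\|\boldsymbol\xi\|_\infty\lesssim (2^{\sum_lJ_{n,l}}/n)\|\boldsymbol\xi\|_\infty$ and then notes that $2^{\sum_lJ_{n,l}}\le n$, so the prefactor is $O(1)$; the potential spurious $2^{\sum_lJ_{n,l}/2}$ is swallowed by the $n^{-1}$ from the inverse. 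You instead exploit the $L_2$-orthogonality of $f-K_{\boldsymbol J_n}(f)$ to the kept $\psi_{\boldsymbol j,\boldsymbol k}$ to argue that each entry of $\boldsymbol\Psi^T\boldsymbol\xi$ is a pure $O(1)$ discretization error. This is a different and more delicate route, and as sketched it does not obviously cover the whole range of interest: the direct integration-by-parts form of Lemma \ref{lem:discrete} requires $\partial^d(\psi_{\boldsymbol j,\boldsymbol k}\xi)/\partial x_1\cdots\partial x_d\in L_1$, which fails for $\xi=f-K_{\boldsymbol J_n}(f)$ when some $\alpha_l<1$; and your wavelet-expansion workaround produces a double series in $(\boldsymbol a,\boldsymbol b)$ whose generic term $|\theta^0_{\boldsymbol a,\boldsymbol b}|\cdot|(\boldsymbol\Psi_{\boldsymbol j}^T\boldsymbol\Psi_{\boldsymbol a})_{\boldsymbol k,\boldsymbol b}|\lesssim 2^{-\sum_l\alpha_l a_l(1/d+1/(2\alpha^*))}\prod_l2^{(a_l+j_l)/2}$, multiplied by the $O(\prod_l 2^{\max(a_l-j_l,0)})$ overlapping $\boldsymbol b$, does not decay geometrically in $a_l$ throughout $\alpha^*>d/2$, so the sum over $\boldsymbol a$ can diverge. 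You would also still have to push the cancelled entries through the non-diagonal $(\boldsymbol\Psi^T\boldsymbol\Psi)^{-1}$, which brings back the same control on row sums of the inverse (the paper's Lemma A.4 citation) that you were trying to avoid. The short version: your instinct about cancellation is clean when $\alpha_l$ are large, but it does not close under the stated hypotheses, whereas the paper's operator-norm factorization works uniformly over $\boldsymbol\alpha$ precisely because $2^{\sum_lJ_{n,l}}\le n$ is assumed.
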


\begin{proof}
By Proposition \ref{prop:wavelet}, for any $f\in\mathcal{B}^{\boldsymbol{\alpha}}_{\infty,\infty}(R)$, there is a $\boldsymbol{\xi}$ such that $\|f-\boldsymbol{\psi}_{\boldsymbol{J}_n}(\cdot)^T\boldsymbol{\xi}\|_\infty\lesssim\sum_{l=1}^d2^{-\alpha_lJ_{n,l}}$. Therefore by adding and subtracting $\boldsymbol{\Psi}\boldsymbol{\xi}$ and using the triangle inequality, $\|\mathrm{E}_f\widehat{f}_n-f\|_\infty$ is
\begin{align*}
&\|\boldsymbol{\psi}_{\boldsymbol{J}_n}(\cdot)^T(\boldsymbol{\Psi}^T\boldsymbol{\Psi})^{-1}\boldsymbol{\Psi}^T\boldsymbol{F}-f\|_\infty\\
&\qquad\leq\|\boldsymbol{\psi}_{\boldsymbol{J}_n}(\cdot)^T(\boldsymbol{\Psi}^T\boldsymbol{\Psi})^{-1}\boldsymbol{\Psi}^T(\boldsymbol{F}-\boldsymbol{\Psi\xi})\|_\infty
+\|\boldsymbol{\psi}_{\boldsymbol{J}_n}(\cdot)^T\boldsymbol{\xi}-f\|_\infty,
\end{align*}
where the second term is $O(\sum_{l=1}^d2^{-\alpha_lJ_{n,l}})$. For a matrix $\boldsymbol{A}$, let $\|\boldsymbol{A}\|_{(\infty,\infty)}=\max_i\sum_j|a_{ij}|$ (max of absolute row sums) and $\|\boldsymbol{A}\|_{(1,1)}=\max_j\sum_i|a_{ij}|$ (max of absolute column sums). Using H\"{o}lder's inequality $|\boldsymbol{x}^T\boldsymbol{y}|\leq\|\boldsymbol{x}\|_1\|\boldsymbol{y}\|_\infty$ and definition of the induced matrix norm $\|\boldsymbol{Ax}\|_1\leq\|\boldsymbol{A}\|_{(1,1)}\|\boldsymbol{x}\|_1$, the first term is bounded by
\begin{align}\label{eq:normbound}
\sup_{\boldsymbol{x}\in[0,1]^d}\|\boldsymbol{\psi}_{\boldsymbol{J}_n}(\boldsymbol{x})\|_1\|(\boldsymbol{\Psi}^T\boldsymbol{\Psi})^{-1}\|_{(1,1)}\|\boldsymbol{\Psi}^T\|_{(1,1)}
\|\boldsymbol{F}-\boldsymbol{\Psi\xi}\|_\infty.
\end{align}
By \eqref{eq:regular} with $\boldsymbol{r}=\boldsymbol{0}$, it holds that $\|\boldsymbol{\psi}_{\boldsymbol{J}_n}(\boldsymbol{x})\|_1\lesssim2^{\sum_{l=1}^dJ_{n,l}/2}$ uniformly in $\boldsymbol{x}\in[0,1]^d$. Note that since each entry of $\boldsymbol{\Psi}$ is a dilated version of the base CDV wavelet with compact support, it follows that $\boldsymbol{\Psi}^T\boldsymbol{\Psi}$ is banded. Furthermore by choosing $2^{\sum_{l=1}^dJ_{n,l}}=o(n)$, all eigenvalues of $\boldsymbol{\Psi}^T\boldsymbol{\Psi}$ are $\asymp n$ by virtue of Lemma \ref{lem:2phiphi}. Therefore by appealing to Lemma A.4 of \citep{yoo2015}, we conclude $\|(\boldsymbol{\Psi}^T\boldsymbol{\Psi})^{-1}\|_{(\infty,\infty)}\lesssim n^{-1}$. Since $(\boldsymbol{\Psi}^T\boldsymbol{\Psi})^{-1}$ is symmetric, it follows that $\|(\boldsymbol{\Psi}^T\boldsymbol{\Psi})^{-1}\|_{(1,1)}=\|(\boldsymbol{\Psi}^T\boldsymbol{\Psi})^{-1}\|_{(\infty,\infty)}\lesssim n^{-1}$. Now observe that $\|\boldsymbol{\Psi}^T\|_{(1,1)}=\max_{1\leq i\leq n}\sum_{\boldsymbol{j},\boldsymbol{k}}|\psi_{\boldsymbol{j},\boldsymbol{k}}(X_i)|\leq\sup_{\boldsymbol{x}\in[0,1]^d}\|\boldsymbol{\psi}_{\boldsymbol{J}_n}(\boldsymbol{x})\|_1$, and this is $O(2^{\sum_{l=1}^dJ_{n,l}/2})$ as shown above. It now remains to bound $\|\boldsymbol{F}-\boldsymbol{\Psi\xi}\|_\infty$, and we know it is $O(\sum_{l=1}^d2^{-\alpha_lJ_{n,l}})$ by Proposition \ref{prop:wavelet}. Combine everything and use the assumption that $2^{\sum_{l=1}^dJ_{n,l}}\leq n$ to conclude \eqref{eq:type21}.

By construction, $\widehat{f}_n-\mathrm{E}_f\widehat{f}_n\sim\mathrm{GP}(0,\sigma_0^2\Sigma_{\boldsymbol{J}_n})$ where the covariance kernel $\Sigma_{\boldsymbol{J}_n}(\boldsymbol{x},\boldsymbol{y})=\boldsymbol{\psi}_{\boldsymbol{J}_n}(\boldsymbol{x})^T(\boldsymbol{\Psi}^T\boldsymbol{\Psi})^{-1}\boldsymbol{\psi}_{\boldsymbol{J}_n}(\boldsymbol{y})$ for any $\boldsymbol{x},\boldsymbol{y}\in[0,1]^d$. Since the wavelets are uniformly bounded and applying \eqref{eq:regular} with $\boldsymbol{r}=\boldsymbol{0}$, we can deduce that for any $\boldsymbol{x}\in[0,1]^d$, $\|\boldsymbol{\psi}_{\boldsymbol{J}_n}(\boldsymbol{x})\|^2\leq\max_{\boldsymbol{j}\leq\boldsymbol{J}_n-\boldsymbol{1}_d,\boldsymbol{k}}|
\psi_{\boldsymbol{j},\boldsymbol{k}}(\boldsymbol{x})|\sum_{\boldsymbol{j}\leq\boldsymbol{J}_n-\boldsymbol{1}_d,\boldsymbol{k}}|\psi_{\boldsymbol{j},\boldsymbol{k}}(\boldsymbol{x})|\lesssim\prod_{l=1}^d2^{J_{n,l}}$. Then by appealing to Lemma \ref{lem:2phiphi},
\begin{align*}
\sup_{\boldsymbol{x}\in[0,1]^d}\Sigma_{\boldsymbol{J}_n}(\boldsymbol{x},\boldsymbol{x})\lesssim\|(\boldsymbol{\Psi}^T\boldsymbol{\Psi})^{-1}\|_{(2,2)}
\sup_{\boldsymbol{x}\in[0,1]^d}\|\boldsymbol{\psi}_{\boldsymbol{J}_n}(\boldsymbol{x})\|^2\leq Q_2n^{-1}2^{\sum_{l=1}^dJ_{n,l}}
\end{align*}
for some constant $Q_2>0$. By the Borell's inequality (see Proposition A.2.1 from \citep{empirical} or Theorem 2.5.8 in \citep{nickl2016}), we have for any $x\geq0$ that
\begin{align}\label{eq:borell}
P_f\left(\|\widehat{f}_n-\mathrm{E}_f\widehat{f}_n\|_\infty\geq\mathrm{E}_f\|\widehat{f}_n-\mathrm{E}_f\widehat{f}_n\|_\infty+\sqrt{2Q_2n^{-1}2^{\sum_{l=1}^dJ_{n,l}}x}\right)\leq e^{-x}.
\end{align}
Define $\boldsymbol{\eta}:=\boldsymbol{\Psi}^T\boldsymbol{\varepsilon}$. Observe that $\widehat{f}_n-\mathrm{E}_f\widehat{f}_n$ is $\boldsymbol{\psi}_{\boldsymbol{J}_n}(\cdot)^T(\boldsymbol{\Psi}^T\boldsymbol{\Psi})^{-1}\boldsymbol{\eta}$, and by H\"{o}lder's inequality, $\mathrm{E}_f\|\widehat{f}_n-\mathrm{E}_f\widehat{f}_n\|_\infty$ is bounded above by
\begin{align*}
\sup_{\boldsymbol{x}\in[0,1]^d}\|\boldsymbol{\psi}_{\boldsymbol{J}_n}(\boldsymbol{x})\|_1\|(\boldsymbol{\Psi}^T\boldsymbol{\Psi})^{-1}\|_{(\infty,\infty)}\mathrm{E}\|\boldsymbol{\eta}\|_\infty
\lesssim n^{-1}2^{\sum_{l=1}^dJ_{n,l}/2}\mathrm{E}\|\boldsymbol{\eta}\|_\infty,
\end{align*}
in view of the bounds established in \eqref{eq:normbound}. Let us index the rows and columns of $\boldsymbol{\Psi}^T\boldsymbol{\Psi}$ with multi-indices of the form $(\boldsymbol{j},\boldsymbol{k})$. Since $\boldsymbol{\eta}\sim\mathrm{N}(\boldsymbol{0},\sigma_0^2\boldsymbol{\Psi}^T\boldsymbol{\Psi})$, we can apply Lemma 2.3.4 of \citep{nickl2016} to conclude that for $Z_{\boldsymbol{j},\boldsymbol{k}}\sim\mathrm{N}(0,1)$ i.i.d.~with $(\boldsymbol{j},\boldsymbol{k})$ running across all the indices of the wavelet series up to resolution $\boldsymbol{J}_n$,
\begin{align*}
\mathrm{E}\|\boldsymbol{\eta}\|_\infty\leq\max_{\boldsymbol{j}\leq\boldsymbol{J}_n-\boldsymbol{1}_d,\boldsymbol{k}}\sqrt{(\boldsymbol{\Psi}^T\boldsymbol{\Psi})_{(\boldsymbol{j},\boldsymbol{k}),(\boldsymbol{j},\boldsymbol{k})}}
\mathrm{E}\left(\max_{\boldsymbol{j}\leq\boldsymbol{J}_n-\boldsymbol{1}_d,\boldsymbol{k}}|Z_{\boldsymbol{j},\boldsymbol{k}}|\right)\lesssim\sqrt{n\log{n}},
\end{align*}
where we have utilized Lemma \ref{lem:2phiphi} to upper bound the diagonals of $\boldsymbol{\Psi}^T\boldsymbol{\Psi}$, under the assumption that $2^{\sum_{l=1}^dJ_{n,l}}=o(n)$. Combine all these established bounds back into \eqref{eq:borell} to deduce \eqref{eq:type22}.
\end{proof}

\begin{lemma}\label{lem:2phiphi}
For any vector $\boldsymbol{\theta}$, we have
\begin{align*}
\|\boldsymbol{\theta}\|^2\left(n-\prod_{l=1}^d2^{J_{n,l}}\right)\lesssim n\|K_{\boldsymbol{J}_n}(f)\|_n^2=\boldsymbol{\theta}^T\boldsymbol{\Psi}^T\boldsymbol{\Psi}\boldsymbol{\theta}\lesssim \|\boldsymbol{\theta}\|^2\left(n+\prod_{l=1}^d2^{J_{n,l}}\right).
\end{align*}
In particular if $2^{\sum_{l=1}^dJ_{n,l}}=o(n)$, then the maximum eigenvalue of $\boldsymbol{\Psi}^T\boldsymbol{\Psi}$ is $O(n)$, while its minimum eigenvalue is $\gtrsim n$.
\end{lemma}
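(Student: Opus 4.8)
The plan is to compare the empirical quadratic form $\boldsymbol{\theta}^T\boldsymbol{\Psi}^T\boldsymbol{\Psi}\boldsymbol{\theta}$ --- which is exactly $n\|K_{\boldsymbol{J}_n}(f)\|_n^2$ by definition of $\|\cdot\|_n$ --- with its population counterpart $n\|K_{\boldsymbol{J}_n}(f)\|_2^2$, and to show that the two differ by at most a constant multiple of $\|\boldsymbol{\theta}\|^2\prod_{l=1}^d2^{J_{n,l}}$. Write $h:=K_{\boldsymbol{J}_n}(f)=\sum_{\boldsymbol{j},\boldsymbol{k}}\theta_{\boldsymbol{j},\boldsymbol{k}}\psi_{\boldsymbol{j},\boldsymbol{k}}$, absorbing (as in Section \ref{sec:testproof}) the father wavelets into the coarsest level $j_l=N_l-1$. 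The $L_2$-orthonormality of the CDV tensor-product system gives $\|h\|_2^2=\|\boldsymbol{\theta}\|^2$ by Parseval, and since $h$ is a finite combination of compactly supported $C^1$ functions the integrand $h^2$ is smooth enough to apply Lemma \ref{lem:discrete} with the nonnegative function $h^2$, yielding
\begin{align*}
\Big|\,n\|K_{\boldsymbol{J}_n}(f)\|_n^2-n\|\boldsymbol{\theta}\|^2\,\Big|\le C\int_{[0,1]^d}\Big|\frac{\partial^d}{\partial x_1\cdots\partial x_d}h^2(\boldsymbol{x})\Big|\,d\boldsymbol{x}.
\end{align*}
So the whole task reduces to bounding this integral by $\|\boldsymbol{\theta}\|^2\prod_{l=1}^d2^{J_{n,l}}$ up to a constant.

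First I would expand the mixed partial by the Leibniz rule: $\partial_{x_1}\cdots\partial_{x_d}(h^2)=\sum_{S\subseteq\{1,\dots,d\}}(\partial_S h)(\partial_{S^c}h)$, where $\partial_S:=\prod_{l\in S}\partial_{x_l}$ and $S^c$ is the complement. Cauchy--Schwarz then bounds the integral by $\sum_S\|\partial_S h\|_2\,\|\partial_{S^c}h\|_2$, a sum of a fixed number ($2^d$) of terms, so it is enough to prove that $\|\partial_S h\|_2\lesssim\|\boldsymbol{\theta}\|\prod_{l\in S}2^{J_{n,l}}$ for each $S$, which would give each product term $\lesssim\|\boldsymbol{\theta}\|^2\prod_{l=1}^d2^{J_{n,l}}$.

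For this last estimate I would write $\|\partial_S h\|_2^2=\sum_{(\boldsymbol{j},\boldsymbol{k}),(\boldsymbol{j}',\boldsymbol{k}')}\theta_{\boldsymbol{j},\boldsymbol{k}}\theta_{\boldsymbol{j}',\boldsymbol{k}'}\langle\partial_S\psi_{\boldsymbol{j},\boldsymbol{k}},\partial_S\psi_{\boldsymbol{j}',\boldsymbol{k}'}\rangle$ and apply Schur's test: the Gram-type matrix here is symmetric, so $\|\partial_S h\|_2^2\le\|\boldsymbol{\theta}\|^2\sup_{(\boldsymbol{j},\boldsymbol{k})}\sum_{(\boldsymbol{j}',\boldsymbol{k}')}|\langle\partial_S\psi_{\boldsymbol{j},\boldsymbol{k}},\partial_S\psi_{\boldsymbol{j}',\boldsymbol{k}'}\rangle|$. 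Because $\psi_{\boldsymbol{j},\boldsymbol{k}}$ is a tensor product the inner product factorizes over coordinates: in each direction $l\notin S$ orthonormality of the one-dimensional CDV basis forces $j_l'=j_l$, $k_l'=k_l$; in each direction $l\in S$ one has $\partial_{x_l}\psi_{j_l,k_l}=2^{3j_l/2}\psi_{k_l}'(2^{j_l}\cdot)$, hence $\|\partial_{x_l}\psi_{j_l,k_l}\|_2\lesssim2^{j_l}$ and $|\langle\partial_{x_l}\psi_{j_l,k_l},\partial_{x_l}\psi_{j_l',k_l'}\rangle|\lesssim2^{j_l+j_l'}$ by Cauchy--Schwarz, this being nonzero only when the supports of $\psi_{j_l,k_l}$ and $\psi_{j_l',k_l'}$ overlap, which for fixed $k_l$ happens for $O(1+2^{j_l'-j_l})$ values of $k_l'$. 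Summing the geometric series $\sum_{j_l'=N_l-1}^{J_{n,l}-1}2^{j_l+j_l'}(1+2^{j_l'-j_l})\lesssim2^{2J_{n,l}}$ in each of the directions $l\in S$ (and using orthonormality in the others) gives the row-sum bound $\prod_{l\in S}2^{2J_{n,l}}$, hence $\|\partial_S h\|_2\lesssim\|\boldsymbol{\theta}\|\prod_{l\in S}2^{J_{n,l}}$; the boundary-corrected wavelets, being compactly supported with uniformly bounded first derivatives, are handled in exactly the same way. Collecting the displays gives $|n\|K_{\boldsymbol{J}_n}(f)\|_n^2-n\|\boldsymbol{\theta}\|^2|\lesssim\|\boldsymbol{\theta}\|^2\prod_{l}2^{J_{n,l}}$, which is the asserted two-sided inequality; the final claim is then immediate, because $2^{\sum_l J_{n,l}}=o(n)$ makes $n\pm\prod_l2^{J_{n,l}}\asymp n$, so both extreme eigenvalues of $\boldsymbol{\Psi}^T\boldsymbol{\Psi}$ are of order $n$.

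The hard part will be the Schur-test bookkeeping in the last paragraph: one must track, across all $d$ coordinates and all dyadic scales at once --- with the coarsest ``level'' now carrying the father wavelets --- the interplay of the support-overlap multiplicities $O(1+2^{j_l'-j_l})$ with the amplitude factors $2^{j_l+j_l'}$, making sure the geometric sums telescope to $\prod_{l\in S}2^{2J_{n,l}}$ and no stray factor of $2^{J_{n,l}}$ appears. The remaining ingredients --- Parseval, the application of Lemma \ref{lem:discrete}, and the Leibniz expansion --- are routine.
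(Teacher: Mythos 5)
Your proof follows the same master plan as the paper's: identify $n\|K_{\boldsymbol{J}_n}(f)\|_n^2$ with $\boldsymbol{\theta}^T\boldsymbol{\Psi}^T\boldsymbol{\Psi}\boldsymbol{\theta}$, invoke Lemma~\ref{lem:discrete} with the integrand $h^2$, use Parseval for the leading $\|\boldsymbol{\theta}\|^2$ term, and then reduce the problem to an $L^1$ bound on $\partial_{x_1}\cdots\partial_{x_d}(h^2)$. Where you part ways is in the execution of that last bound. The paper's display applies Cauchy--Schwarz directly to $\int\bigl|\partial_{x_1}\cdots\partial_{x_d}(h^2)\bigr|\le 2\|h\|_2\,\|\partial_{x_1}\cdots\partial_{x_d}h\|_2$, which as written omits the Leibniz cross-terms for $d\ge 2$ (e.g.\ $2\,\partial_{x_1}h\cdot\partial_{x_2}h$ when $d=2$), and then cites Cai (2002) for the vaguelet norm estimate $\|\partial_{x_1}\cdots\partial_{x_d}h\|_2\lesssim\prod_l 2^{J_{n,l}}\|\boldsymbol{\theta}\|$. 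You instead expand $\partial_{x_1}\cdots\partial_{x_d}(h^2)=\sum_{S}(\partial_S h)(\partial_{S^c}h)$ over all $2^d$ subsets, bound each factor via a self-contained Schur test on the Gram matrix of $\{\partial_S\psi_{\boldsymbol{j},\boldsymbol{k}}\}$, and observe that each product term contributes the same order $\|\boldsymbol{\theta}\|^2\prod_l 2^{J_{n,l}}$. Your version is more rigorous on the Leibniz step (the paper's omission is harmless precisely because the cross-terms are of the same or lower order, which your calculation confirms, but the display itself is not correct as stated for $d\ge 2$), and is also self-contained where the paper defers to an external reference for the derivative norm bound. The Schur-test bookkeeping you flag as the hard part is carried out correctly: the support-overlap count $O(1+2^{j_l'-j_l})$ times the amplitude $2^{j_l+j_l'}$ sums geometrically to $\lesssim 2^{2J_{n,l}}$ in each active direction, giving $\|\partial_S h\|_2\lesssim\|\boldsymbol{\theta}\|\prod_{l\in S}2^{J_{n,l}}$ as required. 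Both routes yield the same two-sided inequality and the same eigenvalue conclusion.
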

\begin{proof}
By definition, $n^{-1}\boldsymbol{\theta}^T\boldsymbol{\Psi}^T\boldsymbol{\Psi}\boldsymbol{\theta}=\int_{[0,1]^d}K_{\boldsymbol{J}_n}(f)(\boldsymbol{x})^2dG_n(\boldsymbol{x})$. Let us take $f$ in Lemma \ref{lem:discrete} as $K_{\boldsymbol{J}_n}(f)$. Then $\int_{[0,1]^d}K_{\boldsymbol{J}_n}(f)(\boldsymbol{x})^2d\boldsymbol{x}=\|\boldsymbol{\theta}\|^2$ by orthonormality. By the Cauchy-Schwarz inequality,
\begin{align*}
\int_{[0,1]^d}\left|\frac{\partial^d}{\partial x_1\cdots\partial x_d}K_{\boldsymbol{J}_n}(f)(\boldsymbol{x})^2\right|d\boldsymbol{x}\leq2\|K_{\boldsymbol{J}_n}(f)\|_2\left\|\frac{\partial^d}{\partial x_1\cdots\partial x_d}K_{\boldsymbol{J}_n}(f)\right\|_2,
\end{align*}
where $\|K_{\boldsymbol{J}_n}(f)\|_2=\|\boldsymbol{\theta}\|$ again by orthonormality, while
\begin{align*}
\left\|\frac{\partial^d}{\partial x_1\cdots\partial x_d}K_{\boldsymbol{J}_n}(f)\right\|_2\lesssim\sqrt{\sum_{j_1=N_1-1}^{J_{n,1}-1}\cdots\sum_{j_d=N_d-1}^{J_{n,d}-1}\sum_{\boldsymbol{k}}2^{2\sum_{l=1}^dj_l}\theta_{\boldsymbol{j},\boldsymbol{k}}^2}
\end{align*}
is $O\left(\prod_{l=1}^d2^{J_{n,l}}\|\boldsymbol{\theta}\|\right)$ by applying the third display of Section 5 in \citep{cai2002}. The last statement follows since the maximum or minimum eigenvalue is the maximization or minimization of $\boldsymbol{\theta}^T\boldsymbol{\Psi}^T\boldsymbol{\Psi}\boldsymbol{\theta}/\|\boldsymbol{\theta}\|^2$ over $\boldsymbol{\theta}\neq\boldsymbol{0}$.
\end{proof}

\bibliographystyle{apa}
\bibliography{adaptref}
\end{document}